\documentclass{article}
\usepackage[pagewise]{lineno}

\usepackage{amsmath,amssymb,amsthm,mathrsfs}
\usepackage{hyperref,xcolor}
\usepackage{changepage}
\usepackage{enumitem} 
\usepackage{graphicx}
\usepackage{tikz}
\usepackage[utf8]{inputenc}
\usepackage[T1]{fontenc}
\usepackage{hyperref}
\usepackage{comment}
\allowdisplaybreaks

\topmargin -1cm
\textheight 21cm
\textwidth 15cm
\oddsidemargin 1cm

%\linespread{1.1}

\numberwithin{equation}{section}

\newtheorem{myDefn}{Definition}[section]

\newtheorem{myProp}[myDefn]{Proposition}

\newtheorem{myRem}[myDefn]{Remark}

\newtheorem{myExa}[myDefn]{Example}

\newtheorem{myLem}[myDefn]{Lemma}

\newtheorem{myCor}[myDefn]{Corollary}

\newtheorem{myTheorem}[myDefn]{Theorem}

%\makeatletter
%\@addtoreset{myProp}{subsubsection}
%\@addtoreset{myDefn}{subsection}
%\@addtoreset{myLem}{subsection}
%\@addtoreset{myRem}{subsection}
%\@addtoreset{myCor}{subsection}
%\@addtoreset{myTheorem}{subsection}
%\makeatother 

%%%%%%%%%%%%%%%%%%%%%%%%%%%%%%%%%%%%  
%% Raccourcis 

\DeclareMathOperator*{\argmin}{argmin}
\DeclareMathOperator*{\minn}{minimize}

\def\nn{\mathrm{n}}

\usepackage{indentfirst}

\def\R{\mathbb{R}}

\def\HH{\mathrm{H}}
\def\LL{\mathrm{L}}

\def\MD{\mathcal{D}}

%%%%%%%%%%%%%%%%%%%%%%%%%%%%%%%%%%%%  
%% Nouvelles commandes
\newcommand{\fonction}[5]{\begin{array}[t]{lrcl}#1 :&#2 &\longrightarrow &#3\\&#4& \longmapsto &#5 \end{array}}

\newcommand{\dual}[2]{\left\langle #1 , #2 \right\rangle}

\newlist{primenumerate}{enumerate}{1}
\setlist[primenumerate,1]{label={\roman*$'$}}

\title{Shape optimization for contact problem involving Signorini unilateral conditions}
\author{Aymeric Jacob de Cordemoy\footnote{Sorbonne Université, Université Paris Cité, CNRS, Laboratoire Jacques-Louis Lions, LJLL, F-75005 Paris, France.~\texttt{aymeric.jacob\_de\_cordemoy@sorbonne-universite.fr}}
}
\begin{document}

\maketitle

\begin{abstract}
    This paper investigates a shape optimization problem involving the Signorini unilateral conditions in a linear elastic model, without any penalization procedure. The shape sensitivity analysis is performed using tools from convex and variational analysis such as proximal operators and the notion of twice epi-differentiability. We prove that the solution to the Signorini problem admits a directional derivative with respect to the shape which moreover coincides with the solution to another Signorini problem. Then, the shape gradient of the corresponding energy functional is explicitly characterized which allows us to perform numerical simulations to illustrate this methodology.
\end{abstract}

\textbf{Keywords:} Shape optimization, shape sensitivity analysis, variational inequalities, contact mechanics, Signorini unilateral conditions, proximal operator, twice epi-differentiability.  

\medskip

\noindent \textbf{AMS Classification:} 49Q10, 49J40, 35J86, 74M15, 74P10.

%\tableofcontents

\section{Introduction}

\paragraph{Motivation.} On the one hand,  mechanical contact models are used to study
the deformation of solids that touch each other on parts of their boundaries. One of the mechanical setting consists in a deformable body which is in contact with a rigid
foundation without penetrating it and frictionless. From the mathematical point of view, the non-permeability conditions take the form of inequalities on the contact
surface called {\it Signorini unilateral conditions} (see, e.g.,~\cite{15SIG,16SIG}). Thus, those mechanical contact problems are usually investigated through the theory of variational inequalities, and the Signorini unilateral conditions cause nonlinearities in the corresponding variational formulations.
On the other hand, shape optimization is the mathematical field aimed at finding the optimal
shape of a given object for a given criterion, that is the shape which minimizes a certain cost
functional while satisfying given constraints. In order to numerically solve a shape optimization problem, the standard gradient descent method requires to compute the {\it shape gradient} of the cost functional.

Shape optimization problems with mechanical contact models involving Signorini unilateral conditions have been studied in the literature, and classical techniques to compute \textit{material} and \textit{shape derivatives} are based on Mignot’s theorem (see~\cite{MIGNOT}) about the \textit{conical differentiability} of projection operators on nonempty \textit{polyhedric} closed convex sets (see, e.g,~\cite{HARAUX,MAURY,SOKOZOL}). The material and shape derivatives are usually characterized with abstract variational inequalities, thus cause difficulties to compute a suitable shape gradient of the cost functional. These difficulties are usually solved in the literature using a penalization procedure (see, e.g.,~\cite{KIKUCHI}), which consists in adding a penalty functional in the optimization problem associated with the model, in order to handle the constraints due to the Signorini unilateral conditions. Hence, the optimality condition is described by a variational equality (see, e.g.,~\cite{MAUALLJOU,CHAUDET2,HINTERMULLERLAURAIN,LUFT}). However this penalization method does not take into account the exact characterization of the solution and may perturb the original nature of the model.

\paragraph{Novelties compared with existing literature.}
In this paper we investigate a shape optimization problem involving the Signorini unilateral conditions, using a new methodology based on tools from convex and variational analysis such as the notion of {\it proximal operator} introduced by J.J.\ Moreau in~1965 (see~\cite{MOR}) and the notion of {\it twice epi-differentiability} introduced by R.T.\ Rockafellar in 1985 (see~\cite{Rockafellar}). Note that we have studied the feasibility of this methodology on a shape optimization problem involving the {\it scalar Tresca friction law} (see~\cite{ABCJ}). First this new methodology allows us to recover the results obtained in~\cite{CHAUDET2},~\cite[Chapter 5 Section 5.2 p.111]{MAURY} and~\cite[Chapter 4 Section 4.6 p.205]{SOKOZOL}. Indeed, if a nonempty closed convex set is polyhedric, then from Mignot's theorem the projection operator on this set is conically differentiable, and its conical derivative coincides with the proximal operator associated with the \textit{second-order epi-derivative} of the appropriate indicator function, and thus our approach coincides with that used in the literature. Second the main novelty of the present work is that, under appropriate assumptions, our method permits to characterize the material and shape derivatives of the solution to the Signorini problem as the solutions to other Signorini problems. This point, to the best of our knowledge, has not been noticed in the literature. Furthermore, by using this new characterization, we obtain an explicit expression of the shape gradient of the corresponding energy functional. This shape gradient generalizes the one obtained in~\cite[Section 5.5]{Gilles} where the Signorini unilateral conditions are on a rectilinear boundary part. Therefore, without using any penalization procedure, the present work can be seen as a complement and an extension of the previous articles on this subject. 
\medskip

\paragraph{Description of the shape optimization problem and methodology.}

In this paragraph, we use standard notations which are recalled in Section~\ref{BVP}. Let~$d\in\{2,3\}$ which represents the dimension, $f$ be a function in $\HH^{1}(\R^{d},\R^d)$, $\Omega_{\mathrm{ref}}$ be a nonempty connected bounded open subset of~$\R^d$ with Lipschitz boundary $\Gamma_{\mathrm{ref}}:=\partial{\Omega_{\mathrm{ref}}}$, such that $\Gamma_{\mathrm{ref}}=\Gamma_{\mathrm{D}}\cup{\Gamma_{\mathrm{S}_{\mathrm{ref}}}}$, where $\Gamma_{\mathrm{D}}$ and ${\Gamma_{\mathrm{S}_{\mathrm{ref}}}}$ are two measurable pairwise disjoint subsets of $\Gamma_{\mathrm{ref}}$, and $\Gamma_{\mathrm{D}}$ has a positive measure.

In this paper we consider the shape optimization problem given by
\begin{equation}\label{shapeOptim}
    \minn\limits_{ \substack{ \Omega\in \mathcal{U}_{\mathrm{ref}} \\ \vert \Omega \vert = \vert \Omega_{\mathrm{ref}} \vert } } \; \mathcal{J}(\Omega),
\end{equation}
where
\begin{multline}\label{setadmiss}
     \mathcal{U}_{\mathrm{ref}} :=\biggl\{ \Omega\subset\R^{d} \mid \Omega \text{ nonempty connected bounded open subset of } \R^{d} \\  \text{ with Lipschitz boundary } \Gamma:=\partial{\Omega} \text{ such that } \Gamma_{\mathrm{D}}\subset\Gamma  \biggl\},
\end{multline}
 with the volume constraint $\vert \Omega \vert = \vert \Omega_{\mathrm{ref}} \vert > 0$, for all $\Omega\in\mathcal{U}_{\mathrm{ref}}$, and where $\mathcal{J} : \mathcal{U}_{\mathrm{ref}} \to \R$ is the \textit{Signorini energy functional} defined by
\begin{equation}\label{energysigno}
    \mathcal{J}(\Omega) := \frac{1}{2}\int_{\Omega} \mathrm{A}\mathrm{e}\left(u_\Omega\right):\mathrm{e}\left(u_\Omega\right)-\int_{\Omega}f\cdot u_{\Omega},
\end{equation}
where  $u_\Omega \in\HH^{1}_{\mathrm{D}}(\Omega,\R^{d})$ stands for the unique solution to the Signorini problem given by
\begin{equation}\label{Signoriniproblem2221}\tag{SP$_{\Omega}$}
\arraycolsep=2pt
\left\{
\begin{array}{rcll}
-\mathrm{div}(\mathrm{A}\mathrm{e}(u)) & = & f   & \text{ in } \Omega , \\
u & = & 0  & \text{ on } \Gamma_{\mathrm{D}} ,\\
\sigma_{\tau}(u) & = & 0  & \text{ on } \Gamma_{\mathrm{S}} ,\\
u_{\nn}\leq0, \sigma_{\nn}(u)\leq0 \text{ and } u_{\nn}\sigma_{\nn}(u) & = & 0  & \text{ on } \Gamma_{\mathrm{S}},
\end{array}
\right.
\end{equation}
where, for all~$\Omega \in \mathcal{U}_{\mathrm{ref}}$, $\Gamma:=\partial{\Omega}$,  $\Gamma_{\mathrm{S}}:=\Gamma\backslash\Gamma_{\mathrm{D}}$, and $\nn$ is the outward-pointing unit normal vector to~$\Gamma$.
In the linear elasticity model,~$\mathrm{A}$ is the stiffness tensor,~$\mathrm{e}$ is the infinitesimal strain tensor,~$\sigma_{\nn}$ is the normal stress,~$\sigma_{\tau}$ is the shear stress, and~$f$ models volume forces (see Section~\ref{BVP} for details). The normal boundary condition on~$\Gamma_{\mathrm{S}}$ is known as the Signorini
unilateral conditions which described the non-permeability of~$\Gamma_{\mathrm{S}}$ (that is~$u_\nn\leq0$), and that there are only compressive stresses exerted on~$\Gamma_{\mathrm{S}}$ (that is~$\sigma_{\nn}(u)\leq0$). Note that we focus here on minimizing the energy functional (as in~\cite{FULM,HPM, VELI}) which corresponds to maximize the compliance (see~\cite{ALL}).

For any~$\Omega \in \mathcal{U}_{\mathrm{ref}}$, the unique solution $u_{\Omega}$ {to}~\eqref{Signoriniproblem2221} satisfies
\begin{equation*}
\displaystyle\int_{\Omega}\mathrm{A}\mathrm{e}(u_\Omega):\mathrm{e}(v-u_\Omega)\geq\int_{\Omega}f\cdot (v-u_\Omega), \qquad \forall v\in\mathcal{K}^{1}(\Omega),
\end{equation*}
where $\mathcal{K}^{1}(\Omega)$ is the nonempty closed convex subset of $\HH^{1}_{\mathrm{D}}(\Omega,\R^{d})$ given by
$$
\mathcal{K}^{1}(\Omega) := \left\{v\in\HH^{1}_{\mathrm{D}}(\Omega,\R^{d}) \mid v_{\nn}\leq0 \text{ \textit{a.e.} on }\Gamma_{\mathrm{S}} \right \},
$$
and is characterized by~$u_\Omega=\mathrm{proj}_{\mathcal{K}^1(\Omega)}(F_\Omega)$, where~$F_\Omega\in\HH^{1}_{\mathrm{D}}(\Omega,\R^{d})$
is the unique solution to the Dirichlet-Neumann problem
\begin{equation*}
\arraycolsep=2pt
\left\{
\begin{array}{rcll}
-\mathrm{div}(\mathrm{A}\mathrm{e}(F)) & = & f   & \text{ in } \Omega , \\
F & = & 0  & \text{ on } \Gamma_{\mathrm{D}} ,\\
\mathrm{A}\mathrm{e}(F)\nn & = & 0  & \text{ on } \Gamma_{\mathrm{S}},
\end{array}
\right.
\end{equation*} 
and where $\mathrm{proj}_{\mathcal{K}^1(\Omega)}$ stands for the projection operator on $\mathcal{K}^1(\Omega)$. We refer for instance to~\cite{4ABC} for details on existence/uniqueness and characterization of the solution to Problem~\eqref{Signoriniproblem2221}.
In order to use our methodology, which is based, in particular, on the proximal operator, we characterize~$u_\Omega$ as (see Remark~\ref{projetprox})
$$
u_\Omega=\mathrm{prox}_{\iota_{\mathcal{K}^{1}(\Omega)}}(F_\Omega),
$$
where $\mathrm{prox}_{\iota_{\mathcal{K}^{1}(\Omega)}}$ is the proximal operator associated with the Signorini indicator function~$\iota_{\mathcal{K}^{1}(\Omega)}$, which is defined by $\iota_{\mathcal{K}^{1}(\Omega)}(v):=0$ if $v\in\mathcal{K}^{1}(\Omega)$, and~$\iota_{\mathcal{K}^{1}(\Omega)}(v):=+\infty$ otherwise. 
To deal with the numerical treatment of the above shape optimization problem, a suitable expression of the shape gradient of~$\mathcal{J}$ is required. For this purpose, we follow the classical strategy developed in the shape optimization literature (see, e.g.,~\cite{ALL,DAPO,HENROT,SOKOZOL}). Consider~$\Omega_{0} \in \mathcal{U}_{\mathrm{ref}}$ and a direction~$\theta\in \mathcal{C}_{\mathrm{D}}^{2,\infty}(\R^{d},\R^{d})$, where
\begin{equation}\label{direc12}
\mathcal{C}_{\mathrm{D}}^{2,\infty}(\R^{d},\R^{d}):=\left\{\theta\in\mathcal{C}^{2}(\R^{d},\R^{d})\cap\mathrm{W}^{2,\infty}(\R^{d},\R^{d}) \mid \theta=0 \text{ on } \Gamma_{\mathrm{D}}\right\}.
\end{equation}
Then, for any $t\geq0$ sufficiently small such that~$\mathrm{id}+t\theta$ is a~$\mathcal{C}^{2}$-diffeomorphism of $\R^{d}$, we denote by~$\Omega_{t}:=(\mathrm{id}+t\theta)(\Omega_{0}) \in \mathcal{U}_{\mathrm{ref}}$ and by~$u_{t} := u_{\Omega_t} \in\HH^{1}_{\mathrm{D}}(\Omega_{t},\R^d)$, where $\mathrm{id} :  \R^{d}\rightarrow \R^{d}$ stands for the identity operator. To get an expression of the shape gradient of~$\mathcal{J}$ at~$\Omega_0$ in the direction~$\theta$, the first step naturally consists in obtaining an expression of the derivative of the map~$t \in \R_+ \mapsto u_{t} \in\HH^{1}_{\mathrm{D}}(\Omega_{t},\R^d)$ at~$t=0$. To overcome the issue that~$u_{t}$ is defined on the moving domain~$\Omega_t$, the classical change of variables~$\mathrm{id}+t\theta$ is considered, and we prove that~$\overline{u}_{t}:=u_{t}\circ(\mathrm{id}+t\theta)\in\HH^{1}_{\mathrm{D}}(\Omega_{0},\R^d)$ is the unique solution to the parameterized variational inequality
\begin{equation*}
\displaystyle\int_{\Omega_0}\mathrm{J}_{t}\mathrm{A}\left[\nabla{\overline{u}_{t}}\left(\mathrm{I}+t\nabla{\theta}\right)^{-1}\right]:\nabla{\left(v-\overline{u}_{t}\right)}\left(\mathrm{I}+t\nabla{\theta}\right)^{-1}\\\geq\int_{\Omega_0}f_{t}\mathrm{J}_{t}\cdot\left(v-\overline{u}_t\right),
\end{equation*}
for all $v\in\mathrm{K}^{1}_{t}(\Omega_0):=\left\{v\in\HH^{1}_{\mathrm{D}}(\Omega_0,\R^d) \mid v\cdot(\mathrm{I}+t\nabla{\theta}^{\top})^{-1}\nn\leq0 \text{ \textit{a.e.} on }\Gamma_{\mathrm{S}_0} \right \}$, where $\nn$ refers now to the outward-pointing unit normal vector to $\Gamma_0$, $f_{t}:=f\circ(\mathrm{id}+t\theta)\in\HH^{1}(\R^{d},\R^d)$, $\mathrm{J}_{t}:=\mathrm{det}(\mathrm{I}+t\nabla{\theta})\in\LL^{\infty}(\R^{d},\R)$ is the Jacobian, $\nabla{\theta}$ stands for the standard Jacobian matrix of~$\theta$ and~$\mathrm{I}$ is the identity matrix of~$\R^{d \times d}$. Thus $\overline{u}_t=\mathrm{prox}_{\iota_{\mathrm{K}^{1}_{t}(\Omega_0)}}(F_t)\in\mathrm{K}^{1}_{t}(\Omega_0)$,
where $F_{t}\in\HH^{1}(\Omega_{0},\R^d)$ is the unique solution to the parameterized variational equality
\begin{equation*}
\displaystyle\int_{\Omega_0}\mathrm{J}_{t}\mathrm{A}\left[\nabla{F_t}\left(\mathrm{I}+t\nabla{\theta}\right)^{-1}\right]:\nabla{v}\left(\mathrm{I}+t\nabla{\theta}\right)^{-1}=\int_{\Omega_0}f_{t}\mathrm{J}_{t}\cdot v, \qquad \forall v\in\HH^1_{\mathrm{D}}(\Omega_0,\R^d),
\end{equation*}
and $\mathrm{prox}_{\iota_{\mathrm{K}^{1}_{t}(\Omega_0)}}$ is the proximal operator associated with the indicator function $\iota_{\mathrm{K}^{1}_{t}(\Omega_0)}$ considered on the Hilbert space $\HH^1_{\mathrm{D}}(\Omega_0,\R^d)$ endowed with a perturbed scalar product (see details in Subsection~\ref{setting}). Now, the next step is to obtain an expression of the derivative of the map~$t \in \R_+ \mapsto \overline{u}_{t} \in \HH^{1}_{\mathrm{D}}(\Omega_0,\R^d)$ at~$t=~0$, which will be denoted by~$\overline{u}'_0 \in \HH^{1}_{\mathrm{D}}(\Omega_0,\R^d)$ and called material derivative. Since, for all $t\geq0$, $\overline{u}_t \in \mathrm{K}^{1}_{t}(\Omega_0)$, the main difficulty to compute $\overline{u}'_0$ is that~$\mathrm{K}^{1}_{t}(\Omega_0)$ depends on the parameter $t\geq0$. To overcome it, we  consider, for any $t\geq0$ sufficiently small, a second change of variables $(\mathrm{I}+t\nabla{\theta})^{-1}$ and we characterize $\overline{\overline{u}}_t:=(\mathrm{I}+t\nabla{\theta})^{-1}\overline{u}_{t}$ as $\overline{\overline{u}}_t=\mathrm{prox}_{\iota_{\mathcal{K}^{1}(\Omega_0)}}(E_{t})\in\mathcal{K}^{1}(\Omega_0)$ where $E_t\in \HH^{1}_{\mathrm{D}}(\Omega_0,\R^d)$ is the unique solution to a specific variational equality (see details in Subsection~\ref{setting}). Since~$\mathcal{K}^{1}(\Omega_0)$ is independant on $t\geq0$, we can now focus on the derivative of the map~$t \in \R_+ \mapsto \overline{\overline{u}}_{t} \in \HH^{1}_{\mathrm{D}}(\Omega_0,\R^d)$, denoted by $\overline{\overline{u}}'_0\in\HH^{1}_{\mathrm{D}}(\Omega_0,\R^d)$.

To deal with the differentiability (in a generalized sense) of the proximal operator~$\mathrm{prox}_{\iota_{\mathcal{K}^{1}(\Omega_0)}} : \HH^{1}_{\mathrm{D}}(\Omega_0,\R^d) \to \HH^{1}_{\mathrm{D}}(\Omega_0,\R^d)$, we invoke the notion of twice epi-differentiability for convex functions (see~\cite{Rockafellar} or Definition~\ref{epidiff}), which leads to the \textit{protodifferentiability} of the corresponding proximal operators. Precisely, using a result proved by C. N. Do about the twice epi-differentiability of a indicator function (see~\cite[Chapter 2, Example 2.10 p.287]{DO} or Lemma~\ref{epipoly}), we characterize~$\overline{\overline{u}}'_0\in\HH^{1}_{\mathrm{D}}(\Omega_0,\R^d)$ as
$
\overline{\overline{u}}'_0=\mathrm{prox}_{\mathrm{d}_{e}^{2}\iota_{\mathcal{K}^{1}(\Omega_0)}(u_0|E_0-u_0)}(E'_0),
$
where $\mathrm{d}_{e}^{2}\iota_{\mathcal{K}^{1}(\Omega_0)}(u_0|E_0-u_0)$ is the second-order
epi-derivative of $\iota_{\mathcal{K}^{1}(\Omega_0)}$ at $u_0$ for $E_0-u_0$, and~$E'_{0} \in \HH^{1}_{\mathrm{D}}(\Omega_0,\R^d)$ is the derivative of the map~$t \in \R_+ \mapsto E_{t} \in \HH^{1}_{\mathrm{D}}(\Omega_0,\R^d)$ at $t=0$. Then one deduces that
$$
\overline{u}'_0=\mathrm{prox}_{\mathrm{d}_{e}^{2}\iota_{\mathcal{K}^{1}(\Omega_0)}(u_0|E_0-u_0)}(E'_0)+\nabla{\theta}u_0,
$$ and we characterize it as the unique solution to a variational inequality, which is next used to obtain the shape gradient of $\mathcal{J}$.

Let us emphasize that, in this paper, we do not prove theoretically the existence of a solution to the shape optimization problem~\eqref{shapeOptim}. The interested reader can find some related existence results (for very specific geometries in the two dimensional case) in~\cite{HASKLAR}.

\paragraph{Main theoretical results.}
We summarize here our main theoretical results (given in Theorems~\ref{materialderiv1} and~\ref{shapederivofJsigno1}).
However, to make their expressions more explicit and elegant, we present them
under certain additional assumptions, within the framework of Corollaries~\ref{materialderiv2},~\ref{shapederiv1} and~\ref{shapederivofJ}, making them more suitable for this introduction.

\begin{enumerate}
  \item[(i)]  Under some appropriate assumptions described in Corollary~\ref{materialderiv2}, the map
$t\in\R_{+} \mapsto \overline{u}_{t} \in \HH^{1}_{\mathrm{D}}(\Omega_0,\R^d)$ is differentiable at $t=0$, and the material derivative $\overline{u}'_{0}\in\HH^{1}_{\mathrm{D}}(\Omega_0,\R^d)$ is the unique weak solution to the Signorini problem
\begin{equation*}
\footnotesize
{\arraycolsep=2pt
\left\{
\begin{array}{rcll}
-\mathrm{div}(\mathrm{A}\mathrm{e}(\overline{u}'_0)) & = & \ell(\theta)  & \text{ in } \Omega_0 , \\
\overline{u}'_0 & = & 0  & \text{ on } \Gamma_{\mathrm{D}} ,\\
\sigma_{\tau}(\overline{u}'_0) & = & h^m(\theta)_{\tau}  & \text{ on } \Gamma_{\mathrm{S}_0} ,\\
 \sigma_{\nn}(\overline{u}'_0) & = & h^m(\theta)_{\nn}  & \text{ on } \Gamma_{\mathrm{S}^{{u_0}_{\nn}}_{0,\mathrm{N}}} ,\\
{\overline{u}'_{0}}_{\nn}& = & \left(\nabla{\theta}u_0\right)_{\nn}  & \text{ on } \Gamma_{\mathrm{S}^{{u_0}_{\nn}}_{0,\mathrm{D}}} ,\\
{\overline{u}'_{0}}_{\nn}\leq\left(\nabla{\theta}u_0\right)_{\nn}, \sigma_{\nn}(\overline{u}'_0)\leq h^m(\theta)_{\nn} \text{ and } \left( {\overline{u}'_{0}}_{\nn}-\left(\nabla{\theta}u_0\right)_{\nn}\right)\left(\sigma_{\nn}(\overline{u}'_0)-h^m(\theta)_{\nn}\right) & = & 0  & \text{ on } \Gamma_{\mathrm{S}^{{u_0}_{\nn}}_{0,\mathrm{S}}},
\end{array}
\right.}
\end{equation*}
where:
$$
\bullet~\ell(\theta)\\=-\mathrm{div}(\mathrm{A}\mathrm{e}(\nabla{u_0}\theta))\in\LL^2(\Omega_0,\R^d);
$$ 
\bigskip
$$
\bullet~h^m(\theta):=((\mathrm{A}\mathrm{e}(u_0))\nabla{\theta}^{\top}+\mathrm{A}(\nabla{u_0}\nabla{\theta})-\sigma_{\nn}(u_0)(\mathrm{div}(\theta)\mathrm{I}+\nabla{\theta}^{\top}))\nn\in\LL^2(\Gamma_{\mathrm{S}_0},\R^d);
$$ 
and $\Gamma_{\mathrm{S}_0}$ is decomposed, up to a null set, as $\Gamma_{\mathrm{S}^{{u_0}_{\nn}}_{0,\mathrm{N}}}\cup\Gamma_{\mathrm{S}^{{u_0}_{\nn}}_{0,\mathrm{D}}}\cup\Gamma_{\mathrm{S}^{{u_0}_{\nn}}_{0,\mathrm{S}}}$ (see details in Corollary~\ref{materialderiv2}).

\item[(ii)]  We deduce in Corollary~\ref{shapederiv1} that, under appropriate assumptions, the shape derivative, defined by~$u'_{0}:=\overline{u}'_0-\nabla{u_0}\theta\in\HH^{1}_{\mathrm{D}}(\Omega_0,\R^d)$ (which corresponds, roughly speaking, to the derivative of the map $t\in\R_{+} \mapsto u_{t} \in \HH^{1}_{\mathrm{D}}(\Omega_t,\R^d)$ at~$t=0$), is the unique weak solution to the Signorini problem
\begin{equation*}
\small{\arraycolsep=2pt
\left\{
\begin{array}{rcll}
-\mathrm{div}(\mathrm{A}\mathrm{e}(u'_0)) & = & 0   & \text{ in } \Omega_0 , \\
u'_0 & = & 0  & \text{ on } \Gamma_{\mathrm{D}} ,\\
\sigma_{\tau}(u'_0) & = & h^s(\theta)_{\tau}  & \text{ on } \Gamma_{\mathrm{S}_0} ,\\
 \sigma_{\nn}(u'_0) & = & h^s(\theta)_{\nn}  & \text{ on } \Gamma_{\mathrm{S}^{{u_0}_{\nn}}_{0,\mathrm{N}}} ,\\
 {u'_{0}}_{\nn}& = & W(\theta)_{\nn} & \text{ on } \Gamma_{\mathrm{S}^{{u_0}_{\nn}}_{0,\mathrm{D}}} ,\\
{u'_{0}}_{\nn}\leq W(\theta)_{\nn}, \sigma_{\nn}(u'_0)\leq h^s(\theta)_{\nn} \text{ and } \left( {u'_{0}}_{\nn}-W(\theta)_{\nn} \right)\left(\sigma_{\nn}(u'_0)-h^s(\theta)_{\nn}\right) & = & 0  & \text{ on } \Gamma_{\mathrm{S}^{{u_0}_{\nn}}_{0,\mathrm{S}}},
\end{array}
\right.}
\end{equation*}
where:
$$\bullet~W(\theta):=\left(\nabla{\theta}u_0\right)-\left(\nabla{u_0}\theta\right)\in\HH^{1/2}(\Gamma_0,\R^d);
$$ 
\begin{multline*}
\bullet~h^s(\theta):=\theta \cdot \nn\left(\partial_{\nn}\left(\mathrm{A}\mathrm{e}(u_0)\nn\right)-\partial_{\nn}\left(\mathrm{A}\mathrm{e}(u_0)\right)\nn\right)+\mathrm{A}\mathrm{e}(u_0)\nabla_{\tau}\left(\theta\cdot\nn\right)-\nabla{\left(\mathrm{A}\mathrm{e}(u_0)\nn\right)}\theta\\-\sigma_{\nn}(u_0)\left(\mathrm{div}_{\tau}(\theta)\mathrm{I}+\nabla{\theta}^{\top}\right)\nn\in\LL^2(\Gamma_{\mathrm{S}_0},\R^d).
\end{multline*}
\item[(iii)] Finally the two previous items are used to obtain Corollary~\ref{shapederivofJ} asserting that, under appropriate assumptions, the shape gradient of $\mathcal{J}$ at~$\Omega_{0}$ in the direction~$\theta$ is given by
$$ 
\mathcal{J}'(\Omega_{0})(\theta)=\int_{\Gamma_{\mathrm{S}_{0}}} \left(\theta\cdot\nn\left(\frac{\mathrm{A}\mathrm{e}(u_0):\mathrm{e}(u_0)}{2}-f\cdot u_{0}\right)+\mathrm{A}\mathrm{e}(u_0)\nn\cdot\left(\nabla{\theta}u_0-\nabla{u_0}\theta\right)\right).
$$
One can notice that $\mathcal{J}'(\Omega_0)$ depends only on $u_0$ (and not on $u’_0$), thus its expression is explicit and linear with respect to the direction $\theta$ (see Remark~\ref{remarkadjoint} for details) and allows us to exhibit a descent direction of $\mathcal{J}$. Hence, using this descent direction together with a basic Uzawa algorithm to take into account the volume constraint, we perform in Section~\ref{numericalsim} numerical simulations to solve the shape optimization problem~\eqref{shapeOptim} on a two-dimensional example.

\end{enumerate}

\paragraph{Organization of the paper.}
The paper is organized as follows. Section~\ref{PPrelimi} is the preliminary section: in Subsection~\ref{rappelconvex} some reminders on proximal operator and twice epi-differentiability are introduced and, in Subsection~\ref{BVP} , we describe the functional framework and recall some classical boundary value problems involved all along the paper. Section~\ref{mainresultoff} is
the core of the present work where the main theoretical results are stated and proved.
In Section~\ref{numericalsim}, numerical simulations are performed to solve the shape optimization problem~\eqref{shapeOptim} on a two-dimensional example. Finally Appendix~\ref{rappelgeneral} is dedicated to some
basic recalls from capacity theory and differential geometry.

\section{Preliminaries}\label{PPrelimi}

In this section we start in Subsection~\ref{rappelconvex} with some notions from convex and variational analysis such as the proximal operator and the twice epi-differentiability which are essential in this paper. In Subsection~\ref{BVP} the functional framework is presented and we recall some classical boundary value problems used throughout the paper.
 
\subsection{Reminders on proximal operator and twice epi-differentiability}\label{rappelconvex}
For notions and results presented in this section, we refer to standard references such as~\cite{BREZ2,MINTY,ROCK2} and~\cite[Chapter~12]{ROCK}. In the sequel $(\mathcal{H}, \dual{\cdot}{\cdot}_{\mathcal{H}})$ stands for a general real Hilbert space.

\begin{myDefn}[Domain and epigraph]
Let  $\phi \,: \, \mathcal{H}\rightarrow \mathbb{R}\cup\left\{\pm \infty \right\}$.
The domain and the epigraph of~$\phi$ are respectively defined by
$$
\mathrm{dom}\left(\phi\right):=\left\{x\in \mathcal{H} \mid \phi(x)<+\infty \right\} \quad \text{and} \quad
\mathrm{epi}\left(\phi\right):=\left\{(x,t)\in \mathcal{H}\times\mathbb{R}\mid \phi(x)\leq t\right\}.
$$
\end{myDefn}
Recall that $\phi \,: \, \mathcal{H}\rightarrow \mathbb{R}\cup\left\{\pm \infty \right\}$ is said to be \textit{proper} if $\mathrm{dom}(\phi)\neq \emptyset$ and $\phi(x)>-\infty$ 
for all~$x\in\mathcal{H}$. Moreover, $\phi$ is a convex (resp.\ lower semi-continuous) function on $\mathcal{H}$ if and only if~$\mathrm{epi}(\phi)$ is a convex (resp.\ closed) subset of~$\mathcal{H}\times\R$.

\begin{myDefn}[Convex subdifferential operator]
 Let $\phi  :  \mathcal{H} \rightarrow \R\cup\left\{+\infty\right\}$ be a proper function. We denote by $\partial{\phi}  :  \mathcal{H} \rightrightarrows \mathcal{H}$ the \textit{convex subdifferential operator} of $\phi$, defined by 
 $$
 \partial{\phi}(x):=\left\{y\in\mathcal{H} \mid \forall z\in\mathcal{H}\text{, } \dual{y}{z-x}_{\mathcal{H}}\leq \phi(z)-\phi(x)\right\},
 $$
for all $x\in\mathcal{H}$.
\end{myDefn}

\begin{myDefn}[Proximal operator]\label{proxi}
 Let $\phi  :  \mathcal{H} \rightarrow \R\cup\left\{+\infty\right\}$ be a proper lower semi-continuous convex function. The \textit{proximal operator} associated with $\phi$ is the map~$\mathrm{prox}_{\phi}  :  \mathcal{H} \rightarrow \mathcal{H}$ defined by
 $$
      \mathrm{prox}_{\phi}(x):=\underset{y\in \mathcal{H}}{\argmin}\left[ \phi(y)+\frac{1}{2}\left \| y-x \right \|^{2}_{\mathcal{H}}\right]=(\mathrm{I}+\partial \phi)^{-1}(x),
 $$
for all $x\in\mathcal{H}$, where $\mathrm{I}  :  \mathcal{H}\rightarrow \mathcal{H}$ stands for the identity operator.
\end{myDefn}

It is well-known that, if $\phi  :  \mathcal{H} \rightarrow \R\cup\left\{+\infty\right\}$ is a proper lower semi-continuous convex function, then~$\partial{\phi}$ is a maximal monotone operator (see, e.g.,~\cite{ROCK2}), and thus the proximal operator~$\mathrm{prox}_{\phi}$ is well-defined, single-valued and nonexpansive, i.e. Lipschitz continuous with modulus $1$ (see, e.g.,~\cite[Chapter II]{BREZ2}).

\begin{myRem}\label{projetprox}\normalfont
    Note that, if $\phi:=\iota_{\mathrm{C}}$, where $\iota_{\mathrm{C}}$ is the indicator function of a nonempty closed convex subset ${\mathrm{C}}\subset\mathcal{H}$ , then $\iota_{\mathrm{C}}$ is a proper lower semi-continuous convex function and
    $$
\mathrm{prox}_{\iota_{\mathrm{C}}}=\mathrm{proj}_{\mathrm{C}},
    $$
    where $\mathrm{proj}_{\mathrm{C}}$ is the projection operator on $\mathrm{C}$.
\end{myRem}

\medskip

In the introduction, since we expressed the unique solution to the Signorini problem using the proximal operator, then the shape sensitivity analysis of this problem is related to the differentiability (in a particular sense) of the involved proximal operator. This issue is investigated using the notion of twice epi-differentiability~(see~\cite{Rockafellar}) defined as the Mosco epi-convergence of second-order difference quotient functions. In what follows we provide reminders and backgrounds on these notions and, for more details, we refer to~\cite[Chapter 7, Section B p.240]{ROCK} for the finite-dimensional case and to~\cite{DO} for the infinite-dimensional case. The strong (resp.\ weak) convergence of a sequence in~$\mathcal{H}$ will be denoted by~$\rightarrow$ (resp.\ $\rightharpoonup$) and note that all limits with respect to~$t$ will be considered for~$t \to 0^+$.
\begin{myDefn}[Mosco-convergence]
The \textit{outer}, \textit{weak-outer}, \textit{inner} and \textit{weak-inner limits} of a parameterized family~$(A_{t})_{t>0}$ of subsets of $\mathcal{H}$ are respectively defined by
\begin{eqnarray*}
      \mathrm{lim}\sup A_{t}&:=&\left\{ x\in \mathcal{H} \mid \exists (t_{n})_{n\in\mathbb{N}}\rightarrow 0^{+}, \exists \left(x_{n}\right)_{n\in\mathbb{N}}\rightarrow x, \forall n\in\mathbb{N}, x_{n}\in A_{t_{n}}\right\},\\
     \mathrm{w}\text{-}\mathrm{lim}\sup A_{t}&:=&\left\{ x\in \mathcal{H} \mid \exists (t_{n})_{n\in\mathbb{N}}\rightarrow 0^{+}, \exists \left(x_{n}\right)_{n\in\mathbb{N}}\rightharpoonup x, \forall n\in\mathbb{N}, x_{n}\in A_{t_{n}}\right\},\\
     \mathrm{lim}\inf A_{t}&:=&\left\{ x\in \mathcal{H} \mid \forall (t_{n})_{n\in\mathbb{N}}\rightarrow 0^{+}, \exists \left(x_{n}\right)_{n\in\mathbb{N}}\rightarrow x, \exists N\in\mathbb{N}, \forall n\geq N, x_{n}\in A_{t_{n}}\right\},\\
     \mathrm{w}\text{-}\mathrm{lim}\inf A_{t}&:=&\left\{ x\in \mathcal{H} \mid \forall (t_{n})_{n\in\mathbb{N}}\rightarrow 0^{+}, \exists \left(x_{n}\right)_{n\in\mathbb{N}}\rightharpoonup x, \exists N\in\mathbb{N}, \forall n\geq N, x_{n}\in A_{t_{n}}\right\}.
\end{eqnarray*}
The family~$(A_{t})_{t>0}$ is said to be \textit{Mosco-convergent} if~$
\mathrm{w}\text{-}\mathrm{lim}\sup A_{t}\subset\mathrm{lim}\inf A_{t}
$. In that case, all the previous limits are equal and we write
$$
     \mathrm{M}\text{-}\mathrm{lim} A_{t}:=\mathrm{lim}\inf A_{t}=\mathrm{lim}\sup A_{t}=\mathrm{w}\text{-}\mathrm{lim}\inf A_{t}=\mathrm{w}\text{-}\mathrm{lim}\sup A_{t}.
$$
\end{myDefn}
\begin{myDefn}[Mosco epi-convergence]
  Let $(\phi_{t})_{t>0}$ be a parameterized family of functions~$\phi_{t}  : \mathcal{H}\rightarrow \mathbb{R}\cup\left\{\pm \infty \right\}$ for all $t>0$.
 We say that $(\phi_{t})_{t>0}$ is \textit{Mosco epi-convergent} if~$(\mathrm{epi}(\phi_{t}))_{t>0}$ is Mosco-convergent in~$\mathcal{H} \times \R$. Then we denote by $\mathrm{ME}\text{-}\mathrm{lim}~ \phi_{t}  :  \mathcal{H}\rightarrow \mathbb{R}\cup\left\{\pm \infty \right\}$ the function characterized by its epigraph~$\mathrm{epi}\left(\mathrm{ME}\text{-}\mathrm{lim}~\phi_{t}\right):=\mathrm{M}\text{-}\mathrm{lim}$ $\displaystyle \mathrm{epi}\left(\phi_{t}\right)$ and we say that $(\phi_{t})_{t>0}$ Mosco epi-converges to~$\mathrm{ME}\text{-}\mathrm{lim}~\phi_{t}$.
 \end{myDefn}
 
Now let us recall the notion of twice epi-differentiability introduced by R.T.~Rockafellar in~1985 (see~\cite{Rockafellar}) that generalizes the classical notion of second-order derivative to nonsmooth convex functions.
 \begin{myDefn}[Twice epi-differentiability]\label{epidiff}
   A proper lower semi-continuous convex function~$\phi  : \mathcal{H}\rightarrow \mathbb{R}\cup\left\{+\infty \right\}$ is said to be \textit{twice epi-differentiable} at $x\in\mathrm{dom}(\phi)$ for $y\in\partial\phi(x)$ if the family of second-order difference quotient functions $(\delta_{t}^{2}\phi(x|y))_{t>0}$ defined by
$$
  \fonction{ \delta_{t}^{2}\phi(x|y) }{\mathcal{H}}{\mathbb{R}\cup\left\{+\infty\right\}}{z}{\displaystyle\frac{\phi(x+t z)-\phi(x)-t\dual{ y}{z}_{\mathcal{H}}}{\frac{1}{2}t^{2}},}
$$
for all $t>0$, is Mosco epi-convergent. In that case we denote by
$$
\mathrm{d}_{e}^{2}\phi(x|y):=\mathrm{ME}\text{-}\mathrm{lim}~\delta_{t}^{2}\phi(x|y),
$$
which is called the second-order epi-derivative of $\phi$ at $x$ for $y$.
\end{myDefn}

In this paper we have to deal with the twice epi-differentiability of the Signorini indicator functional. For this purpose, we use a result, due to C.N. Do. (see~\cite[Chapter 2, Example 2.10 p.287]{DO}), which shows that the indicator function of a nonempty closed convex set is twice epi-differentiable under an appropriate assumption. Before to introduce this result, let us recall some classical definitions from convex analysis.

\begin{myDefn}[Normal cone]\label{conenormall}
Let $\mathrm{C}$ be a nonempty closed convex subset of $\mathcal{H}$ and $x\in\mathrm{C}$. The \textit{normal cone} to $\mathrm{C}$ at $x$ is the nonempty closed convex cone of $\mathcal{H}$ defined by
$$
\mathrm{N}_{\mathrm{C}}(x):=\left\{z\in\mathcal{H}\mid \dual{z}{c-x}_{\mathcal{H}}\leq0, \forall c\in\mathrm{C}\right\}.
$$
\end{myDefn}

\begin{myExa}\label{ExamNormal}
    Let $\mathrm{C}$ be a nonempty closed convex subset of $\mathcal{H}$ and $\iota_{\mathrm{C}}$ be the indicator function of~$\mathrm{C}$. Then, for all $x\in\mathrm{C}$,
    $$
    \partial{\iota}_{\mathrm{C}} (x)=\mathrm{N}_{\mathrm{C}}(x).
    $$
\end{myExa}

\begin{myDefn}[Tangent cone]\label{tangentcone}
Let $\mathrm{C}$ be a nonempty closed convex subset of $\mathcal{H}$ and $x\in\mathrm{C}$. The tangent cone to $\mathrm{C}$ at $x$ is the nonempty closed convex cone of $\mathcal{H}$ defined by
$$
\mathrm{T}_{\mathrm{C}}(x):=\overline{\left\{z\in\mathcal{H}\mid \exists \lambda>0, x+\lambda z\in\mathrm{C}\right\}}.
$$
\end{myDefn}

\begin{myDefn}[Polyhedric set]\label{polyset}
    Let $\mathrm{C}$ be a nonempty closed convex subset of $\mathcal{H}$. We say that~$\mathrm{C}$ is polyhedric at $x\in\mathrm{C}$ for $y\in\mathrm{N}_{\mathrm{C}}(x)$ if 
    $$
    \mathrm{T}_{\mathrm{C}}(x)\cap\left(\R y\right)^{\perp}=\overline{\left\{z\in\mathcal{H}\mid \exists \lambda>0, x+\lambda z\in\mathrm{C}\right\}\cap\left(\R y\right)^{\perp}}.
    $$
\end{myDefn}

\begin{myRem}\normalfont
    Recall that, in finite dimension, polyhedric sets reduce to polyhedral sets, which is the intersection of a finite set of closed half-spaces (see, e.g.,~\cite{MANA}).
\end{myRem}

\begin{myLem}\label{epipoly}
Let $\mathrm{C}$ be a nonempty closed convex subset of $\mathcal{H}$ and $\iota_{\mathrm{C}}$ be the indicator function of~$\mathrm{C}$. If $\mathrm{C}$ is polyhedric at $x\in\mathrm{C}$ for~$y\in\mathrm{N}_{\mathrm{C}}(x)$, then $\iota_\mathrm{C}$ is twice epi-differentiable at $x$ for $y$ and
$$
\mathrm{d}_{e}^{2}\iota_\mathrm{C}(x|y)=\mathrm{\iota}_{\mathrm{T}_{\mathrm{C}}(x)\cap\left(\R y\right)^{\perp}}.
$$
\end{myLem}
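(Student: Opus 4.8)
The plan is to verify directly that the family of second-order difference quotients $(\delta_{t}^{2}\iota_{\mathrm{C}}(x|y))_{t>0}$ from Definition~\ref{epidiff} Mosco epi-converges to $\iota_{\mathrm{T}_{\mathrm{C}}(x)\cap(\R y)^{\perp}}$. First I would make the quotient explicit. Since $x\in\mathrm{C}$ gives $\iota_{\mathrm{C}}(x)=0$, for every $z\in\mathcal{H}$ and $t>0$ one has $\delta_{t}^{2}\iota_{\mathrm{C}}(x|y)(z)=-\frac{2}{t}\dual{y}{z}_{\mathcal{H}}$ whenever $x+tz\in\mathrm{C}$, and $+\infty$ otherwise; in particular the effective domain of each quotient is $\frac{1}{t}(\mathrm{C}-x)$. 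Two elementary facts drive the whole argument: because $y\in\mathrm{N}_{\mathrm{C}}(x)=\partial\iota_{\mathrm{C}}(x)$ (Example~\ref{ExamNormal}), any $z$ with $x+tz\in\mathrm{C}$ satisfies $\dual{y}{z}_{\mathcal{H}}\leq0$, so $\delta_{t}^{2}\iota_{\mathrm{C}}(x|y)\geq0$ everywhere; and since $x+tz\in\mathrm{C}$ with $t>0$ forces $z$ into the radial cone, the domain of each quotient is contained in the weakly closed tangent cone $\mathrm{T}_{\mathrm{C}}(x)$. Writing $K:=\mathrm{T}_{\mathrm{C}}(x)\cap(\R y)^{\perp}$, it then suffices to prove the two sequential Mosco conditions, which together sandwich $\mathrm{epi}(\iota_{K})$ between $\mathrm{w}\text{-}\mathrm{lim}\sup\mathrm{epi}(\delta_{t}^{2}\iota_{\mathrm{C}}(x|y))$ and $\mathrm{lim}\inf\mathrm{epi}(\delta_{t}^{2}\iota_{\mathrm{C}}(x|y))$.

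For the lower bound I would fix $t_{n}\to0^{+}$ and $z_{n}\rightharpoonup z$ and show $\liminf_{n}\delta_{t_{n}}^{2}\iota_{\mathrm{C}}(x|y)(z_{n})\geq\iota_{K}(z)$. If $z\in K$ this is immediate from nonnegativity. If $z\notin K$, then either $z\notin\mathrm{T}_{\mathrm{C}}(x)$ or $\dual{y}{z}_{\mathcal{H}}\neq0$. In the first case weak closedness of $\mathrm{T}_{\mathrm{C}}(x)$ forces all but finitely many $z_{n}$ out of the domain, so the quotients are eventually $+\infty$. In the second case, the in-domain terms satisfy $\dual{y}{z_{n}}_{\mathcal{H}}\leq0$ and converge to $\dual{y}{z}_{\mathcal{H}}$, hence $\dual{y}{z}_{\mathcal{H}}<0$ and $-\frac{2}{t_{n}}\dual{y}{z_{n}}_{\mathcal{H}}\to+\infty$; either way the liminf is $+\infty$. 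This yields $\mathrm{w}\text{-}\mathrm{lim}\sup\mathrm{epi}(\delta_{t}^{2}\iota_{\mathrm{C}}(x|y))\subset\mathrm{epi}(\iota_{K})$.

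The delicate half is the recovery condition, $\mathrm{epi}(\iota_{K})\subset\mathrm{lim}\inf\mathrm{epi}(\delta_{t}^{2}\iota_{\mathrm{C}}(x|y))$, and this is exactly where polyhedricity is indispensable. Given $z\in K$ and any $t_{n}\to0^{+}$, I must build $z_{n}\to z$ strongly with $\delta_{t_{n}}^{2}\iota_{\mathrm{C}}(x|y)(z_{n})\to0$. By Definition~\ref{polyset}, polyhedricity of $\mathrm{C}$ at $x$ for $y$ gives $K=\overline{\{w\mid \exists\lambda>0,\ x+\lambda w\in\mathrm{C}\}\cap(\R y)^{\perp}}$, so there is a sequence $w_{k}\to z$ with each $w_{k}$ radial and $\dual{y}{w_{k}}_{\mathcal{H}}=0$. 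For a fixed such $w_{k}$, choosing $\lambda_{k}>0$ with $x+\lambda_{k}w_{k}\in\mathrm{C}$ and using convexity of $\mathrm{C}$ with $x\in\mathrm{C}$, one gets $x+tw_{k}\in\mathrm{C}$ for all $0\leq t\leq\lambda_{k}$; hence for $n$ large enough $\delta_{t_{n}}^{2}\iota_{\mathrm{C}}(x|y)(w_{k})=-\frac{2}{t_{n}}\dual{y}{w_{k}}_{\mathcal{H}}=0$. A diagonal extraction $z_{n}:=w_{k(n)}$ with $k(n)\to\infty$ slowly then produces a strongly convergent recovery sequence along which the quotients vanish for large $n$. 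I expect this step to be the main obstacle: the two requirements that the approximating directions be \emph{feasible} (radial) and simultaneously \emph{annihilate} $y$ are precisely what polyhedricity guarantees; dropping $\dual{y}{w_{k}}_{\mathcal{H}}=0$ would let $-\frac{2}{t_{n}}\dual{y}{w_{k}}_{\mathcal{H}}$ blow up and destroy the recovery.

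Combining the two inclusions squeezes all four set-limits of $\mathrm{epi}(\delta_{t}^{2}\iota_{\mathrm{C}}(x|y))$ onto $\mathrm{epi}(\iota_{K})$, so the family is Mosco epi-convergent with $\mathrm{ME}\text{-}\mathrm{lim}\,\delta_{t}^{2}\iota_{\mathrm{C}}(x|y)=\iota_{K}$. By Definition~\ref{epidiff} this is exactly twice epi-differentiability of $\iota_{\mathrm{C}}$ at $x$ for $y$ together with the claimed identity $\mathrm{d}_{e}^{2}\iota_{\mathrm{C}}(x|y)=\iota_{\mathrm{T}_{\mathrm{C}}(x)\cap(\R y)^{\perp}}$.
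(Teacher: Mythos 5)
Your proof is correct, but it cannot be compared line-by-line with anything in the paper, because the paper does not prove Lemma~\ref{epipoly} at all: it is quoted as a known result of C.~N.~Do (cited as Chapter~2, Example~2.10 of~\cite{DO}) and used as a black box. What you have done is reconstruct, in a self-contained way, essentially the argument behind that citation: the explicit form $\delta_{t}^{2}\iota_{\mathrm{C}}(x|y)(z)=-\tfrac{2}{t}\dual{y}{z}_{\mathcal{H}}$ on $\tfrac{1}{t}(\mathrm{C}-x)$ and $+\infty$ elsewhere, nonnegativity of the quotients from $y\in\mathrm{N}_{\mathrm{C}}(x)$, the inclusion of the domains in the weakly closed cone $\mathrm{T}_{\mathrm{C}}(x)$, the resulting weak-outer-limit inclusion $\mathrm{w}\text{-}\mathrm{lim}\sup\mathrm{epi}(\delta_{t}^{2}\iota_{\mathrm{C}}(x|y))\subset\mathrm{epi}(\iota_{K})$, and the recovery-sequence construction in which polyhedricity supplies directions that are simultaneously radial and orthogonal to $y$, followed by a diagonal extraction. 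All of these steps are sound; in particular your case analysis in the lower bound (either almost all $z_{n}$ leave the domain, or $\dual{y}{z_{n}}_{\mathcal{H}}\leq 0$ forces $\dual{y}{z}_{\mathcal{H}}<0$ and blow-up of $-\tfrac{2}{t_{n}}\dual{y}{z_{n}}_{\mathcal{H}}$) is exactly where both hypotheses $y\in\mathrm{N}_{\mathrm{C}}(x)$ and closed convexity of $\mathrm{T}_{\mathrm{C}}(x)$ are consumed, and your observation that the recovery step is the sole place where polyhedricity is indispensable is the right diagnosis. The only cosmetic gap is that you build recovery sequences for points $z\in K$ rather than for general epigraph points $(z,a)$ with $a\geq 0$; since the quotients along your sequence are eventually $0\leq a$, taking the constant second coordinate $a$ finishes this, and it is worth one sentence. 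The trade-off between your route and the paper's is the usual one: the citation keeps the paper short and defers to the original source, while your argument makes the key convex-analytic ingredient of Section~\ref{sectionepidiff} verifiable without consulting~\cite{DO}.
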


Let us conclude this section with a last proposition (see, e.g.,~\cite{ROCKAGENE,ROCK} for the finite-dimensional
case and~\cite{8AB,DO} for the infinite-dimensional one). We bring to the attention of the reader that Proposition~\ref{TheoABC2018} is the key point in order to derive our main results.

\begin{myProp}\label{TheoABC2018}
Let~$\Phi : \mathcal{H}\rightarrow \mathbb{R} \cup \left\{+\infty\right\}$ be a proper lower semi-continuous convex function on~$\mathcal{H}$. Let $F  :  \mathbb{R}^{+}\rightarrow \mathcal{H}$ and let~$u  :  \mathbb{R}^{+}\rightarrow \mathcal{H}$ be defined by
$$
    u(t):=\mathrm{prox}_{\Phi}(F(t)),
$$
for all~$t\geq 0$. If the conditions 
\begin{enumerate}[label=\arabic*)]
    \item $F$ is differentiable at $t=0$;
    \item $\Phi$ is twice epi-differentiable at $u(0)$ for $F(0)-u(0)\in\partial \Phi(u(0))$;
\end{enumerate}
are both satisfied, then $u$ is differentiable at $t=0$ with
$$
u'(0)=\mathrm{prox}_{\mathrm{d}_{e}^{2}\Phi(u(0)|F(0)-u(0))}(F'(0)).
$$
\end{myProp}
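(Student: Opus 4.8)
The plan is to characterize $u$ through the resolvent identity hidden in the proximal operator and to reduce the differentiability of $u$ at $0$ to the Mosco epi-convergence provided by the second hypothesis. Set $x_{0}:=u(0)$ and $y_{0}:=F(0)-x_{0}$. Since $\mathrm{prox}_{\Phi}=(\mathrm{I}+\partial\Phi)^{-1}$, we have $y_{0}\in\partial\Phi(x_{0})$ (so that the second hypothesis is meaningful) and, more generally,
$$
F(t)-u(t)\in\partial\Phi(u(t)),\qquad t\geq0 .
$$
Introducing the normalized difference quotient $w_{t}:=\dfrac{u(t)-x_{0}}{t}$, the goal is to write $w_{t}$ as a proximal value of an explicit function and then to let $t\to0^{+}$.

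The key step is an algebraic identity converting the a priori intractable difference quotient of the nonlinear resolvent $t\mapsto\mathrm{prox}_{\Phi}(F(t))$ into a proximal value of the second-order difference quotient $\delta_{t}^{2}\Phi(x_{0}|y_{0})$, which is itself a proper lower semi-continuous convex function (nonnegative and vanishing at $0$, since $y_{0}\in\partial\Phi(x_{0})$). As $z\mapsto\delta_{t}^{2}\Phi(x_{0}|y_{0})(z)$ is an affine reparametrization of $\Phi$ corrected by an affine term and divided by $\tfrac12 t^{2}$, the subdifferential chain rule gives
$$
\partial\bigl[\delta_{t}^{2}\Phi(x_{0}|y_{0})\bigr](z)=\frac{2}{t}\bigl(\partial\Phi(x_{0}+tz)-y_{0}\bigr).
$$
Evaluating at $z=w_{t}$, using $F(t)-u(t)\in\partial\Phi(x_{0}+tw_{t})$ together with the elementary identity $F(t)-u(t)-y_{0}=(F(t)-F(0))-tw_{t}$, one checks that the optimality condition characterizing the proximal operator is satisfied, namely
$$
w_{t}=\mathrm{prox}_{\frac12\delta_{t}^{2}\Phi(x_{0}|y_{0})}\Bigl(\tfrac{F(t)-F(0)}{t}\Bigr),
$$
the factor $\tfrac12$ being exactly the normalization that cancels the dependence of the proximal argument on $w_{t}$.

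It remains to pass to the limit. By the differentiability hypothesis on $F$, the argument converges, $\tfrac{F(t)-F(0)}{t}\to F'(0)$. By the twice epi-differentiability hypothesis, $(\delta_{t}^{2}\Phi(x_{0}|y_{0}))_{t>0}$ Mosco epi-converges to $\mathrm{d}_{e}^{2}\Phi(x_{0}|y_{0})$. Attouch's theorem identifies the Mosco epi-convergence of proper lower semi-continuous convex functions with the pointwise convergence of the associated proximal operators $\mathrm{prox}_{\lambda(\cdot)}$, for every parameter $\lambda>0$ (see, e.g.,~\cite{BREZ2,DO}); applied with $\lambda=\tfrac12$ it yields, for each fixed $v\in\mathcal{H}$,
$$
\mathrm{prox}_{\frac12\delta_{t}^{2}\Phi(x_{0}|y_{0})}(v)\longrightarrow\mathrm{prox}_{\frac12\mathrm{d}_{e}^{2}\Phi(x_{0}|y_{0})}(v).
$$
The simultaneously moving argument is absorbed by the nonexpansiveness of the proximal operator, since
$$
\norm{\mathrm{prox}_{\frac12\delta_{t}^{2}\Phi}\bigl(\tfrac{F(t)-F(0)}{t}\bigr)-\mathrm{prox}_{\frac12\delta_{t}^{2}\Phi}\bigl(F'(0)\bigr)}_{\mathcal{H}}\leq\norm{\tfrac{F(t)-F(0)}{t}-F'(0)}_{\mathcal{H}}\longrightarrow0 ,
$$
so that a triangle inequality gives $w_{t}\to\mathrm{prox}_{\frac12\mathrm{d}_{e}^{2}\Phi(x_{0}|y_{0})}(F'(0))$. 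Hence $u$ is differentiable at $t=0$ with this limit as derivative; in the applications of this paper $\Phi$ is an indicator function, so by Lemma~\ref{epipoly} its second-order epi-derivative is the indicator of a closed convex cone, on which the proximal operator is the metric projection and is insensitive to the positive factor $\tfrac12$, which is the form stated above.

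The step I expect to be the main obstacle is the algebraic identity of the second paragraph: making the subdifferential chain rule rigorous under the affine reparametrization — so that the resolvent inclusion for $u(t)$ translates \emph{exactly} into a subgradient of $\delta_{t}^{2}\Phi(x_{0}|y_{0})$ — and fixing the normalization that makes the proximal argument equal to $\tfrac{F(t)-F(0)}{t}$, free of $w_{t}$. The only genuinely analytic point afterwards, namely upgrading the pointwise convergence of the proximal operators to convergence along a varying argument, is precisely where the $1$-Lipschitz (nonexpansive) character of $\mathrm{prox}$ is indispensable.
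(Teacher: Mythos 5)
Your proof is correct. Note that the paper itself does not prove Proposition~\ref{TheoABC2018}; it only quotes it from the literature (\cite{8AB,DO} for the infinite-dimensional case), so the relevant comparison is with those references, and your argument is precisely the standard one there: the resolvent inclusion $F(t)-u(t)\in\partial\Phi(u(t))$, the exact rewriting of the difference quotient $w_t:=(u(t)-u(0))/t$ as a proximal value of the second-order difference quotient evaluated at $(F(t)-F(0))/t$, and a limit passage combining the easy direction of Attouch's theorem (Mosco epi-convergence implies pointwise convergence of $\mathrm{prox}_{\lambda(\cdot)}$ for each $\lambda>0$) with the nonexpansiveness of proximal maps to absorb the moving argument. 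The step you flagged as the main obstacle is in fact harmless: the subdifferential calculus you need is exact, since it only involves precomposition with the invertible affine map $z\mapsto x_0+tz$, addition of a continuous linear functional, and positive scaling, none of which requires any qualification condition.

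The substantive point is the factor $\tfrac12$, and there your bookkeeping is right and exposes a normalization mismatch in the statement itself. With the paper's Definition~\ref{epidiff} (difference quotients normalized by $\tfrac12 t^2$, Rockafellar's convention), what is true in general is exactly what you proved, namely $u'(0)=\mathrm{prox}_{\frac12\mathrm{d}_e^2\Phi(u(0)|F(0)-u(0))}(F'(0))$, and not the displayed formula. A one-dimensional quadratic shows the two differ: for $\mathcal{H}=\R$, $\Phi(z)=\tfrac{a}{2}z^2$ with $a>0$, and $F(t)=t$, one computes $u'(0)=(1+a)^{-1}$, while $\mathrm{d}_e^2\Phi(0|0)(z)=az^2$ gives $\mathrm{prox}_{\mathrm{d}_e^2\Phi(0|0)}(1)=(1+2a)^{-1}$. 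The clean formula in the statement is the one associated with the normalization by $t^2$ (without the $\tfrac12$), which is the convention of the cited source; the paper mixes the two conventions. Your closing remark is the correct resolution for everything downstream in the paper: whenever $\Phi$ is an indicator function --- the only case actually used, via Lemma~\ref{epipoly} --- one has $\tfrac12\mathrm{d}_e^2\Phi=\mathrm{d}_e^2\Phi$, because indicator functions are invariant under multiplication by positive scalars, so Theorem~\ref{materialderiv1} and its corollaries are unaffected.
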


\subsection{Functional framework and some required boundary value problems}\label{BVP}

The major part of the present work consists in performing the sensitivity analysis of a Signorini problem with respect to shape perturbation. For this purpose, we present in this section the functional framework and some boundary value problems: a Dirichlet-Neumann problem and a Signorini problem. We present some basic notions and results concerning these boundary value problems for the reader's convenience. Since the proofs are very similar to the ones detailed in the paper~\cite{4ABC}, they will be omitted here.

Let $d\in\left\{2,3\right\}$ and $\Omega$ be a nonempty bounded connected open subset of~$\R^{d}$ with a Lipschitz-boundary $\Gamma :=\partial{\Omega}$ and $\nn$ the outward-pointing unit normal vector to $\Gamma$. In the whole paper we denote by $\MD(\Omega,\R^d)$ the set of functions that are infinitely differentiable with
compact support in~$\Omega$, by $\MD'(\Omega,\R^d)$ the set of distributions on $\Omega$, for $(m,p) \in \mathbb{N}\times\mathbb{N}^*$, by~$\mathrm{W}^{m,p}(\Omega,\R^d)$, $\LL^{2}(\Gamma,\R^d)$, $\HH^{1/2}(\Gamma,\R^d)$, $\HH^{-1/2}(\Gamma,\R^d)$, the usual Lebesgue and Sobolev spaces endowed with their standard norms, and we denote by $\HH^{m}(\Omega,\R^d):=\mathrm{W}^{m,2}(\Omega,\R^d)$ and by~$\HH_{\mathrm{div}}(\Omega, \R^{d\times d}):= \{ w\in \mathrm{L}^{2}(\Omega,\R^{d\times d})  \mid \mathrm{div} (w)\in \mathrm{L}^{2}(\Omega,\R^d) \}$, where $\mathrm{div}(w)$ is the vector whose the $i$-th component is defined by $\mathrm{div}(w)_i:=\mathrm{div}(w_{i})\in\LL^2(\Omega,\R)$, and where $w_i\in\mathrm{L}^{2}(\Omega,\R^{d})$ is the transpose of the~$i$-th line of~$w$, for all $i\in[[1,d]]$ and for all~$w\in\HH_{\mathrm{div}}(\Omega, \R^{d\times d})$. Moreover, for any $w\in \HH_{\mathrm{div}}(\Omega, \R^{d\times d})$, we denote by~$w\nn$ the function in $\HH^{-1/2}(\Gamma,\R^d)$ given by the divergence formula (see Proposition~\ref{div}). All along this paper we denote by~$:$  the scalar product defined by $\mathrm{B}:\mathrm{C}=\sum_{i=1}^{d}\mathrm{B}_i\cdot\mathrm{C}_i$, for all $\mathrm{B},\mathrm{C}\in\R^{d\times d}$, where $\mathrm{B}_i\in\R^d$ (resp. $\mathrm{C}_i\in\R^d$) stands for the transpose of the~$i$-th line of~$\mathrm{B}$ (resp. $\mathrm{C}$) for all $i\in[[1,d]]$.

Let us assume that~$\Gamma$ is decomposed as~$\Gamma_{\mathrm{D}}\cup\Gamma_{\mathrm{S}}$, where~$\Gamma_{\mathrm{D}}$ and $\Gamma_{\mathrm{S}}$  are two measurable pairwise disjoint subsets of $\Gamma$, such that $\Gamma_{\mathrm{D}}$ has a positive measure. In that case we denote $\HH^{1}_{\mathrm{D}}(\Omega,\R^d)$ the linear subspace of $\HH^{1}(\Omega,\R^d)$ defined by
$$
\HH^{1}_{\mathrm{D}}(\Omega,\R^d):=\left\{v\in\HH^{1}(\Omega,\R^d)\mid v=0 \text{ \textit{a.e.} on } \Gamma_{\mathrm{D}} \right\}.
$$

In the linear elastic model (see, e.g.,~\cite{SALEN}) the Cauchy stress tensor denoted $\sigma$ is given by
$$
\sigma(v)=\mathrm{A}\mathrm{e}(v),
$$
where $\mathrm{A}$ the stiffness tensor, and $\mathrm{e}$ is the infinitesimal strain tensor defined by 
    \begin{equation*}
    \mathrm{e}(v):=\frac{1}{2}(\nabla{v}+\nabla{ v}^{\top}),
    \end{equation*}
for all displacement field $v\in\HH^1(\Omega,\R^d)$. We also assume that all coefficients of $\mathrm{A}$ are constant (denoted by $a_{ijkl}$ for all $\left(i,j,k,l\right)\in\left\{1,...,d\right\}^{4}$), and there exists one constant $\alpha>0$ such that all coefficients of $\mathrm{A}$ and $e$ (denoted by $\epsilon_{ij}$ for all $\left(i,j\right)\in\left\{1,...,d\right\}^{2}$) satisfy
 \begin{equation*}
a_{ijkl}=a_{jikl}=a_{lkij}\text{, }
\displaystyle\sum_{i=1}^{d}\sum_{j=1}^{d}\sum_{k=1}^{d}\sum_{l=1}^{d}a_{ijkl}\epsilon_{ij}(v_1)(x)\epsilon_{kl}(v_2)(x)\geq\alpha\sum_{i=1}^{d}\sum_{j=1}^{d}\epsilon_{ij}(v_1)(x)\epsilon_{ij}(v_2)(x),
\end{equation*}    
for all displacement field $v_1,v_2\in\HH^1(\Omega,\R^d)$ and for almost all $x\in\Omega$. From the symmetry assumption on $\mathrm{A}$, note that $\mathrm{A}\mathrm{e}(v)=\mathrm{A}\nabla{v}$, for all~$v\in\HH^1_{\mathrm{D}}(\Omega,\R^d)$. Moreover, since $\Gamma_{\mathrm{D}}$ has a positive measure, then 
$$
\fonction{\dual{\cdot}{\cdot}_{\HH^{1}_{\mathrm{D}}(\Omega,\R^{d})}}{\left(\HH^{1}_{\mathrm{D}}(\Omega,\R^{d})\right)^2}{\R}{(v_1,v_2)}{\displaystyle\int_{\Omega}\mathrm{A}\mathrm{e}(v_1):\mathrm{e}(v_2),}
$$
is a scalar product on $\HH^{1}_{\mathrm{D}}(\Omega,\R^{d})$ (see, e.g.,~\cite[Chapter 3]{DUVAUTLIONS}), and we denote by $\left\|\cdot\right\|_{\HH^{1}_{\mathrm{D}}(\Omega,\R^{d})}$ the corresponding norm.

For any $v\in\LL^{2}(\Gamma,\R^d)$, one writes
$v=v_{\nn}\nn+v_\tau$,
where $v_{\nn}:=v\cdot\nn\in\LL^{2}(\Gamma,\R)$ 
and $v_{\tau}:=v-v_{\nn}\nn\in\LL^2(\Gamma,\R^d)$.
In particular, if the stress vector $\mathrm{A}\mathrm{e}(v)\nn$ belongs to $\LL^2(\Gamma_{\mathrm{S}},\R^d)$ for some $v\in\HH^1(\Omega,\R^d)$, then we use the notation 
$$
\mathrm{A}\mathrm{e}(v)\nn=\sigma_{\nn}(v)\nn+\sigma_\tau(v),
$$
where $\sigma_{\nn}(v)\in\LL^{2}(\Gamma_{\mathrm{S}},\R)$ is the normal stress and $\sigma_{\tau}\in\LL^{2}(\Gamma_{\mathrm{S}},\R^d)$ the shear stress. We also denote, for all $(x,y)\in\R^d\times\R^d$, by $xy^{\top}$ the matrix whose the $i$-th line is given by the vector $x_iy$, where~$x_i\in\R$ is the $i$-th component of $x$, for all~$i\in[[1,d]]$.

In the sequel, consider $k\in\LL^2(\Omega,\R^d)$, $h\in\LL^2(\Gamma_{\mathrm{S}},\R^d)$ and $w\in\HH^1_{\mathrm{D}}(\Omega,\R^d)$.

\subsubsection{A Problem with Dirichlet-Neumann Conditions}
Consider the Dirichlet-Neumann problem given by
\begin{equation}\tag{DN}\label{PbNeumannDirichlet}
\arraycolsep=2pt
\left\{
\begin{array}{rcll}
-\mathrm{div}(\mathrm{A}\mathrm{e}(F)) & = & k   & \text{ in } \Omega , \\
F & = & 0  & \text{ on } \Gamma_{\mathrm{D}} ,\\
\mathrm{A}\mathrm{e}(F)\nn & = & h  & \text{ on } \Gamma_{\mathrm{S}}.
\end{array}
\right.
\end{equation}

\begin{myDefn}[Strong solution to the Dirichlet-Neumann problem]
A (strong) solution to the Dirichlet-Neumann problem~\eqref{PbNeumannDirichlet} is a function $F\in\HH^1(\Omega,\R^{d})$ such that $-\mathrm{div}(\mathrm{A}\mathrm{e}(F))=k$ in~$\MD'(\Omega,\R^d)$,~$F=0$ \textit{a.e.} on $\Gamma_{\mathrm{D}}$, $\mathrm{A}\mathrm{e}(F)\nn\in\LL^{2}(\Gamma_{\mathrm{S}},\R^d)$ with $\mathrm{A}\mathrm{e}(F)\nn=h$ \textit{a.e.} on $\Gamma_{\mathrm{S}}$.
\end{myDefn}

\begin{myDefn}[Weak solution to the Dirichlet-Neumann problem]
A weak solution to the Dirichlet-Neumann problem~\eqref{PbNeumannDirichlet} is a function $F\in\HH^{1}_{\mathrm{D}}(\Omega,\R^{d})$ such that
\begin{equation*}
\int_{\Omega}\mathrm{A}\mathrm{e}(F):\mathrm{e}(v)=\int_{\Omega}k\cdot v+\int_{\Gamma_{\mathrm{S}}}h\cdot v, \qquad \forall v\in\HH^{1}_{\mathrm{D}}(\Omega,\R^{d}).
\end{equation*}
\end{myDefn}

\begin{myProp}
A function $F\in\HH^1(\Omega,\R^{d})$ is a (strong) solution to the Dirichlet-Neumann problem~\eqref{PbNeumannDirichlet} if and only if $F$ is a weak solution to the Dirichlet-Neumann problem~\eqref{PbNeumannDirichlet}.
\end{myProp}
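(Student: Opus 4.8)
The plan is to derive both implications from the divergence (Green's) formula of Proposition~\ref{div}, which is the only nonelementary ingredient needed. The recurring observation is that, by the symmetry of the stiffness tensor, one has $\mathrm{A}\mathrm{e}(F):\nabla v=\mathrm{A}\mathrm{e}(F):\mathrm{e}(v)$ for every field $v\in\HH^1(\Omega,\R^d)$, so the bilinear form of the weak formulation is precisely what integration by parts produces.

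First I would handle the implication \emph{strong $\Rightarrow$ weak}. If $F$ is a strong solution, then $-\mathrm{div}(\mathrm{A}\mathrm{e}(F))=k\in\LL^2(\Omega,\R^d)$ forces $\mathrm{A}\mathrm{e}(F)\in\HH_{\mathrm{div}}(\Omega,\R^{d\times d})$, so the normal trace $\mathrm{A}\mathrm{e}(F)\nn\in\HH^{-1/2}(\Gamma,\R^d)$ is well defined and Proposition~\ref{div} applies. Testing against an arbitrary $v\in\HH^1_{\mathrm{D}}(\Omega,\R^d)$ gives
$$
\int_{\Omega}\mathrm{A}\mathrm{e}(F):\mathrm{e}(v)=-\int_{\Omega}\mathrm{div}(\mathrm{A}\mathrm{e}(F))\cdot v+\langle \mathrm{A}\mathrm{e}(F)\nn,v\rangle_{\Gamma}.
$$
Since $v=0$ a.e.\ on $\Gamma_{\mathrm{D}}$, the boundary duality is carried by $\Gamma_{\mathrm{S}}$ only; and because $\mathrm{A}\mathrm{e}(F)\nn\in\LL^2(\Gamma_{\mathrm{S}},\R^d)$ equals $h$ there, it reduces to $\int_{\Gamma_{\mathrm{S}}}h\cdot v$. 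Substituting $-\mathrm{div}(\mathrm{A}\mathrm{e}(F))=k$ then yields the weak formulation.

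For the reverse implication \emph{weak $\Rightarrow$ strong}, I would start from a weak solution $F\in\HH^1_{\mathrm{D}}(\Omega,\R^d)$, which already gives $F=0$ a.e.\ on $\Gamma_{\mathrm{D}}$. Restricting the variational identity to $v\in\MD(\Omega,\R^d)$ kills the boundary integral and, by the distributional definition of the divergence, reads off $-\mathrm{div}(\mathrm{A}\mathrm{e}(F))=k$ in $\MD'(\Omega,\R^d)$. In particular $\mathrm{div}(\mathrm{A}\mathrm{e}(F))\in\LL^2(\Omega,\R^d)$, so $\mathrm{A}\mathrm{e}(F)\in\HH_{\mathrm{div}}(\Omega,\R^{d\times d})$ and its normal trace is defined. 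Applying Proposition~\ref{div} for general $v\in\HH^1_{\mathrm{D}}(\Omega,\R^d)$ and subtracting the weak formulation leaves
$$
\langle \mathrm{A}\mathrm{e}(F)\nn,v\rangle_{\Gamma}=\int_{\Gamma_{\mathrm{S}}}h\cdot v,\qquad\forall v\in\HH^1_{\mathrm{D}}(\Omega,\R^d).
$$

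The step I expect to be the main obstacle is upgrading this equality of pairings to the pointwise Neumann condition $\mathrm{A}\mathrm{e}(F)\nn\in\LL^2(\Gamma_{\mathrm{S}},\R^d)$ with $\mathrm{A}\mathrm{e}(F)\nn=h$ a.e.\ on $\Gamma_{\mathrm{S}}$. Here I would invoke that the traces on $\Gamma$ of fields in $\HH^1_{\mathrm{D}}(\Omega,\R^d)$ exhaust, densely for the relevant topology, the admissible test traces supported on $\Gamma_{\mathrm{S}}$, by surjectivity of the trace operator together with the fact that $\Gamma_{\mathrm{D}}$ only carries the homogeneous constraint. Since the right-hand side is the $\LL^2(\Gamma_{\mathrm{S}},\R^d)$ pairing against the fixed function $h$, the restriction to $\Gamma_{\mathrm{S}}$ of the a priori only $\HH^{-1/2}$ distribution $\mathrm{A}\mathrm{e}(F)\nn$ is represented by $h$; it therefore belongs to $\LL^2(\Gamma_{\mathrm{S}},\R^d)$ and coincides with $h$ almost everywhere. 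This is exactly the Neumann condition of the strong formulation, and combined with $F=0$ a.e.\ on $\Gamma_{\mathrm{D}}$ it shows that $F$ is a strong solution, closing the equivalence.
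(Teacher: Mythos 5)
Your proof is correct, and it follows the standard (and essentially unique) route that the paper itself points to: both implications come from the divergence formula of Proposition~\ref{div} combined with the symmetry of $\mathrm{A}$ (so that $\mathrm{A}\mathrm{e}(F):\nabla v=\mathrm{A}\mathrm{e}(F):\mathrm{e}(v)$), with the Neumann condition recovered by identifying the $\HH^{-1/2}$ normal trace against test functions vanishing on $\Gamma_{\mathrm{D}}$. Note that the paper omits this proof, deferring to~\cite{4ABC}, and your argument is precisely the expected one, including the only delicate point, namely that the equality $\dual{\mathrm{A}\mathrm{e}(F)\nn}{v}=\int_{\Gamma_{\mathrm{S}}}h\cdot v$ for all $v\in\HH^{1}_{\mathrm{D}}(\Omega,\R^d)$ is exactly what it means for the normal trace restricted to $\Gamma_{\mathrm{S}}$ to be the $\LL^2$ function $h$.
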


Using the Riesz representation theorem, we obtain the following existence/uniqueness result.

\begin{myProp}
The Dirichlet-Neumann problem~\eqref{PbNeumannDirichlet} admits a unique solution $F\in\HH^{1}_{\mathrm{D}}(\Omega,\R^{d})$. 
\end{myProp}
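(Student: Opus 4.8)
The plan is to recast the weak formulation as a single variational equality in the Hilbert space $\HH^{1}_{\mathrm{D}}(\Omega,\R^{d})$ and to invoke the Riesz representation theorem. First I would observe that the left-hand side of the weak formulation is exactly the scalar product introduced above, namely
$$
\int_{\Omega}\mathrm{A}\mathrm{e}(F):\mathrm{e}(v)=\dual{F}{v}_{\HH^{1}_{\mathrm{D}}(\Omega,\R^{d})},
$$
so that finding a weak solution amounts to finding $F\in\HH^{1}_{\mathrm{D}}(\Omega,\R^{d})$ such that $\dual{F}{v}_{\HH^{1}_{\mathrm{D}}(\Omega,\R^{d})}=L(v)$ for all $v\in\HH^{1}_{\mathrm{D}}(\Omega,\R^{d})$, where $L$ is the linear form defined by $L(v):=\int_{\Omega}k\cdot v+\int_{\Gamma_{\mathrm{S}}}h\cdot v$. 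Because the bilinear form here \emph{is} the scalar product (rather than merely a coercive form), the whole problem reduces to checking that $L$ is a continuous linear functional on $\left(\HH^{1}_{\mathrm{D}}(\Omega,\R^{d}),\dual{\cdot}{\cdot}_{\HH^{1}_{\mathrm{D}}(\Omega,\R^{d})}\right)$, and plain Riesz representation suffices.

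Linearity of $L$ being immediate, the next step is to bound $L$. For the volume term, the Cauchy--Schwarz inequality gives $\abs{\int_{\Omega}k\cdot v}\leq\norm{k}_{\LL^2(\Omega,\R^d)}\norm{v}_{\LL^2(\Omega,\R^d)}$. For the boundary term, Cauchy--Schwarz on $\Gamma_{\mathrm{S}}$ followed by the continuity of the trace operator $\HH^{1}(\Omega,\R^d)\to\LL^2(\Gamma,\R^d)$ yields $\abs{\int_{\Gamma_{\mathrm{S}}}h\cdot v}\leq\norm{h}_{\LL^2(\Gamma_{\mathrm{S}},\R^d)}\norm{v}_{\LL^2(\Gamma_{\mathrm{S}},\R^d)}\leq C\norm{h}_{\LL^2(\Gamma_{\mathrm{S}},\R^d)}\norm{v}_{\HH^1(\Omega,\R^d)}$. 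Since $\Gamma_{\mathrm{D}}$ has positive measure, Korn's inequality (already used above to assert that $\dual{\cdot}{\cdot}_{\HH^{1}_{\mathrm{D}}(\Omega,\R^{d})}$ defines a scalar product) ensures that $\norm{\cdot}_{\HH^{1}_{\mathrm{D}}(\Omega,\R^{d})}$ is equivalent to the standard $\HH^1$-norm on $\HH^{1}_{\mathrm{D}}(\Omega,\R^{d})$; combining the two estimates then produces a constant $C'>0$ with $\abs{L(v)}\leq C'\norm{v}_{\HH^{1}_{\mathrm{D}}(\Omega,\R^{d})}$ for all $v$, so that $L\in\left(\HH^{1}_{\mathrm{D}}(\Omega,\R^{d})\right)^{\ast}$.

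Once continuity of $L$ is established, the Riesz representation theorem provides a unique $F\in\HH^{1}_{\mathrm{D}}(\Omega,\R^{d})$ representing $L$ through the scalar product, which is precisely the unique weak solution; the preceding proposition (equivalence of weak and strong solutions) then identifies it with the announced unique solution. I do not expect any genuine obstacle here: the statement is a textbook consequence of Riesz, and the only point requiring minor care is the continuity of the boundary term, which rests on the trace theorem, together with the norm equivalence coming from Korn's inequality --- both of which are already implicitly invoked in the construction of the scalar product on $\HH^{1}_{\mathrm{D}}(\Omega,\R^{d})$.
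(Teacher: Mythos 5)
Your proposal is correct and follows exactly the route the paper indicates: the paper omits the detailed proof (deferring to~\cite{4ABC}) but states explicitly that the result follows from the Riesz representation theorem, applied precisely as you do, with the bilinear form being the scalar product $\dual{\cdot}{\cdot}_{\HH^{1}_{\mathrm{D}}(\Omega,\R^{d})}$ and the right-hand side a continuous linear functional (via Cauchy--Schwarz, the trace theorem, and the Korn-type norm equivalence guaranteed by $\Gamma_{\mathrm{D}}$ having positive measure). No gaps; your treatment of the boundary term's continuity is the only step needing care, and you handle it correctly.
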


\subsubsection{A Signorini problem}\label{SectionSignorinicasscalairesansu}

In this part, let us assume that $\Gamma_{\mathrm{S}}$ is decomposed, up to a null set, as 
$
\Gamma_{\mathrm{S}_{\mathrm{N}}}\cup\Gamma_{\mathrm{S}_{\mathrm{D}}}\cup\Gamma_{\mathrm{S}_{\mathrm{S}}},
$
where~$\Gamma_{\mathrm{S}_{\mathrm{N}}}$,~$\Gamma_{\mathrm{S}_{\mathrm{D}}}$ and $\Gamma_{\mathrm{S}_{\mathrm{S}}}$ are three measurable pairwise disjoint subsets of $\Gamma_{\mathrm{S}}$.
Consider the Signorini problem given by
\begin{equation}\tag{SiP}\label{PbtangSignorini}
\arraycolsep=2pt
\left\{
\begin{array}{rcll}
-\mathrm{div}(\mathrm{A}\mathrm{e}(u)) & = & k   & \text{ in } \Omega , \\
u & = & 0  & \text{ on } \Gamma_{\mathrm{D}} ,\\
\sigma_{\tau}(u) & = & h_\tau  & \text{ on } \Gamma_{\mathrm{S}} ,\\
\sigma_{\nn}(u) & = & h_{\nn}   & \text{ on } \Gamma_{\mathrm{S}_{\mathrm{N}}} ,\\
u_{\nn} & = & w_{\nn}  & \text{ on } \Gamma_{\mathrm{S}_{\mathrm{D}}} ,\\
u_{\nn}\leq w_{\nn}\text{, } \sigma_{\nn}(u)\leq h_{\nn} \text{ and } \left(u_{\nn}-w_{\nn}\right)\left(\sigma_{\nn}(u)-h_{\nn} \right)& = & 0  & \text{ on } \Gamma_{\mathrm{S}_{\mathrm{S}}},
\end{array}
\right.
\end{equation}
where the data are given at the beginning of Section~\ref{BVP}.
\begin{myDefn}[Strong solution]
A (strong) solution to the problem~\eqref{PbtangSignorini} is a function $u\in\HH^1(\Omega,\R^{d})$ such that $-\mathrm{div}(\mathrm{A}\mathrm{e}(u))=k$ in $\MD'(\Omega,\R^d)$,~$u=0$ \textit{a.e.} on $\Gamma_{\mathrm{D}}$, $u_{\nn}=w_{\nn}$ \textit{a.e.} on $\Gamma_{\mathrm{S}_{\mathrm{D}}} $, $\mathrm{A}\mathrm{e}(u)\nn\in\LL^{2}(\Gamma_{\mathrm{S}},\R^d)$ with $\sigma_{\tau}(u)=h_\tau$ \textit{a.e.} on $\Gamma_{\mathrm{S}}$, $\sigma_{\nn}=h_{\nn}$ \textit{a.e.} on $\Gamma_{\mathrm{S}_{\mathrm{N}}}$, $u_{\nn}\leq w_{\nn}$, $\sigma_{\nn}(u)\leq h_{\nn} \text{ and } (u_{\nn}-w_{\nn})(\sigma_{\nn}(u)-h_{\nn})=0$ \textit{a.e.} on $\Gamma_{\mathrm{S}_{\mathrm{S}}}$.
\end{myDefn}

\begin{myDefn}[Weak solution]
A weak solution to problem~\eqref{PbtangSignorini} is a function $u\in\mathcal{K}^{1}_w(\Omega)$ such that
\begin{equation*}
\displaystyle\int_{\Omega}\mathrm{A}\mathrm{e}(u):\mathrm{e}(v-u)\geq\int_{\Omega}k\cdot (v-u)+\int_{\Gamma_{\mathrm{S}}}h\cdot \left(v-u\right), \qquad \forall v\in\mathcal{K}^{1}_w(\Omega),
\end{equation*}
where $\mathcal{K}^{1}_w(\Omega)$ is the nonempty closed convex subset of $\HH^{1}_{\mathrm{D}}(\Omega,\R^{d})$ defined by
$$
\mathcal{K}^{1}_w(\Omega) := \left\{v\in\HH^{1}_{\mathrm{D}}(\Omega,\R^{d}) \mid v_{\nn}=w_{\nn} \text{ \textit{a.e.} on } \Gamma_{\mathrm{S}_{\mathrm{D}}}\text{ and }v_{\nn}\leq w_{\nn}\text{ \textit{a.e.} on }\Gamma_{\mathrm{S}_{\mathrm{S}}}  \right \}.
$$
\end{myDefn}

One can prove that a (strong) solution is a weak solution but, to the best of our knowledge, without additional assumption, one cannot prove the converse. To get the equivalence, we need to assume, in particular, that the decomposition $\Gamma_{\mathrm{D}}\cup\Gamma_{\mathrm{S}_{\mathrm{N}}}\cup\Gamma_{\mathrm{S}_{\mathrm{D}}}\cup\Gamma_{\mathrm{S}_{\mathrm{S}}}$ of $\Gamma$ is \textit{consistent} in the following sense.
\begin{myDefn}[Consistent decomposition]\label{regulieresens2}
 The decomposition   $\Gamma_{\mathrm{D}}\cup\Gamma_{\mathrm{S}_{\mathrm{N}}}\cup\Gamma_{\mathrm{S}_{\mathrm{D}}}\cup\Gamma_{\mathrm{S}_{\mathrm{S}}}$ of $\Gamma$ is said to be \textit{consistent} if
 \begin{enumerate}
     \item for almost all $s\in\Gamma_{\mathrm{S}_{\mathrm{S}}}$,  $s\in \mathrm{int}_{\Gamma}(\Gamma_{\mathrm{S}_{\mathrm{S}}})$;
     \item the nonempty closed convex subset $\mathcal{K}^{1/2}_{w}(\Gamma)$ of $\HH^{1/2}(\Gamma,\R^d)$ defined by
    \begin{multline*}
            \mathcal{K}^{1/2}_{w}(\Gamma):=\biggl\{ v\in \HH^{1/2}(\Gamma,\R^d) \mid v=0 \text{ \textit{a.e.} on } \Gamma_{\mathrm{D}}\text{, } v_{\nn}=w_{\nn} \text{ \textit{a.e.} on } \Gamma_{\mathrm{S}_{\mathrm{D}}} \\ \text{ and }v_{\nn}\leq w_{\nn}\text{ \textit{a.e.} on }\Gamma_{\mathrm{S}_{\mathrm{S}}}  \biggl\},
    \end{multline*}        
is dense in  the nonempty closed convex subset $\mathcal{K}^{0}_{w}(\Gamma)$ of $\mathrm{L}^{2}(\Gamma,\R^d)$ defined by
     \begin{multline*}
         \mathcal{K}^{0}_{w}(\Gamma):=\biggl\{ v\in \mathrm{L}^{2}(\Gamma,\R^d) \mid v=0 \text{ \textit{a.e.} on } \Gamma_{\mathrm{D}}\text{, }v_{\nn}=w_{\nn} \text{ \textit{a.e.} on } \Gamma_{\mathrm{S}_{\mathrm{D}}} \\ \text{ and }v_{\nn}\leq w_{\nn}\text{ \textit{a.e.} on }\Gamma_{\mathrm{S}_{\mathrm{S}}} \biggl \}.
    \end{multline*}

\end{enumerate}
\end{myDefn}

\begin{myProp}\label{EquiSignorini}
Let $u\in \HH^1(\Omega,\R^{d})$.
\begin{enumerate}
    \item If $u$ is a (strong) solution to the problem~\eqref{PbtangSignorini}, then $u$ is a weak solution to the problem~\eqref{PbtangSignorini}.
    \item If $u$ is a weak solution to the problem~\eqref{PbtangSignorini} such that $\mathrm{A}\mathrm{e}(u)\nn\in\LL^{2}(\Gamma_{\mathrm{S}},\R^d)$ and the decomposition $\Gamma_{\mathrm{D}}\cup\Gamma_{\mathrm{S}_{\mathrm{N}}}\cup\Gamma_{\mathrm{S}_{\mathrm{D}}}\cup\Gamma_{\mathrm{S}_{\mathrm{S}}}$ of $\Gamma$ is consistent, then $u$ is a (strong) solution to the problem~\eqref{PbtangSignorini}.
\end{enumerate}
\end{myProp}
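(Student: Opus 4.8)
The plan is to prove the two implications separately, using the divergence formula (Proposition~\ref{div}) to pass between the bulk equation and the boundary integrals, and the consistency assumption (Definition~\ref{regulieresens2}) to upgrade a variational inequality tested on $\HH^{1/2}$-traces into a pointwise statement on $\LL^2(\Gamma_{\mathrm{S}})$.

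For the first implication (strong $\Rightarrow$ weak), I would first note that membership $u\in\mathcal{K}^{1}_w(\Omega)$ is immediate from $u=0$ on $\Gamma_{\mathrm{D}}$, $u_{\nn}=w_{\nn}$ on $\Gamma_{\mathrm{S}_{\mathrm{D}}}$ and $u_{\nn}\leq w_{\nn}$ on $\Gamma_{\mathrm{S}_{\mathrm{S}}}$. Since $-\mathrm{div}(\mathrm{A}\mathrm{e}(u))=k\in\LL^2$ gives $\mathrm{A}\mathrm{e}(u)\in\HH_{\mathrm{div}}(\Omega,\R^{d\times d})$ and $\mathrm{A}\mathrm{e}(u)\nn\in\LL^2(\Gamma_{\mathrm{S}})$, the divergence formula applied to $v-u$ for arbitrary $v\in\mathcal{K}^{1}_w(\Omega)$ yields
$$
\int_{\Omega}\mathrm{A}\mathrm{e}(u):\mathrm{e}(v-u) = \int_{\Omega} k\cdot(v-u) + \int_{\Gamma_{\mathrm{S}}}\mathrm{A}\mathrm{e}(u)\nn\cdot(v-u).
$$
It then remains to show the boundary term dominates $\int_{\Gamma_{\mathrm{S}}}h\cdot(v-u)$. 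Splitting $\mathrm{A}\mathrm{e}(u)\nn\cdot(v-u)=\sigma_{\nn}(u)(v-u)_{\nn}+\sigma_{\tau}(u)\cdot(v-u)_{\tau}$ and using $\sigma_{\tau}(u)=h_\tau$ on $\Gamma_{\mathrm{S}}$ disposes of the tangential contribution. For the normal part I would decompose $\Gamma_{\mathrm{S}}=\Gamma_{\mathrm{S}_{\mathrm{N}}}\cup\Gamma_{\mathrm{S}_{\mathrm{D}}}\cup\Gamma_{\mathrm{S}_{\mathrm{S}}}$: on $\Gamma_{\mathrm{S}_{\mathrm{N}}}$ use $\sigma_{\nn}(u)=h_{\nn}$; on $\Gamma_{\mathrm{S}_{\mathrm{D}}}$ both $u_{\nn}$ and $v_{\nn}$ equal $w_{\nn}$, so $(v-u)_{\nn}=0$; and on $\Gamma_{\mathrm{S}_{\mathrm{S}}}$ the key algebraic step is to write $(\sigma_{\nn}(u)-h_{\nn})(v_{\nn}-u_{\nn})=(\sigma_{\nn}(u)-h_{\nn})(v_{\nn}-w_{\nn})+(\sigma_{\nn}(u)-h_{\nn})(w_{\nn}-u_{\nn})$ and observe that the first summand is a product of two nonpositive factors (hence $\geq0$) while the second vanishes by the complementarity condition. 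This establishes the variational inequality.

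For the converse (weak $\Rightarrow$ strong), I would proceed in three stages. First, testing the inequality with $v=u\pm\varphi$ for $\varphi\in\MD(\Omega,\R^d)$ (which lies in $\mathcal{K}^{1}_w(\Omega)$ since it does not alter the boundary data) produces $\int_{\Omega}\mathrm{A}\mathrm{e}(u):\mathrm{e}(\varphi)=\int_{\Omega}k\cdot\varphi$ for every such $\varphi$, i.e. $-\mathrm{div}(\mathrm{A}\mathrm{e}(u))=k$ in $\MD'(\Omega,\R^d)$. Second, with this equation and the hypothesis $\mathrm{A}\mathrm{e}(u)\nn\in\LL^2(\Gamma_{\mathrm{S}})$, the divergence formula converts the variational inequality into the boundary inequality
$$
\int_{\Gamma_{\mathrm{S}}}\bigl(\mathrm{A}\mathrm{e}(u)\nn-h\bigr)\cdot(v-u)\geq0,\qquad\forall v\in\mathcal{K}^{1}_w(\Omega).
$$
By surjectivity of the trace operator onto $\{\phi\in\HH^{1/2}(\Gamma)\mid \phi=0\text{ on }\Gamma_{\mathrm{D}}\}$, this is equivalent to the same inequality with $v-u$ replaced by $\phi-\gamma u$, where $\phi$ now ranges over $\mathcal{K}^{1/2}_w(\Gamma)$. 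Third, setting $g:=\mathrm{A}\mathrm{e}(u)\nn-h\in\LL^2(\Gamma_{\mathrm{S}})$, the functional $\phi\mapsto\int_{\Gamma_{\mathrm{S}}}g\cdot(\phi-\gamma u)$ is $\LL^2(\Gamma)$-continuous, so the density of $\mathcal{K}^{1/2}_w(\Gamma)$ in $\mathcal{K}^{0}_w(\Gamma)$ (consistency) extends the inequality to all $\phi\in\mathcal{K}^{0}_w(\Gamma)$.

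Once the inequality holds over the $\LL^2$ set $\mathcal{K}^{0}_w(\Gamma)$, I would recover each pointwise condition by inserting localized perturbations $\phi=\gamma u+\rho$: a purely tangential $\rho$ on $\Gamma_{\mathrm{S}}$, taken in both signs, forces $\sigma_{\tau}(u)=h_\tau$ a.e.\ on $\Gamma_{\mathrm{S}}$; a normal perturbation supported on $\Gamma_{\mathrm{S}_{\mathrm{N}}}$ (where $v_{\nn}$ is unconstrained), again in both signs, forces $\sigma_{\nn}(u)=h_{\nn}$; on $\Gamma_{\mathrm{S}_{\mathrm{S}}}$ a nonpositive normal perturbation gives $\sigma_{\nn}(u)\leq h_{\nn}$, while the admissible choice $\phi_{\nn}=w_{\nn}$ combined with the sign information forces $(u_{\nn}-w_{\nn})(\sigma_{\nn}(u)-h_{\nn})=0$ a.e. I expect the main obstacle to be exactly this passage from a trace-space inequality to a pointwise one: the weak formulation only ``sees'' admissible $\HH^{1/2}$ directions, and it is precisely the consistency hypothesis — the density of $\mathcal{K}^{1/2}_w(\Gamma)$ in $\mathcal{K}^{0}_w(\Gamma)$ together with the relative-interior condition on $\Gamma_{\mathrm{S}_{\mathrm{S}}}$ ensuring the sharply localized perturbations are genuinely admissible — that licenses testing against the $\LL^2$ fields needed to extract the sign conditions and the complementarity relation.
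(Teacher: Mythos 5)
Your proof is correct, and both implications are argued soundly: for the direct implication, the membership $u\in\mathcal{K}^{1}_w(\Omega)$, the divergence formula (Proposition~\ref{div}), and the splitting $(\sigma_{\nn}(u)-h_{\nn})(v_{\nn}-u_{\nn})=(\sigma_{\nn}(u)-h_{\nn})(v_{\nn}-w_{\nn})+(\sigma_{\nn}(u)-h_{\nn})(w_{\nn}-u_{\nn})$ on $\Gamma_{\mathrm{S}_{\mathrm{S}}}$ (nonnegative first term, vanishing second term) give exactly the weak inequality; for the converse, the chain ``distributional equation via $\MD(\Omega,\R^d)$ test functions, divergence formula, trace surjectivity onto $\mathcal{K}^{1/2}_{w}(\Gamma)$, density into $\mathcal{K}^{0}_{w}(\Gamma)$, localized $\LL^{2}$ perturbations'' recovers each boundary condition, including complementarity from the choice $\phi_{\nn}=w_{\nn}$ on $\Gamma_{\mathrm{S}_{\mathrm{S}}}$ combined with the two sign conditions already extracted. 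Note that the paper itself contains no proof of Proposition~\ref{EquiSignorini}: Subsection~\ref{BVP} explicitly defers all such proofs to~\cite{4ABC}, so there is no in-paper argument to compare against; yours is the standard route that such proofs follow.

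One remark: your argument only uses the second item of Definition~\ref{regulieresens2} (the density of $\mathcal{K}^{1/2}_{w}(\Gamma)$ in $\mathcal{K}^{0}_{w}(\Gamma)$). The first item --- that almost every point of $\Gamma_{\mathrm{S}_{\mathrm{S}}}$ belongs to $\mathrm{int}_{\Gamma}(\Gamma_{\mathrm{S}_{\mathrm{S}}})$ --- is never invoked: once the inequality has been extended to $\mathcal{K}^{0}_{w}(\Gamma)$, the perturbations you insert (tangential fields of either sign on $\Gamma_{\mathrm{S}}$, normal fields of either sign on $\Gamma_{\mathrm{S}_{\mathrm{N}}}$, nonpositive normal fields on $\Gamma_{\mathrm{S}_{\mathrm{S}}}$, and $\phi_{\nn}=w_{\nn}$ on $\Gamma_{\mathrm{S}_{\mathrm{S}}}$) are admissible in $\LL^{2}$ over arbitrary measurable subsets, with no relative-interior condition required. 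Your closing sentence therefore misattributes the role of that hypothesis; it is needed only if one works with sharply localized $\HH^{1/2}$ (or smooth) test functions instead of passing to $\LL^{2}$ by density. This is not a gap --- if anything, your proof establishes the slightly stronger statement in which only the density condition is assumed.
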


\begin{myProp}
The problem~\eqref{PbtangSignorini} admits a unique weak solution $u\in\HH^{1}_{\mathrm{D}}(\Omega,\R^d)$ which is given by
$$
    \displaystyle u=\mathrm{prox}_{\iota_{\mathcal{K}^{1}_w(\Omega)}}(F),
$$
where $F\in\HH^{1}_{\mathrm{D}}(\Omega,\R^{d})$ is the unique solution to the Dirichlet-Neumann problem~\eqref{PbNeumannDirichlet}, and $\mathrm{prox}_{\iota_{\mathcal{K}^{1}_w(\Omega)}}$ stands for the proximal operator associated with the indicator function $\iota_{\mathcal{K}^{1}_w(\Omega)}$ considered on the Hilbert space $(\HH^{1}_{\mathrm{D}}(\Omega,\R^{d}),\dual{\cdot}{\cdot}_{\HH^{1}_{\mathrm{D}}(\Omega,\R^{d})})$.
\end{myProp}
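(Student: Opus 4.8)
The plan is to recast the weak formulation as a metric projection in the Hilbert space $(\HH^{1}_{\mathrm{D}}(\Omega,\R^{d}),\dual{\cdot}{\cdot}_{\HH^{1}_{\mathrm{D}}(\Omega,\R^{d})})$, and then to invoke Remark~\ref{projetprox} to identify that projection with the announced proximal operator. The crucial observation, already available from Subsection~\ref{BVP}, is that the bilinear form on the left-hand side of the variational inequality is \emph{exactly} the scalar product on $\HH^{1}_{\mathrm{D}}(\Omega,\R^{d})$, that is $\int_{\Omega}\mathrm{A}\mathrm{e}(u):\mathrm{e}(v)=\dual{u}{v}_{\HH^{1}_{\mathrm{D}}(\Omega,\R^{d})}$. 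Thus a weak solution is a $u\in\mathcal{K}^{1}_w(\Omega)$ satisfying $\dual{u}{v-u}_{\HH^{1}_{\mathrm{D}}(\Omega,\R^{d})}\geq L(v-u)$ for all $v\in\mathcal{K}^{1}_w(\Omega)$, where $L(v):=\int_{\Omega}k\cdot v+\int_{\Gamma_{\mathrm{S}}}h\cdot v$.

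First I would verify that $L$ is a continuous linear functional on $\HH^{1}_{\mathrm{D}}(\Omega,\R^{d})$: the volume term is bounded via Cauchy--Schwarz since $k\in\LL^{2}(\Omega,\R^{d})$, while the boundary term is bounded by combining the continuity of the trace map $\HH^{1}_{\mathrm{D}}(\Omega,\R^{d})\to\LL^{2}(\Gamma_{\mathrm{S}},\R^{d})$ with $h\in\LL^{2}(\Gamma_{\mathrm{S}},\R^{d})$. By the Riesz representation theorem there is a unique $F\in\HH^{1}_{\mathrm{D}}(\Omega,\R^{d})$ with $L(v)=\dual{F}{v}_{\HH^{1}_{\mathrm{D}}(\Omega,\R^{d})}$ for all $v$; by the preceding subsection this $F$ is precisely the weak (hence strong) solution of the Dirichlet--Neumann problem~\eqref{PbNeumannDirichlet} with data $k$ and $h$.

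With this representation, the variational inequality reads $\dual{F-u}{v-u}_{\HH^{1}_{\mathrm{D}}(\Omega,\R^{d})}\leq 0$ for all $v\in\mathcal{K}^{1}_w(\Omega)$, which is exactly the variational characterization of the metric projection of $F$ onto $\mathcal{K}^{1}_w(\Omega)$. Since $\mathcal{K}^{1}_w(\Omega)$ is nonempty (it contains $w$, which trivially satisfies its defining constraints), closed and convex, the projection theorem yields existence and uniqueness of $u=\mathrm{proj}_{\mathcal{K}^{1}_w(\Omega)}(F)$; Remark~\ref{projetprox} then gives $u=\mathrm{prox}_{\iota_{\mathcal{K}^{1}_w(\Omega)}}(F)$, which is the desired conclusion.

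The argument is essentially routine once the bilinear form is identified with the inner product, because coercivity is then automatic (the form equals $\norm{\cdot}^{2}_{\HH^{1}_{\mathrm{D}}(\Omega,\R^{d})}$) and the general Lions--Stampacchia theory collapses to the elementary Hilbert projection theorem. The only points deserving genuine care are the trace estimate ensuring that the boundary integral in $L$ defines a bounded functional, and the nonemptiness of $\mathcal{K}^{1}_w(\Omega)$ guaranteeing that the projection is well defined; I do not anticipate any substantive obstacle beyond these verifications.
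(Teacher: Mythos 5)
Your proposal is correct and takes essentially the same route as the paper: the paper omits the detailed proof (deferring to the reference it cites), but its intended argument is exactly yours, namely identifying the bilinear form with the scalar product $\dual{\cdot}{\cdot}_{\HH^{1}_{\mathrm{D}}(\Omega,\R^{d})}$, representing the right-hand side via Riesz as the Dirichlet--Neumann solution $F$, rewriting the variational inequality as the projection characterization $\dual{F-u}{v-u}_{\HH^{1}_{\mathrm{D}}(\Omega,\R^{d})}\leq 0$ on the nonempty closed convex set $\mathcal{K}^{1}_w(\Omega)$, and invoking Remark~\ref{projetprox} to conclude $u=\mathrm{prox}_{\iota_{\mathcal{K}^{1}_w(\Omega)}}(F)$. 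Your verifications (trace continuity for the boundary term, $w\in\mathcal{K}^{1}_w(\Omega)$ for nonemptiness) are exactly the points that need checking, so there is nothing to add.
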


\begin{myRem}\normalfont
    Note that, from Remark~\ref{projetprox}, the unique weak solution $u\in\HH^{1}_{\mathrm{D}}(\Omega,\R^{d})$ to the problem~\eqref{PbtangSignorini} is also characterized by the projection operator since $\mathrm{prox}_{\iota_{\mathcal{K}^{1}_w(\Omega)}}= \mathrm{proj}_{\mathcal{K}^{1}_w(\Omega)}$.
\end{myRem}

\section{Main results}\label{mainresultoff}

Let $d\in\left\{2,3\right\}$ and~$f\in\HH^1(\R^d,\R^d)$. Let $\Omega_{\mathrm{ref}}$ be a nonempty connected bounded open subset of~$\R^d$ with Lipschitz boundary $\Gamma_{\mathrm{ref}}:=\partial{\Omega_{\mathrm{ref}}}$. We assume that $\Gamma_{\mathrm{ref}}=\Gamma_{\mathrm{D}}\cup{\Gamma_{\mathrm{S}_{\mathrm{ref}}}}$, where $\Gamma_{\mathrm{D}}$ and~${\Gamma_{\mathrm{S}_{\mathrm{ref}}}}$ are two measurable pairwise disjoint subsets of $\Gamma_{\mathrm{ref}}$, such that $\Gamma_{\mathrm{D}}$ has a positive measure. We consider the set of admissible shapes~$\mathcal{U}_{\mathrm{ref}}$ defined in~\eqref{setadmiss}.
Note that, all shapes in $\mathcal{U}_{\mathrm{ref}}$ have $\Gamma_{\mathrm{D}}$ as common boundary part. 

We consider the shape optimization problem~\eqref{shapeOptim}. From Subsection~\ref{SectionSignorinicasscalairesansu}, note that the Signorini energy functional $\mathcal{J}$, given by~\eqref{energysigno}, can also be expressed as
$$
  \mathcal{J}(\Omega) = -\frac{1}{2}\int_{\Omega}\mathrm{A}\mathrm{e}\left(u_\Omega\right):\mathrm{e}\left(u_\Omega\right),
$$
for all~$\Omega \in \mathcal{U}_{\mathrm{ref}}$.

In the whole section let us fix~$\Omega_0 \in \mathcal{U}_{\mathrm{ref}}$. Our aim here is to prove that, under appropriate assumptions, the functional $\mathcal{J}$ is \textit{shape differentiable} at $\Omega_0$, in the sense that the map
$$
\begin{array}{rcl}
\mathcal{C}_{\mathrm{D}}^{2,\infty}(\R^{d},\R^{d})& \longrightarrow & \R \\
\theta & \longmapsto & \displaystyle \mathcal{J}((\mathrm{id}+\theta)(\Omega_0)),
\end{array}
$$
is Gateaux differentiable at~$0$, where $\mathcal{C}_{\mathrm{D}}^{2,\infty}(\R^{d},\R^{d})$ is defined by~\eqref{direc12}, and to give an expression of the Gateaux differential, denoted by~$\mathcal{J}'(\Omega_0)$, which is called the shape gradient of~$\mathcal{J}$ at~$\Omega_0$. For this purpose we have to perform the sensitivity analysis of the Signorini problem~\eqref{Signoriniproblem2221} with respect to the shape, and then to characterize the material and shape derivatives. 

This section is separated as follows. In Subsection~\ref{setting}, we perturb the Signorini problem with respect to the shape. In Subsection~\ref{sectionepidiff} we characterize the material derivative as solution to a variational inequality (see Theorem~\ref{materialderiv1}). Then, with additional regularity assumptions, we characterize the material and shape derivatives as being weak solutions to Signorini problems (see Corollaries~\ref{materialderiv2} and~\ref{shapederiv1}). Finally, in Subsection~\ref{energyfunctional}, we prove that the Signorini energy functional~$\mathcal{J}$ is shape differentiable at $\Omega_0$, and we provide an expression of its shape gradient (see Theorem~\ref{shapederivofJsigno1} and Corollary~\ref{shapederivofJ}).

\subsection{Setting of the shape perturbation}\label{setting}
 Consider $\theta\in \mathcal{C}_{\mathrm{D}}^{2,\infty}(\R^{d},\R^{d})$ and, for all $t\geq0$ sufficiently small such that~$\mathrm{id}+t\theta$ is a $\mathcal{C}^{2}$-diffeomorphism of $\R^{d}$, consider the shape perturbed Signorini problem given by
\begin{equation}\tag{SP$_{t}$}\label{PbperturbSignorini}
\arraycolsep=2pt
\left\{
\begin{array}{rcll}
-\mathrm{div}(\mathrm{A}\mathrm{e}(u_{t})) & = & f   & \text{ in } \Omega_t , \\
u_{t} & = & 0  & \text{ on } \Gamma_{\mathrm{D}} ,\\
\sigma_{\tau_{t}}(u_{t}) & = & 0  & \text{ on } \Gamma_{\mathrm{S}_t} ,\\
u_{t,\nn_t}\leq0\text{, } \sigma_{\nn_t}(u_{t})\leq0 \text{ and } u_{t,\nn_t}\sigma_{\nn_t}(u_{t}) & = & 0  & \text{ on } \Gamma_{\mathrm{S}_t},
\end{array}
\right.
\end{equation}
where $\Omega_{t}:=(\mathrm{id}+t\theta)(\Omega_{0})\in\mathcal{U}_{\mathrm{ref}}$, $\Gamma_{t}:= (\mathrm{id}+t\theta)(\Gamma_{0})$ and $\nn_t$ is the outward-pointing unit normal vector to $\Gamma_t$. From Subsection~\ref{SectionSignorinicasscalairesansu}, there
exists a unique solution $u_{t}\in\HH^1(\Omega_t,\R^d)$ to~\eqref{PbperturbSignorini} which satisfies
\begin{equation*}\label{weakkk}
\displaystyle\int_{\Omega_t}\mathrm{A}\mathrm{e}(u_{t}):\mathrm{e}(v-u_{t})\geq\int_{\Omega_t}f\cdot (v-u_{t}), \qquad \forall v\in\mathcal{K}^{1}(\Omega_t),
\end{equation*}
where 
$$
\mathcal{K}^{1}(\Omega_t):=\left\{ v\in\HH^1_{\mathrm{D}}(\Omega_t,\R^d) \mid v_{\nn_{t}}\leq0\text{ \textit{a.e.} on } \Gamma_{\mathrm{S}_t} \right\}.
$$
Following the usual strategy in shape optimization literature (see, e.g.,~\cite{ALL,HENROT}), using the change of variables $\mathrm{id}+t\theta$ and the equality $$
\nn_t\circ(\mathrm{id}+t\theta)=\frac{(\mathrm{I}+t\nabla{\theta}^{\top})^{-1}\nn}{\left\|(\mathrm{I}+t\nabla{\theta}^{\top})^{-1}\nn\right\|},
$$ 
where $\nn:=\nn_0$ (see, e.g.,~\cite[Chapter 2, Proposition 2.48 p.79]{SOKOZOL}) and $||\cdot||$ is the Euclidean norm on~$\R^d$, we prove that $\overline{u}_t:=u_t\circ(\mathrm{id}+t\theta)\in\mathrm{K}^{1}_{t}(\Omega_0)\subset\HH^1_{\mathrm{D}}(\Omega_0,\R^d)$ satisfies
\begin{equation}\label{inequalitytogetderivmat}
\displaystyle\int_{\Omega_0}\mathrm{J}_{t}\mathrm{A}\left[\nabla{\overline{u}_t}\left(\mathrm{I}+t\nabla{\theta}\right)^{-1}\right]:\nabla{\left(v-\overline{u}_t\right)}\left(\mathrm{I}+t\nabla{\theta}\right)^{-1}\geq\int_{\Omega_0}f_{t}\mathrm{J}_{t}\cdot\left(v-\overline{u}_t\right), \qquad \forall v\in\mathrm{K}^{1}_{t}(\Omega_0),
\end{equation}
where $\mathrm{K}^{1}_{t}(\Omega_0):=\{ v\in\HH^1_{\mathrm{D}}(\Omega_0,\R^d) \mid v\cdot(\mathrm{I}+t\nabla{\theta}^{\top})^{-1}\nn\leq0 \text{ \textit{a.e.} on } \Gamma_{\mathrm{S}_0} \}$, $f_{t}:=f\circ(\mathrm{id}+t\theta)\in\HH^{1}(\R^{d},\R^d)$ and~$\mathrm{J}_{t}:=\mathrm{det}(\mathrm{I}+t\nabla{\theta})\in\LL^{\infty}(\R^{d},\R)$ is the Jacobian. Thus, using the characterization of the proximal operator (see Definition~\ref{proxi}),  $\overline{u}_t$ can be expressed as
$$
\overline{u}_t=\mathrm{prox}_{\iota_{\mathrm{K}^{1}_{t}(\Omega_0)}}(F_t),
$$
where $F_{t}\in\HH^{1}(\Omega_{0},\R^d)$ is the unique solution to the parameterized variational equality
\begin{equation*}
\displaystyle\int_{\Omega_0}\mathrm{J}_{t}\mathrm{A}\left[\nabla{F_t}\left(\mathrm{I}+t\nabla{\theta}\right)^{-1}\right]:\nabla{v}\left(\mathrm{I}+t\nabla{\theta}\right)^{-1}=\int_{\Omega_0}f_{t}\mathrm{J}_{t}\cdot v, \qquad \forall v\in\HH^1_{\mathrm{D}}(\Omega_0,\R^d),
\end{equation*}
and $\mathrm{prox}_{\iota_{\mathrm{K}^{1}_{t}(\Omega_0)}}$ is the proximal operator associated with the indicator function $\iota_{\mathrm{K}^{1}_{t}(\Omega_0)}$
considered on the space $\HH^1_{\mathrm{D}}(\Omega_0,\R^d)$ endowed with the perturbed scalar product
$$(v_1,v_2)\in\left(\HH^1_{\mathrm{D}}(\Omega_0,\R^d)\right)^2\mapsto \int_{\Omega_0}\mathrm{J}_{t}\mathrm{A}\left[\nabla{v_1}\left(\mathrm{I}+t\nabla{\theta}\right)^{-1}\right]:\nabla{v_2}\left(\mathrm{I}+t\nabla{\theta}\right)^{-1}\in\R.
$$

Here, the main difficulty is that the indicator function $\iota_{\mathrm{K}^{1}_{t}(\Omega_0)}$ depends on the parameter $t$, thus it would required an extended notion of twice epi-differentiability
depending on a parameter in order to apply the Proposition~\ref{TheoABC2018}. Nevertheless, it is not necessary since, for all $v\in\mathrm{K}^{1}_{t}(\Omega_0)$, one has (similarly to~\cite[Chapter 5 p.111]{MAURY} and~\cite[Chapter 4 Section 4.6 p.205]{SOKOZOL})
$$
\left(\mathrm{I}+t\nabla{\theta}\right)^{-1}v\in\mathcal{K}^1(\Omega_0),
$$
and conversely, for all $\varphi\in\mathcal{K}^1(\Omega_0)$,
$$
(\mathrm{I}+t\nabla{\theta})\varphi\in\mathrm{K}^{1}_{t}(\Omega_0).
$$
 Thus, from Inequality~\eqref{inequalitytogetderivmat}, one proves that $\overline{\overline{u}}_t:=(\mathrm{I}+t\nabla{\theta})^{-1}\overline{u}_{t}\in\mathcal{K}^{1}(\Omega_{0})$  satisfies
\begin{multline}\label{firstsi}
\displaystyle\int_{\Omega_0}\mathrm{J}_{t}\mathrm{A}\left[\nabla{\left(\left(\mathrm{I}+t\nabla{\theta}\right)\overline{\overline{u}}_t\right)}\left(\mathrm{I}+t\nabla{\theta}\right)^{-1}\right]:\nabla{\left(\left(\mathrm{I}+t\nabla{\theta}\right)\left(\varphi-\overline{\overline{u}}_t\right)\right)}\left(\mathrm{I}+t\nabla{\theta}\right)^{-1}\\\geq\int_{\Omega_0}\left(\mathrm{I}+t\nabla{\theta}^{\top}\right)f_{t}\mathrm{J}_{t}\cdot\left(\varphi-\overline{\overline{u}}_t\right), \qquad \forall \varphi\in\mathcal{K}^{1}(\Omega_0),
\end{multline}
and can be expressed as
$$
\overline{\overline{u}}_t=\mathrm{prox}_{\iota_{\mathcal{K}^{1}(\Omega_0)}}(G_t),
$$
where $G_t\in\HH^{1}_{\mathrm{D}}(\Omega_0,\R^d)$ is the unique solution to the parameterized variational equality
\begin{multline*}
\int_{\Omega_0}\mathrm{J}_{t}\mathrm{A}\left[\nabla{\left(\left(\mathrm{I}+t\nabla{\theta}\right)G_t\right)}\left(\mathrm{I}+t\nabla{\theta}\right)^{-1}\right]:\nabla{\left(\left(\mathrm{I}+t\nabla{\theta}\right)\varphi\right)}\left(\mathrm{I}+t\nabla{\theta}\right)^{-1}\\=\int_{\Omega_0}\left(\mathrm{I}+t\nabla{\theta}^{\top}\right)f_{t}\mathrm{J}_{t}\cdot\varphi, \qquad \forall \varphi\in\HH^{1}_{\mathrm{D}}(\Omega_0,\R^d),
\end{multline*}
and $\mathrm{prox}_{\iota_{\mathcal{K}^{1}(\Omega_{0})}}$ is the proximal operator associated with the Signorini indicator function $\iota_{\mathcal{K}^{1}(\Omega_0)}$ considered on the perturbed Hilbert space $(\HH^{1}_{\mathrm{D}}(\Omega_0,\R^d),\dual{\cdot}{\cdot}_{t})$, where $\dual{\cdot}{\cdot}_t$ is the scalar product defined by
\begin{multline*}
    (v_1,v_2)\in\left(\HH^1_{\mathrm{D}}(\Omega_0,\R^d)\right)^2\mapsto\\ \int_{\Omega_0}\mathrm{J}_{t}\mathrm{A}\left[\nabla{\left(\left(\mathrm{I}+t\nabla{\theta}\right)v_1\right)}\left(\mathrm{I}+t\nabla{\theta}\right)^{-1}\right]:~\nabla{\left(\left(\mathrm{I}+t\nabla{\theta}\right)v_2\right)}\left(\mathrm{I}+t\nabla{\theta}\right)^{-1}\in\R.
\end{multline*}

The previous difficulty is solved since the Signorini indicator function $\iota_{\mathcal{K}^{1}(\Omega_0)}$ does not depend on the parameter $t\geq0$. Nevertheless, we face here to a perturbed Hilbert space due to the scalar product~$\dual{\cdot}{\cdot}_{t}$ that is~$t$-dependent, thus we could not apply Theorem~\ref{TheoABC2018}. To overcome this difficulty, let us rewrite Inequality~\eqref{firstsi} as (using the equality $\mathrm{B}:\mathrm{C}\mathrm{D}=\mathrm{B}\mathrm{D}^{\top}:\mathrm{C}$ for all~$\mathrm{B},\mathrm{C},\mathrm{D}\in\R^{d\times d}$)
\begin{multline*}
\displaystyle\int_{\Omega_0}\mathrm{J}_{t}\mathrm{A}\left[\nabla{\left(\left(\mathrm{I}+t\nabla{\theta}\right)\overline{\overline{u}}_t\right)}\left(\mathrm{I}+t\nabla{\theta}\right)^{-1}\right]\left(\mathrm{I}+t\nabla{\theta}^{\top}\right)^{-1}:\nabla{\left(\left(\mathrm{I}+t\nabla{\theta}\right)\left(\varphi-\overline{\overline{u}}_t\right)\right)}\\\geq\int_{\Omega_0}\left(\mathrm{I}+t\nabla{\theta}^{\top}\right)f_{t}\mathrm{J}_{t}\cdot\left(\varphi-\overline{\overline{u}}_t\right), \qquad \forall \varphi\in\mathcal{K}^{1}(\Omega_0),
\end{multline*}
then adding to both members $\dual{\overline{\overline{u}}_t}{\varphi-\overline{\overline{u}}_t}_{\HH^{1}_{\mathrm{D}}(\Omega_0,\R^{d})}$, one deduces that
\begin{multline*}
\displaystyle\dual{\overline{\overline{u}}_t}{\varphi-\overline{\overline{u}}_t}_{\HH^{1}_{\mathrm{D}}(\Omega_0,\R^{d})}\geq\int_{\Omega_0}\left(\mathrm{I}+t\nabla{\theta}^{\top}\right)f_{t}\mathrm{J}_{t}\cdot(\varphi-\overline{\overline{u}}_t)\\-\int_{\Omega_0}\left(\mathrm{J}_{t}\mathrm{A}\left[\nabla{\overline{\overline{u}}_t}\left(\mathrm{I}+t\nabla{\theta}\right)^{-1}\right]\left(\mathrm{I}+t\nabla{\theta}^{\top}\right)^{-1}-\mathrm{A}\nabla{\overline{\overline{u}}_t}\right):\nabla{\left(\varphi-\overline{\overline{u}}_t\right)}\\-t\int_{\Omega_0}\mathrm{J}_{t}\mathrm{A}\left[\nabla{\overline{\overline{u}}_t}\left(\mathrm{I}+t\nabla{\theta}\right)^{-1}\right]\left(\mathrm{I}+t\nabla{\theta}^{\top}\right)^{-1}:\nabla{\left(\nabla{\theta}\left(\varphi-\overline{\overline{u}}_t\right)\right)}\\-t\int_{\Omega_0}\mathrm{J}_{t}\mathrm{A}\left[\nabla{\left(\nabla{\theta}\overline{\overline{u}}_t\right)}\left(\mathrm{I}+t\nabla{\theta}\right)^{-1}\right]\left(\mathrm{I}+t\nabla{\theta}^{\top}\right)^{-1}:\nabla{\left(\varphi-\overline{\overline{u}}_t\right)}\\-t^2\int_{\Omega_0}\mathrm{J}_{t}\mathrm{A}\left[\nabla{\left(\nabla{\theta}\overline{\overline{u}}_t\right)}\left(\mathrm{I}+t\nabla{\theta}\right)^{-1}\right]\left(\mathrm{I}+t\nabla{\theta}^{\top}\right)^{-1}:\nabla{\left(\nabla{\theta}\left(\varphi-\overline{\overline{u}}_t\right)\right)}, \qquad \forall \varphi\in\mathcal{K}^{1}(\Omega_0).
\end{multline*}
Thus $\overline{\overline{u}}_t$ is also expressed as
\begin{equation*}\label{fonctionavant}
     \displaystyle \overline{\overline{u}}_t=\mathrm{prox}_{\iota_{\mathcal{K}^{1}(\Omega_0)}}(E_{t}),
\end{equation*}
where $E_{t}\in\HH^{1}_{\mathrm{D}}(\Omega_0,\R^d)$ stands for the unique solution to the parameterized variational equality
\begin{multline}\label{perturprob}
\displaystyle\dual{E_t}{\varphi}_{\HH^{1}_{\mathrm{D}}(\Omega_0,\R^{d})}=\int_{\Omega_0}\left(\mathrm{I}+t\nabla{\theta}^{\top}\right)f_{t}\mathrm{J}_{t}\cdot\varphi\\-\int_{\Omega_0}\left(\mathrm{J}_{t}\mathrm{A}\left[\nabla{\overline{\overline{u}}_t}\left(\mathrm{I}+t\nabla{\theta}\right)^{-1}\right]\left(\mathrm{I}+t\nabla{\theta}^{\top}\right)^{-1}-\mathrm{A}\nabla{\overline{\overline{u}}_t}\right):\nabla{\varphi}\\-t\int_{\Omega_0}\mathrm{J}_{t}\mathrm{A}\left[\nabla{\overline{\overline{u}}_t}\left(\mathrm{I}+t\nabla{\theta}\right)^{-1}\right]\left(\mathrm{I}+t\nabla{\theta}^{\top}\right)^{-1}:\nabla{\left(\nabla{\theta}\varphi\right)}\\-t\int_{\Omega_0}\mathrm{J}_{t}\mathrm{A}\left[\nabla{\left(\nabla{\theta}\overline{\overline{u}}_t\right)}\left(\mathrm{I}+t\nabla{\theta}\right)^{-1}\right]\left(\mathrm{I}+t\nabla{\theta}^{\top}\right)^{-1}:\nabla{\varphi}\\-t^2\int_{\Omega_0}\mathrm{J}_{t}\mathrm{A}\left[\nabla{\left(\nabla{\theta}\overline{\overline{u}}_t\right)}\left(\mathrm{I}+t\nabla{\theta}\right)^{-1}\right]\left(\mathrm{I}+t\nabla{\theta}^{\top}\right)^{-1}:\nabla{\left(\nabla{\theta}\varphi\right)}, \qquad \forall \varphi\in\HH^{1}_{\mathrm{D}}(\Omega_0,\R^d),
\end{multline}
and where $\mathrm{prox}_{\iota_{\mathcal{K}^{1}(\Omega_0)}}$ is the proximal operator associated with the Signorini indicator function~$\iota_{\mathcal{K}^{1}(\Omega_0)}$
considered on the Hilbert space $(\HH^{1}_{\mathrm{D}}(\Omega_0,\R^d),\dual{\cdot}{\cdot}_{\HH^{1}_{\mathrm{D}}(\Omega_0,\R^d)})$, which is~$t$-independent.

\begin{myRem}\normalfont
    Note that, for the parameter $t=0$, one has $E_0=F_0\in\HH^{1}_{\mathrm{D}}(\Omega_0,\R^d)$, which is the unique solution to the Dirichlet-Neumann problem 
    \begin{equation*}
    \arraycolsep=2pt
     \left\{
    \begin{array}{rcll}
    -\mathrm{div}(\mathrm{A}\mathrm{e}(F_0)) & = & f   & \text{ in } \Omega_0 , \\
    F_0 & = & 0  & \text{ on } \Gamma_{\mathrm{D}} ,\\
    \mathrm{A}\mathrm{e}(F_0)\nn & = & 0  & \text{ on } \Gamma_{\mathrm{S}_0}.
    \end{array}
    \right.
\end{equation*}
\end{myRem}

Now the next step is to derive the differentiability of the map $t\in\R_{+} \mapsto E_{t} \in \HH^{1}_{\mathrm{D}}(\Omega_0,\R^d)$ at~$t=0$. For this purpose let us recall first that (see, e.g.,~\cite{HENROT}):
\begin{enumerate}[label={\rm (\roman*)}]
    \item the map $t\in\R_{+} \mapsto \mathrm{J}_{t}\in\LL^{\infty}(\R^{d})$ is differentiable at $t=0$ with derivative given by~$\mathrm{div}(\theta)$;\label{diff11}
    \item the map $t\in\R_{+} \mapsto \left(\mathrm{I}+t\nabla{\theta}\right)^{-1}\in\LL^{\infty}(\R^{d},\R^{d \times d})$ is differentiable at $t=0$ with derivative given by~$-\nabla{\theta}$;\label{diff12}
    \item the map $t\in\R_{+} \mapsto \left(\mathrm{I}+t\nabla{\theta}^{\top}\right)^{-1}\in\LL^{\infty}(\R^{d},\R^{d \times d})$ is differentiable at $t=0$ with derivative given by~$-\nabla{\theta}^{\top}$;\label{diff13}
    \item the map $t\in\R_{+} \mapsto \left(\mathrm{I}+t\nabla{\theta}^{\top}\right)f_{t}\mathrm{J}_{t}\in\LL^{2}(\R^{d},\R^d)$ is differentiable at $t=0$ with derivative given by~$f\mathrm{div}(\theta)+\nabla{f}\theta+\nabla{\theta}^{\top}f$.\label{diff14}
\end{enumerate}

\begin{myLem}\label{lem876}
The map $t\in\R_{+} \mapsto E_{t} \in \HH^{1}_{\mathrm{D}}(\Omega_0,\R^d)$ is differentiable at $t=0$ and its derivative, denoted by $E'_{0} \in \HH^{1}_{\mathrm{D}}(\Omega_0,\R^d)$, is the unique solution to the variational equality given by
\begin{multline}\label{Woderov}
    \dual{E'_{0}}{\varphi}_{\HH^{1}_{\mathrm{D}}(\Omega_0,\R^{d})}=\int_{\Omega_{0}}\left(f\mathrm{div}(\theta)+\nabla{f}\theta+\nabla{\theta}^{\top}f\right)\cdot \varphi\\+\int_{\Omega_{0}}\left(\left(\mathrm{A}\mathrm{e}(u_0)\right)\nabla{\theta}^{\top}+\mathrm{A}\left(\nabla{u_0}\nabla{\theta}\right)-\mathrm{div}(\theta)\mathrm{A}\mathrm{e}(u_0)\right):\nabla{\varphi}\\-\int_{\Omega_0}\mathrm{A}\mathrm{e}(u_0):\mathrm{e}\left(\nabla{\theta}\varphi\right)-\int_{\Omega_0}\mathrm{A}\mathrm{e}\left(\nabla{\theta}u_0\right):\mathrm{e}\left(\varphi\right), \qquad\forall \varphi\in\HH^{1}_{\mathrm{D}}(\Omega_0,\R^{d}).
\end{multline}
\end{myLem}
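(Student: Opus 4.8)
The plan is to realize $E_t$ through the Riesz representation theorem applied to~\eqref{perturprob}: denoting by $L_t(\varphi)$ the right-hand side of~\eqref{perturprob}, the map $\varphi\mapsto L_t(\varphi)$ is a bounded linear functional on the ($t$-independent) Hilbert space $\HH^1_{\mathrm{D}}(\Omega_0,\R^d)$, so $E_t=\mathcal{R}(L_t)$ where $\mathcal{R}:(\HH^1_{\mathrm{D}}(\Omega_0,\R^d))^*\to\HH^1_{\mathrm{D}}(\Omega_0,\R^d)$ is the Riesz isometry. Since $\mathcal{R}$ is a linear isometry, the differentiability of $t\mapsto E_t$ at $t=0$ is \emph{equivalent} to the differentiability of $t\mapsto L_t$ at $t=0$ in the dual norm, the derivative $E'_0$ being the representative of the limit functional. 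Thus I would reduce the whole statement to proving $\frac1t(L_t-L_0)\to\ell$ in $(\HH^1_{\mathrm{D}}(\Omega_0,\R^d))^*$, where $\ell$ is the functional on the right-hand side of~\eqref{Woderov}, and then invoke Riesz once more for the existence and uniqueness of $E'_0$.

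To organize the computation I would split $L_t(\varphi)$ into the force term $\int_{\Omega_0}(\mathrm{I}+t\nabla{\theta}^{\top})f_t\mathrm{J}_t\cdot\varphi$ and the tensor remainder $-B_t(\overline{\overline{u}}_t,\varphi)$ collecting the last four integrals of~\eqref{perturprob}, viewed as a function of $t$ that depends on the parameter both explicitly (through $\mathrm{J}_t$, $(\mathrm{I}+t\nabla{\theta})^{-1}$, $(\mathrm{I}+t\nabla{\theta}^{\top})^{-1}$) and implicitly through $\overline{\overline{u}}_t$. Two structural observations drive everything. First, $B_0\equiv0$: at $t=0$ one has $\mathrm{J}_0=1$ and $(\mathrm{I}+t\nabla{\theta})^{-1}|_{t=0}=\mathrm{I}$, the explicit factors $t,t^2$ annihilate the last three integrals, and the first one collapses to $\mathrm{A}\nabla{w}-\mathrm{A}\nabla{w}=0$. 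Second, by the elementary facts~\ref{diff11}--\ref{diff13} the coefficient tensors are Lipschitz in $t$ near $0$ in $\LL^\infty$ with the value at $0$ that makes $B_0=0$; since $\theta\in\mathcal{C}^{2,\infty}_{\mathrm{D}}(\R^d,\R^d)$ controls $\nabla{(\nabla{\theta}\,\cdot)}$ in $\LL^2$, this yields the uniform bound $\abs{B_t(w,\varphi)}\leq Ct\norm{w}_{\HH^1_{\mathrm{D}}(\Omega_0,\R^d)}\norm{\varphi}_{\HH^1_{\mathrm{D}}(\Omega_0,\R^d)}$ for $t$ small. The consequence, which I regard as the conceptual heart of the argument, is that the would-be dependence of $E'_0$ on the derivative of $\overline{\overline{u}}_t$ disappears: by the product rule $\partial_t[B_t(\overline{\overline{u}}_t,\varphi)]|_{t=0}=\partial_tB_t(u_0,\varphi)|_{t=0}+B_0(\overline{\overline{u}}{}'_0,\varphi)=\partial_tB_t(u_0,\varphi)|_{t=0}$, so only the \emph{continuity}, not the differentiability, of $t\mapsto\overline{\overline{u}}_t$ will be required.

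I would then establish that $\overline{\overline{u}}_t\to u_0=\overline{\overline{u}}_0$ in $\HH^1_{\mathrm{D}}(\Omega_0,\R^d)$ by a self-consistent estimate. From $\overline{\overline{u}}_t=\mathrm{prox}_{\iota_{\mathcal{K}^{1}(\Omega_0)}}(E_t)$ and the nonexpansiveness of the proximal operator one gets $\norm{\overline{\overline{u}}_t-u_0}\leq\norm{E_t-E_0}$ and $\norm{\overline{\overline{u}}_t}\leq\norm{E_t-E_0}+\norm{E_0}+\norm{\mathrm{prox}_{\iota_{\mathcal{K}^{1}(\Omega_0)}}(0)}$. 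On the other hand $\norm{E_t-E_0}$ equals the dual norm of $L_t-L_0$, whose force part tends to $0$ in $\LL^2$ by continuity of the map in~\ref{diff14} and whose remainder is bounded by $Ct\norm{\overline{\overline{u}}_t}$ thanks to the $O(t)$ bound on $B_t$. Feeding the second inequality into the first gives $(1-Ct)\norm{E_t-E_0}\leq \epsilon_t+C't$ with $\epsilon_t\to0$, hence $\norm{E_t-E_0}\to0$ and therefore $\overline{\overline{u}}_t\to u_0$.

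It remains to identify the limit. Writing $B_t(\overline{\overline{u}}_t,\varphi)=B_t(u_0,\varphi)+B_t(\overline{\overline{u}}_t-u_0,\varphi)$, the second summand is $o(t)\norm{\varphi}$ uniformly over the unit ball by the continuity just proved together with the $O(t)$ bound, so it contributes nothing to the derivative. The first summand is a genuinely explicit function of $t$, and its difference quotient converges in dual norm to $\partial_tB_t(u_0,\cdot)|_{t=0}$ by~\ref{diff11}--\ref{diff13}; computing this derivative with the product rule and using the symmetry of $\mathrm{A}$ (so that $\mathrm{A}\nabla{w}=\mathrm{A}\mathrm{e}(w)$ and $\mathrm{A}\mathrm{e}(u_0):\nabla{\psi}=\mathrm{A}\mathrm{e}(u_0):\mathrm{e}(\psi)$) reproduces exactly the three tensor integrals of~\eqref{Woderov}. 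Combining with the $\LL^2$-derivative of the force term supplied by~\ref{diff14}, one obtains $\frac1t(L_t-L_0)\to\ell$ in the dual, whence $t\mapsto E_t$ is differentiable at $0$ with $E'_0=\mathcal{R}\ell$, the unique solution of~\eqref{Woderov}. The main obstacle throughout is precisely the implicit coupling $E_t\leftrightarrow\overline{\overline{u}}_t$, and it is resolved entirely by the identity $B_0=0$, which simultaneously decouples $E'_0$ from $\overline{\overline{u}}{}'_0$ and furnishes the $O(t)$ bound that powers the continuity estimate.
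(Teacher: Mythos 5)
Your proposal is correct, and its skeleton is the same as the paper's: realize $E_t$ through the Riesz isometry, reduce the differentiability of $t\mapsto E_t$ to dual-norm convergence of the difference quotients of the right-hand side of~\eqref{perturprob}, and reduce that in turn to the continuity of $t\mapsto\overline{\overline{u}}_t$ at $t=0$, the derivative of the explicit part being computed from items~\ref{diff11}--\ref{diff14} and the symmetry $\mathrm{A}\nabla{v}=\mathrm{A}\mathrm{e}(v)$. Where you genuinely differ is in how the continuity (and boundedness) of $\overline{\overline{u}}_t$ is obtained. The paper works directly with the variational formulations: it tests the formulation of $\overline{\overline{u}}_t$ with $u_0$ and that of $u_0$ with $\overline{\overline{u}}_t$ to get $\left\|\overline{\overline{u}}_t-u_0\right\|_{\HH^{1}_{\mathrm{D}}(\Omega_0,\R^{d})}\leq C\left(\left\|\left(\mathrm{I}+t\nabla{\theta}^{\top}\right)f_{t}\mathrm{J}_{t}-f\right\|_{\LL^2(\R^d,\R^d)}+\left\|\overline{\overline{u}}_t\right\|_{\HH^{1}_{\mathrm{D}}(\Omega_0,\R^{d})}\left(t+o(t)\right)\right)$, and then takes $\varphi=0$ in the formulation of $\overline{\overline{u}}_t$ to bound $\left\|\overline{\overline{u}}_t\right\|_{\HH^{1}_{\mathrm{D}}(\Omega_0,\R^{d})}$ for small $t$ by absorbing the $O(t)$ term. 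You instead exploit the prox structure itself: since $\mathrm{prox}_{\iota_{\mathcal{K}^{1}(\Omega_0)}}$ is taken with respect to the fixed scalar product (which is precisely the point of the $E_t$ construction), nonexpansiveness gives $\left\|\overline{\overline{u}}_t-u_0\right\|_{\HH^{1}_{\mathrm{D}}(\Omega_0,\R^{d})}\leq\left\|E_t-E_0\right\|_{\HH^{1}_{\mathrm{D}}(\Omega_0,\R^{d})}$ for free, and you close the loop by a bootstrap on $\left\|E_t-E_0\right\|_{\HH^{1}_{\mathrm{D}}(\Omega_0,\R^{d})}$ using the bound $\abs{B_t(w,\varphi)}\leq Ct\left\|w\right\|_{\HH^{1}_{\mathrm{D}}(\Omega_0,\R^{d})}\left\|\varphi\right\|_{\HH^{1}_{\mathrm{D}}(\Omega_0,\R^{d})}$. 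The two bootstraps are of the same nature, but your route never reopens the variational inequalities, and it isolates explicitly the two structural facts ($B_0\equiv0$ and the $O(t)$ bound on $B_t$) that the paper leaves implicit in its ``from linearity and using differentiability results'' step; your bilinear splitting $B_t(\overline{\overline{u}}_t,\cdot)=B_t(u_0,\cdot)+B_t(\overline{\overline{u}}_t-u_0,\cdot)$ also makes transparent, without any appeal to differentiability of $\overline{\overline{u}}_t$, why $E'_0$ is decoupled from $\overline{\overline{u}}'_0$. What the paper's version buys in exchange is self-containedness at the level of the variational formulations (it never needs the Lipschitz property of the proximal operator, only the Riesz theorem and elementary estimates); what yours buys is a cleaner, more conceptual argument that reuses the convex-analytic machinery the paper has already set up.
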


\begin{proof}
Using the Riesz representation theorem, we denote by $Z \in\HH^{1}_{\mathrm{D}}(\Omega_0,\R^d)$ the unique solution to the above variational equality~\eqref{Woderov}. From linearity and using differentiability results~\ref{diff11},~\ref{diff12}, \ref{diff13}, \ref{diff14}, one gets
\begin{multline*}
    \left\|\frac{E_{t}-E_{0}}{t}-Z\right\|_{\HH^{1}_{\mathrm{D}}(\Omega_0,\R^{d})}\leq \\C(\Omega_0, d, \mathrm{A}, \theta) \biggl(\left\|\frac{\left(\mathrm{I}+t\nabla{\theta}^{\top}\right)f_{t}\mathrm{J}_{t}-f}{t}-\left(f\mathrm{div}(\theta)+\nabla{f}\theta+\nabla{\theta}^{\top}f\right)\right\|_{\LL^2(\R^d,\R^d)}\\+\left\|\overline{\overline{u}}_t-u_0\right\|_{\HH^{1}_{\mathrm{D}}(\Omega_0,\R^{d})}+\frac{o(t)}{t} \left\|\overline{\overline{u}}_t\right\|_{\HH^{1}_{\mathrm{D}}(\Omega_0,\R^{d})}\biggl),
\end{multline*}
for all $t\geq0$ sufficiently small, where $C(\Omega_0,\mathrm{A},\theta,d)>0$ is a constant which depends on~$\Omega_0,\mathrm{A},\theta,d$, and where $o$ stands for the standard Bachmann–Landau notation, with $\frac{|o(t)|}{t}\rightarrow 0$ when~$t\rightarrow0^{+}$.  Therefore, to conclude the proof, we only need to prove the continuity of the map $t\in\R_{+} \mapsto \overline{\overline{u}}_t\in~\HH^{1}_{\mathrm{D}}(\Omega_0,\R^{d})$ at $t=0$. For this purpose let us take $\varphi=u_{0}$ in the variational formulation of $\overline{\overline{u}}_t$ and~$\varphi=\overline{\overline{u}}_t$ in the variational formulation of $u_{0}$ to get that
\begin{multline*}
    \left\|\overline{\overline{u}}_t-u_0\right\|_{\HH^{1}_{\mathrm{D}}(\Omega_0,\R^{d})}\leq\\ C(\Omega_0,\mathrm{A},\theta,d)\left(\left\|\left(\mathrm{I}+t\nabla{\theta}^{\top}\right)f_{t}\mathrm{J}_{t}-f\right\|_{\LL^2(\R^d,\R^d)}+\left\|\overline{\overline{u}}_t\right\|_{\HH^{1}_{\mathrm{D}}(\Omega_0,\R^{d})}\left(t+o(t)\right)\right),
\end{multline*}
for all $t\geq0$ sufficiently small. Then, to conclude the proof, we only need to prove that the map~$t\in\R_{+} \mapsto\left\|\overline{\overline{u}}_t\right\|_{\HH^{1}_{\mathrm{D}}(\Omega_0,\R^{d})} \in\R$ is bounded for $t\geq0$ sufficiently small. Let us take $\varphi=0$ in the variational formulation of $\overline{\overline{u}}_t$ to get that
\begin{multline*}
\left\|\overline{\overline{u}}_t\right\|^2_{\HH^{1}_{\mathrm{D}}(\Omega_0,\R^{d})}\leq C(\Omega_0,\mathrm{A},\theta,d)\left\|\left(\mathrm{I}+t\nabla{\theta}^{\top}\right)f_{t}\mathrm{J}_{t}\right\|_{\LL^2(\R^d,\R^d)}\left\|\overline{\overline{u}}_t\right\|_{\HH^{1}_{\mathrm{D}}(\Omega_0,\R^{d})}\\+C(\Omega_0,\mathrm{A},\theta,d)\left\|\overline{\overline{u}}_t\right\|^2_{\HH^{1}_{\mathrm{D}}(\Omega_0,\R^{d})}\left(t+o(t)\right),
\end{multline*}
for all $t\geq0$ sufficiently small. Thus, one deduces
$$
\left\|\overline{\overline{u}}_t\right\|_{\HH^{1}_{\mathrm{D}}(\Omega_0,\R^{d})}\leq \frac{C(\Omega_0,\mathrm{A},\theta,d)(\left\|\left(\mathrm{I}+t\nabla{\theta}^{\top}\right)f_{t}\mathrm{J}_{t}\right\|_{\LL^2(\R^d,\R^d)}}{1-C(\Omega_0,\mathrm{A},\theta,d)\left(t+o(t)\right)},
$$
for all $t\geq0$ sufficiently small, and using the continuity of the map $t\in\R_{+} \mapsto (\mathrm{I}+t\nabla{\theta}^{\top})f_{t}\mathrm{J}_{t}\in\LL^{2}(\R^{d},\R^d)$, one concludes the proof.
\end{proof}

\subsection{Twice epi-differentiability, material and shape derivatives}\label{sectionepidiff}

Consider the framework of Subsection~\ref{setting}. Our goal in this subsection is to characterize the derivative of the map~$t\in\R_{+} \mapsto \overline{\overline{u}}_{t} \in \HH^{1}_{\mathrm{D}}(\Omega_0,\R^{d})$ at~$t=0$, denoted $\overline{\overline{u}}'_{0}\in\HH^{1}_{\mathrm{D}}(\Omega_0,\R^{d})$, then to characterize the material derivative $\overline{u}'_0$ (that is the derivative of the map~$t\in\R_{+} \mapsto \overline{u}_{t} \in \HH^{1}_{\mathrm{D}}(\Omega_0,\R^{d})$ at~$t=0$), and finally to conclude with an expression of the shape directional derivative defined by~$u'_{0}:=\overline{u}'_0-\nabla{u_0}\theta$ (which roughly corresponds to the derivative of the map~$t\in\R_{+} \mapsto u_{t} \in \HH^{1}(\Omega_t,\R^d)$ at~$t=0$).

To this aim, our idea is to use Proposition~\ref{TheoABC2018}. Since we have expressed $\overline{\overline{u}}_t=\mathrm{prox}_{\iota_{\mathcal{K}^{1}(\Omega_0)}}(E_{t})$ and characterized in Lemma~\ref{lem876} the derivative of the map~$t\in\R_{+} \mapsto E_{t} \in \HH^{1}_{\mathrm{D}}(\Omega_0,\R^{d})$ at~$t=0$, the next step is to investigate the twice epi-differentiability of the Signorini indicator function~$\iota_{\mathcal{K}^{1}(\Omega_0)}$ at~$u_0\in\mathcal{K}^{1}(\Omega_0)$ for $E_0-u_0\in\mathrm{N}_{\mathcal{K}^{1}(\Omega_0)}(u_0)$, where $\mathrm{N}_{\mathcal{K}^{1}(\Omega_0)}(u_0)$ is the normal cone to~$\mathcal{K}^{1}(\Omega_0)$ at~$u_0$ (see Definition~\ref{conenormall}) which coincides with the subdifferential of $\iota_{\mathcal{K}^{1}(\Omega_0)}$ at $u_0$ (see Example~\ref{ExamNormal}). For this purpose, we will use Lemma~\ref{epipoly} that characterizes the twice epi-differentiability of an indicator function, but it requires that the set~$\mathcal{K}^{1}(\Omega_0)$ is polyhedric (see Definition~\ref{polyset}). In~\cite[Lemma 5.2.9 p.116]{MAURY} the author proves that~$\mathcal{K}^{1}(\Omega_0)$ is polyhedric using some concepts from convex analysis such as the tangential cone (see Definition~\ref{tangentcone}), and also from capacity theory (see Appendix~\ref{capacity} for some reminders and, in particular, Definition~\ref{quasieve} for the notion of \textit{quasi everywhere}).

\begin{myLem}
The nonempty closed convex subset $\mathcal{K}^{1}(\Omega_0)$ of $\HH^{1}_{\mathrm{D}}(\Omega_0,\R^{d})$ is polyhedric at $u_0\in\mathcal{K}^{1}(\Omega_0)$ for $E_0-u_0\in\mathrm{N}_{\mathcal{K}^{1}(\Omega_0)}(u_0)$, and one has
\begin{multline*}
\mathrm{T}_{\mathcal{K}^{1}(\Omega_0)}(u_0)\cap\left(\R\left(E_0-u_0\right)\right)^{\perp}\\=\left\{ \varphi\in\HH^{1}_{\mathrm{D}}(\Omega_0,\R^d) \mid \varphi_\nn\leq0 \text{ \textit{q.e.} on } \Gamma^{{u_0}_{\nn}}_{\mathrm{S}_0} \text{ and } \dual{E_0-u_0}{\varphi}_{\HH^{1}_{\mathrm{D}}(\Omega_0,\R^d)}=0\right\},
\end{multline*}
where $\mathrm{T}_{\mathcal{K}^{1}(\Omega_0)}(u_0)$ is the tangent cone to $\mathcal{K}^{1}(\Omega_0)$ at $u_0$, $\Gamma^{{u_0}_{\nn}}_{\mathrm{S}_0}:=\left\{s\in\Gamma_{\mathrm{S}_0} \mid {u_0}_{\nn}(s)=0 \right\}$ and \textit{q.e.} means quasi everywhere.
\end{myLem}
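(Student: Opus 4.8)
The plan is to reduce everything to two facts about the set $\mathcal{K}^{1}(\Omega_0)$ — the identification of its tangent cone at $u_0$ and its polyhedricity — and then simply to intersect with the hyperplane $(\R(E_0-u_0))^{\perp}$.

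First I would check that the pair $(u_0,E_0-u_0)$ is admissible for Definition~\ref{polyset}, i.e.\ that $E_0-u_0\in\mathrm{N}_{\mathcal{K}^{1}(\Omega_0)}(u_0)$. Recall from Subsection~\ref{setting} that $u_0=\prox_{\iota_{\mathcal{K}^{1}(\Omega_0)}}(E_0)$; hence the characterization $\prox_{\phi}=(\mathrm{I}+\partial\phi)^{-1}$ of Definition~\ref{proxi} gives $E_0-u_0\in\partial\iota_{\mathcal{K}^{1}(\Omega_0)}(u_0)$, which coincides with $\mathrm{N}_{\mathcal{K}^{1}(\Omega_0)}(u_0)$ by Example~\ref{ExamNormal}. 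Equivalently, since $\prox_{\iota_{\mathcal{K}^{1}(\Omega_0)}}=\proj_{\mathcal{K}^{1}(\Omega_0)}$ (Remark~\ref{projetprox}), this is just the variational characterization of the projection. Throughout, all trace conditions are to be read on the quasi-continuous representatives (see Appendix~\ref{capacity} and Definition~\ref{quasieve}), so that the coincidence set $\Gamma^{{u_0}_{\nn}}_{\mathrm{S}_0}=\{s\in\Gamma_{\mathrm{S}_0}\mid {u_0}_{\nn}(s)=0\}$ is defined up to a set of zero capacity and the constraint $\varphi_\nn\leq0$ \textit{q.e.} on it is meaningful.

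The heart of the argument is the tangent cone. Here I would invoke~\cite[Lemma 5.2.9 p.116]{MAURY}, which — relying on Mignot's polyhedricity theory (see~\cite{MIGNOT}) together with capacity estimates for the normal trace — establishes both that $\mathcal{K}^{1}(\Omega_0)$ is polyhedric and that
$$
\mathrm{T}_{\mathcal{K}^{1}(\Omega_0)}(u_0)=\left\{\varphi\in\HH^{1}_{\mathrm{D}}(\Omega_0,\R^d)\mid \varphi_\nn\leq0 \text{ \textit{q.e.} on } \Gamma^{{u_0}_{\nn}}_{\mathrm{S}_0}\right\}.
$$
The easy half is that the radial directions $\{\varphi\mid \exists\lambda>0,\ u_0+\lambda\varphi\in\mathcal{K}^{1}(\Omega_0)\}$ lie in the right-hand side: on the coincidence set ${u_0}_{\nn}=0$, so $(u_0+\lambda\varphi)_{\nn}\leq0$ forces $\varphi_\nn\leq0$ there, and a \textit{q.e.} sign condition on the quasi-continuous trace cuts out a closed convex cone, hence the inclusion survives the closure defining the tangent cone. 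The delicate half is the reverse inclusion, which needs a capacity-theoretic truncation argument to approximate an admissible $\varphi$ by radial directions; this is exactly the part I would import from~\cite{MAURY,MIGNOT}.

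Granting these two facts, the conclusion is immediate: since $(\R(E_0-u_0))^{\perp}=\{\varphi\mid \dual{E_0-u_0}{\varphi}_{\HH^{1}_{\mathrm{D}}(\Omega_0,\R^d)}=0\}$, intersecting it with the tangent cone yields
\begin{multline*}
\mathrm{T}_{\mathcal{K}^{1}(\Omega_0)}(u_0)\cap\left(\R\left(E_0-u_0\right)\right)^{\perp}\\=\left\{\varphi\in\HH^{1}_{\mathrm{D}}(\Omega_0,\R^d)\mid \varphi_\nn\leq0 \text{ \textit{q.e.} on } \Gamma^{{u_0}_{\nn}}_{\mathrm{S}_0} \text{ and } \dual{E_0-u_0}{\varphi}_{\HH^{1}_{\mathrm{D}}(\Omega_0,\R^d)}=0\right\},
\end{multline*}
as claimed. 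I expect the main obstacle to be the reverse tangent-cone inclusion and the polyhedricity themselves: both hinge on the $\HH^{1/2}$/capacity behaviour of the normal trace on $\Gamma_{\mathrm{S}_0}$ — the coincidence set being defined only up to capacity and the normal element $E_0-u_0$ behaving like a nonnegative measure concentrated on it — which is precisely why the quasi-everywhere framework, rather than an almost-everywhere one, is indispensable.
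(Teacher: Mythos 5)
Your proposal is correct and follows essentially the same route as the paper, which likewise states this lemma without a detailed proof and delegates both the polyhedricity of $\mathcal{K}^{1}(\Omega_0)$ and the identification of $\mathrm{T}_{\mathcal{K}^{1}(\Omega_0)}(u_0)$ to~\cite[Lemma 5.2.9 p.116]{MAURY} (building on Mignot's capacity-theoretic arguments), after which intersecting with $\left(\R\left(E_0-u_0\right)\right)^{\perp}$ is immediate. Your explicit verification that $E_0-u_0\in\mathrm{N}_{\mathcal{K}^{1}(\Omega_0)}(u_0)$ via $u_0=\mathrm{prox}_{\iota_{\mathcal{K}^{1}(\Omega_0)}}(E_0)$ and Example~\ref{ExamNormal} is a detail the paper only uses implicitly, and it is correct.
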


Since $\mathcal{K}^{1}(\Omega_0)$ is polyhedric at $u_0\in\mathcal{K}^{1}(\Omega_0)$ for $E_0-u_0\in\mathrm{N}_{\mathcal{K}^{1}(\Omega_0)}(u_0)$, one can now deduce from Lemma~\ref{epipoly} that $\iota_{\mathcal{K}^{1}(\Omega_0)}$ is twice epi-differentiable.

\begin{myLem}\label{epidiffindicsi}
 The Signorini indicator function $\iota_\mathrm{\mathcal{K}^{1}(\Omega_0)}$ is twice epi-differentiable at $u_0\in\mathcal{K}^{1}(\Omega_0)$ for $E_0-u_0\in\mathrm{N}_{\mathcal{K}^{1}(\Omega_0)}(u_0)$ and
$$
\mathrm{d}_{e}^{2}\iota_{\mathcal{K}^{1}(\Omega_0)}(u_0|E_0-u_0)=\mathrm{\iota}_{\mathrm{T}_{\mathcal{K}^{1}(\Omega_0)}(u_0)\cap\left(\R\left(E_0-u_0\right)\right)^{\perp}}.
$$
\end{myLem}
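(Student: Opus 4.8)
The plan is to recognize that this lemma is nothing but a direct specialization of the abstract result Lemma~\ref{epipoly} to the concrete Hilbert space $\mathcal{H}=\HH^{1}_{\mathrm{D}}(\Omega_0,\R^d)$ (equipped with the energy scalar product $\dual{\cdot}{\cdot}_{\HH^{1}_{\mathrm{D}}(\Omega_0,\R^d)}$), taken with $\mathrm{C}=\mathcal{K}^{1}(\Omega_0)$, $x=u_0$ and $y=E_0-u_0$. Consequently, the whole task reduces to verifying that every hypothesis of Lemma~\ref{epipoly} is satisfied; once this is done, the conclusion, together with the explicit description of $\mathrm{T}_{\mathcal{K}^{1}(\Omega_0)}(u_0)\cap(\R(E_0-u_0))^{\perp}$ furnished by the preceding lemma, follows immediately.

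First I would recall that $\mathcal{K}^{1}(\Omega_0)$ is a nonempty closed convex subset of $\HH^{1}_{\mathrm{D}}(\Omega_0,\R^d)$ and that $u_0\in\mathcal{K}^{1}(\Omega_0)$, since $u_0$ is the unique solution to the Signorini problem~\eqref{Signoriniproblem2221}. Next I would confirm the membership $E_0-u_0\in\mathrm{N}_{\mathcal{K}^{1}(\Omega_0)}(u_0)$: recalling from Subsection~\ref{setting} that at $t=0$ one has $\overline{\overline{u}}_0=u_0=\mathrm{prox}_{\iota_{\mathcal{K}^{1}(\Omega_0)}}(E_0)$, the characterization $\mathrm{prox}_{\phi}=(\mathrm{I}+\partial\phi)^{-1}$ from Definition~\ref{proxi} yields $E_0-u_0\in\partial\iota_{\mathcal{K}^{1}(\Omega_0)}(u_0)$, which coincides with $\mathrm{N}_{\mathcal{K}^{1}(\Omega_0)}(u_0)$ by Example~\ref{ExamNormal}. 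The remaining and decisive hypothesis, namely that $\mathcal{K}^{1}(\Omega_0)$ is polyhedric at $u_0$ for $E_0-u_0$, is exactly the statement of the previous lemma, hence is at our disposal.

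Having checked all the hypotheses, I would then simply invoke Lemma~\ref{epipoly} to obtain that $\iota_{\mathcal{K}^{1}(\Omega_0)}$ is twice epi-differentiable at $u_0$ for $E_0-u_0$, with $\mathrm{d}_{e}^{2}\iota_{\mathcal{K}^{1}(\Omega_0)}(u_0|E_0-u_0)=\iota_{\mathrm{T}_{\mathcal{K}^{1}(\Omega_0)}(u_0)\cap(\R(E_0-u_0))^{\perp}}$, which is precisely the asserted identity.

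I would be candid that there is no genuine analytic obstacle in this particular statement: all the difficulty has been absorbed into the preceding lemma, where the polyhedricity of $\mathcal{K}^{1}(\Omega_0)$ had to be established through capacity-theoretic arguments and an explicit computation of the tangent cone. The present lemma is therefore essentially a bookkeeping step that repackages that result through the abstract machinery of Lemma~\ref{epipoly}. The only point deserving a little care is to ensure that the normal cone, the tangent cone, and the notion of twice epi-differentiability are all understood with respect to the \emph{same} energy inner product on $\HH^{1}_{\mathrm{D}}(\Omega_0,\R^d)$, so that the abstract lemma transfers verbatim to this concrete setting.
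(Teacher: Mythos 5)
Your proof is correct and matches the paper's own route exactly: the paper likewise treats this lemma as an immediate consequence of Lemma~\ref{epipoly}, applied with $\mathcal{H}=\HH^{1}_{\mathrm{D}}(\Omega_0,\R^d)$, $\mathrm{C}=\mathcal{K}^{1}(\Omega_0)$, $x=u_0$, $y=E_0-u_0$, once the polyhedricity of $\mathcal{K}^{1}(\Omega_0)$ at $u_0$ for $E_0-u_0$ has been established in the preceding lemma. Your verification of the normal-cone membership via $u_0=\mathrm{prox}_{\iota_{\mathcal{K}^{1}(\Omega_0)}}(E_0)$ and Example~\ref{ExamNormal} is exactly the justification the paper gives in the discussion preceding the lemma.
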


The twice epi-differentiability of the Signorini indicator function allows us to apply Proposition~\ref{TheoABC2018} in order to state and prove the first main result of this paper that characterizes the material derivative.

\begin{myTheorem}[Material derivative]\label{materialderiv1}
The map
$t\in\R_{+} \mapsto \overline{u}_{t} \in \HH^{1}_{\mathrm{D}}(\Omega_0,\R^d)$ is differentiable at~$t=0$, and its derivative~$\overline{u}'_{0} \in\mathrm{T}_{\mathcal{K}^{1}(\Omega_0)}(u_0)\cap\left(\R\left(E_0-u_0\right)\right)^{\perp}+\nabla{\theta}u_0$ is the unique solution to the variational inequality
\begin{multline}\label{inequalityofmaterialderiv1}
\dual{\overline{u}'_{0}}{\varphi-\overline{u}'_{0}}_{\HH^{1}_{\mathrm{D}}(\Omega_0,\R^{d})}\geq-\int_{\Omega_0}\mathrm{div}\left(\mathrm{div}\left(\mathrm{A}\mathrm{e}(u_0)\right)\theta^{\top}\right)\cdot\left(\varphi-\overline{u}'_0\right)\\+\int_{\Omega_0}\left(\left(\mathrm{A}\mathrm{e}(u_0)\right)\nabla{\theta}^{\top}+\mathrm{A}\left(\nabla{u_0}\nabla{\theta}\right)-\mathrm{div}(\theta)\mathrm{A}\mathrm{e}(u_0)\right):\nabla{\left(\varphi-\overline{u}'_{0}\right)}\\-\dual{\mathrm{A}\mathrm{e}(u_0)\nn}{\nabla{\theta}\left(\varphi-\overline{u}'_{0}\right)}_{\HH^{-1/2}(\Gamma_0,\R^d)\times\HH^{1/2}(\Gamma_0,\R^d)},
\end{multline}
for all $\varphi\in\mathrm{T}_{\mathcal{K}^{1}(\Omega_0)}(u_0)\cap\left(\R\left(E_0-u_0\right)\right)^{\perp}+\nabla{\theta}u_0$,
where
\begin{multline*}
    \mathrm{T}_{\mathcal{K}^{1}(\Omega_0)}(u_0)\cap\left(\R\left(E_0-u_0\right)\right)^{\perp}+\nabla{\theta}u_0=\\ \left\{ \varphi\in\HH^{1}_{\mathrm{D}}(\Omega_0,\R^d) \mid \varphi_\nn\leq\left(\nabla{\theta}u_0\right)_\nn \text{ \textit{q.e.} on } \Gamma^{{u_0}_{\nn}}_{\mathrm{S}_0} \text{ and } \dual{E_0-u_0}{\varphi-\nabla{\theta}u_0}_{\HH^{1}_{\mathrm{D}}(\Omega_0,\R^d)}=0\right\}.    
\end{multline*}
\end{myTheorem}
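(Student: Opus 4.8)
The plan is to apply Proposition~\ref{TheoABC2018} to the family $\overline{\overline{u}}_t=\mathrm{prox}_{\iota_{\mathcal{K}^{1}(\Omega_0)}}(E_t)$ and then to transfer the resulting derivative back through the two changes of variables. First I would take $\Phi:=\iota_{\mathcal{K}^{1}(\Omega_0)}$ and $F(t):=E_t$ in Proposition~\ref{TheoABC2018}: its two hypotheses are met by Lemma~\ref{lem876} (differentiability of $t\mapsto E_t$ at $t=0$) and by Lemma~\ref{epidiffindicsi} (twice epi-differentiability of $\iota_{\mathcal{K}^{1}(\Omega_0)}$ at $u_0$ for $E_0-u_0$), recalling that $\overline{\overline{u}}_0=u_0$ and $E_0-u_0\in\mathrm{N}_{\mathcal{K}^{1}(\Omega_0)}(u_0)=\partial\iota_{\mathcal{K}^{1}(\Omega_0)}(u_0)$. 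This gives that $t\mapsto\overline{\overline{u}}_t$ is differentiable at $t=0$ with
$$
\overline{\overline{u}}'_0=\mathrm{prox}_{\mathrm{d}_{e}^{2}\iota_{\mathcal{K}^{1}(\Omega_0)}(u_0|E_0-u_0)}(E'_0),
$$
which, by Lemma~\ref{epidiffindicsi} and Remark~\ref{projetprox}, equals $\mathrm{proj}_{\mathrm{T}_{\mathcal{K}^{1}(\Omega_0)}(u_0)\cap(\R(E_0-u_0))^{\perp}}(E'_0)$.

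Next I would recover the material derivative from the identity $\overline{u}_t=(\mathrm{I}+t\nabla{\theta})\overline{\overline{u}}_t$. Differentiating this product at $t=0$, using that $t\mapsto\mathrm{I}+t\nabla{\theta}$ has derivative $\nabla{\theta}$ at $t=0$ and equals $\mathrm{I}$ there, yields that $t\mapsto\overline{u}_t$ is differentiable at $t=0$ with $\overline{u}'_0=\overline{\overline{u}}'_0+\nabla{\theta}u_0$. As $\overline{\overline{u}}'_0$ is the projection of $E'_0$ onto the closed convex cone $\mathrm{T}_{\mathcal{K}^{1}(\Omega_0)}(u_0)\cap(\R(E_0-u_0))^{\perp}$, this places $\overline{u}'_0$ in the translated set $\mathrm{T}_{\mathcal{K}^{1}(\Omega_0)}(u_0)\cap(\R(E_0-u_0))^{\perp}+\nabla{\theta}u_0$, whose explicit description is the one furnished by the preceding polyhedricity lemma.

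It then remains to rewrite the projection as the variational inequality~\eqref{inequalityofmaterialderiv1}. I would start from the obtuse-angle characterization $\dual{\overline{\overline{u}}'_0}{\psi-\overline{\overline{u}}'_0}_{\HH^{1}_{\mathrm{D}}(\Omega_0,\R^d)}\geq\dual{E'_0}{\psi-\overline{\overline{u}}'_0}_{\HH^{1}_{\mathrm{D}}(\Omega_0,\R^d)}$ valid for every $\psi$ in the cone, and substitute $\psi=\varphi-\nabla{\theta}u_0$ and $\overline{\overline{u}}'_0=\overline{u}'_0-\nabla{\theta}u_0$, so that $\psi-\overline{\overline{u}}'_0=\varphi-\overline{u}'_0$ and the inequality becomes $\dual{\overline{u}'_0}{\varphi-\overline{u}'_0}_{\HH^{1}_{\mathrm{D}}(\Omega_0,\R^d)}\geq\dual{E'_0+\nabla{\theta}u_0}{\varphi-\overline{u}'_0}_{\HH^{1}_{\mathrm{D}}(\Omega_0,\R^d)}$ for all $\varphi$ in the translated set. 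Inserting the formula for $E'_0$ from Lemma~\ref{lem876} with test field $\varphi-\overline{u}'_0$ and using $\dual{\nabla{\theta}u_0}{\cdot}_{\HH^{1}_{\mathrm{D}}(\Omega_0,\R^d)}=\int_{\Omega_0}\mathrm{A}\mathrm{e}(\nabla{\theta}u_0):\mathrm{e}(\cdot)$, the two contributions $\mp\int_{\Omega_0}\mathrm{A}\mathrm{e}(\nabla{\theta}u_0):\mathrm{e}(\varphi-\overline{u}'_0)$ cancel, and the term $\int_{\Omega_0}((\mathrm{A}\mathrm{e}(u_0))\nabla{\theta}^{\top}+\mathrm{A}(\nabla{u_0}\nabla{\theta})-\mathrm{div}(\theta)\mathrm{A}\mathrm{e}(u_0)):\nabla(\varphi-\overline{u}'_0)$ is already the middle term of~\eqref{inequalityofmaterialderiv1}. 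What is left to convert is $\int_{\Omega_0}(f\mathrm{div}(\theta)+\nabla{f}\theta+\nabla{\theta}^{\top}f)\cdot(\varphi-\overline{u}'_0)-\int_{\Omega_0}\mathrm{A}\mathrm{e}(u_0):\mathrm{e}(\nabla{\theta}(\varphi-\overline{u}'_0))$.

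I expect this last conversion to be the main obstacle, and it splits into two pieces. Applying the divergence formula (Proposition~\ref{div}) to the tensor $\mathrm{A}\mathrm{e}(u_0)\in\HH_{\mathrm{div}}(\Omega_0,\R^{d\times d})$ against the field $\nabla{\theta}(\varphi-\overline{u}'_0)$ turns $-\int_{\Omega_0}\mathrm{A}\mathrm{e}(u_0):\mathrm{e}(\nabla{\theta}(\varphi-\overline{u}'_0))$ into $\int_{\Omega_0}\mathrm{div}(\mathrm{A}\mathrm{e}(u_0))\cdot\nabla{\theta}(\varphi-\overline{u}'_0)-\dual{\mathrm{A}\mathrm{e}(u_0)\nn}{\nabla{\theta}(\varphi-\overline{u}'_0)}_{\HH^{-1/2}(\Gamma_0,\R^d)\times\HH^{1/2}(\Gamma_0,\R^d)}$, and the boundary term is exactly the duality term of~\eqref{inequalityofmaterialderiv1} with the correct sign. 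For the volume contributions I would invoke the state equation $f=-\mathrm{div}(\mathrm{A}\mathrm{e}(u_0))$ together with $\mathrm{div}(\mathrm{A}\mathrm{e}(u_0))\cdot(\nabla{\theta}\psi)=(\nabla{\theta}^{\top}\mathrm{div}(\mathrm{A}\mathrm{e}(u_0)))\cdot\psi=-(\nabla{\theta}^{\top}f)\cdot\psi$, which cancels the $\nabla{\theta}^{\top}f$ term, and the pointwise identity $\mathrm{div}(\mathrm{div}(\mathrm{A}\mathrm{e}(u_0))\theta^{\top})=\nabla(\mathrm{div}(\mathrm{A}\mathrm{e}(u_0)))\theta+\mathrm{div}(\mathrm{A}\mathrm{e}(u_0))\mathrm{div}(\theta)=-\nabla{f}\theta-f\mathrm{div}(\theta)$, so that the surviving volume integral is $-\int_{\Omega_0}\mathrm{div}(\mathrm{div}(\mathrm{A}\mathrm{e}(u_0))\theta^{\top})\cdot(\varphi-\overline{u}'_0)$. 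The delicate point throughout is the low regularity: $\mathrm{A}\mathrm{e}(u_0)\nn$ lies only in $\HH^{-1/2}(\Gamma_0,\R^d)$, so every boundary term must be read as a duality pairing and the integration by parts justified through Proposition~\ref{div}, while one must also record the regularity ensuring $\mathrm{div}(\mathrm{div}(\mathrm{A}\mathrm{e}(u_0))\theta^{\top})\in\LL^2(\Omega_0,\R^d)$. Uniqueness in~\eqref{inequalityofmaterialderiv1} is then immediate, since the inequality characterizes the projection of a fixed element onto a nonempty closed convex subset of $\HH^{1}_{\mathrm{D}}(\Omega_0,\R^d)$.
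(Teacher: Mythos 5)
Your proposal is correct and follows essentially the same route as the paper's proof: apply Proposition~\ref{TheoABC2018} to $\overline{\overline{u}}_t=\mathrm{prox}_{\iota_{\mathcal{K}^{1}(\Omega_0)}}(E_t)$ via Lemmas~\ref{lem876} and~\ref{epidiffindicsi}, differentiate $\overline{u}_t=(\mathrm{I}+t\nabla\theta)\overline{\overline{u}}_t$ to get $\overline{u}'_0=\overline{\overline{u}}'_0+\nabla\theta u_0$, and convert the projection characterization into~\eqref{inequalityofmaterialderiv1} using Lemma~\ref{lem876}, the divergence formula of Proposition~\ref{div}, and the state equation $\mathrm{div}(\mathrm{A}\mathrm{e}(u_0))=-f$. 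The only cosmetic differences (using the obtuse-angle characterization of the projection instead of the prox definition, and performing the volume/boundary-term algebra in mirror order) do not change the argument.
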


\begin{proof}
For all $t\geq0$, $\overline{\overline{u}}_t\in\HH^{1}_{\mathrm{D}}(\Omega_0,\R^{d})$ is given by    
$$
\overline{\overline{u}}_t=\mathrm{prox}_{\iota_{\mathcal{K}^{1}(\Omega_0)}}(E_{t}),
$$
where $E_{t}\in\HH^{1}(\Omega_{0},\R^d)$ stands for the unique solution to the parameterized variational equality~\eqref{perturprob}.
Moreover the map $t\in\R_{+} \mapsto E_{t} \in \HH^{1}_{\mathrm{D}}(\Omega_0,\R^d)$ is differentiable at $t=0$ with its derivative~$E'_0~\in\HH^{1}_{\mathrm{D}}(\Omega_0,\R^{d})$ solution to the variational equality~\eqref{Woderov}. Therefore, from Lemma~\ref{epidiffindicsi} one can apply Proposition~\ref{TheoABC2018} to deduce the differentiability of the map $t\in\R_{+} \mapsto \overline{\overline{u}}_t \in \HH^{1}_{\mathrm{D}}(\Omega_0,\R^{d})$, with its derivative~$\overline{\overline{u}}'_0\in\HH^{1}_{\mathrm{D}}(\Omega_0,\R^{d})$ given by
$$
\overline{\overline{u}}'_0=\mathrm{prox}_{\mathrm{d}_{e}^{2}\iota_{\mathcal{K}^{1}(\Omega_0)}(u_0|E_0-u_0)}(E'_{0}).
$$
Since $\overline{u}_{t}=\left(\mathrm{I}+t\nabla{\theta}\right)\overline{\overline{u}}_t$, then one deduces that $$
\overline{u}'_{0}=\mathrm{prox}_{\mathrm{d}_{e}^{2}\iota_{\mathcal{K}^{1}(\Omega_0)}(u_0|E_0-u_0)}(E'_{0})+\nabla{\theta}u_0,
$$
which, from definition of the proximal operator (see~Definition~\ref{proxi}), leads to
\begin{multline*}  \dual{E'_0-\overline{u}'_0+\nabla{\theta}u_0}{v-\overline{u}'_0 +\nabla{\theta}u_0 }_{\HH^{1}_{\mathrm{D}}(\Omega_0,\R^{d})}\leq \\\mathrm{d}_{e}^{2}\iota_{\mathcal{K}^{1}(\Omega_0)}(u_0|E_0-u_0)(v)-\mathrm{d}_{e}^{2}\iota_{\mathcal{K}^{1}(\Omega_0)}(u_0|E_0-u_0)(\overline{u}'_0-\nabla{\theta}u_0),
\end{multline*}
for all $v\in\HH^{1}_{\mathrm{D}}(\Omega_0,\R^{d})$, i.e.,
$$
\dual{E'_0-\overline{u}'_0+\nabla{\theta}u_0}{\varphi-\overline{u}'_0 }_{\HH^{1}_{\mathrm{D}}(\Omega_0,\R^{d})}\leq 0,
$$
for all $\varphi\in\mathrm{T}_{\mathcal{K}^{1}(\Omega_0)}(u_0)\cap\left(\R\left(E_0-u_0\right)\right)^{\perp}+\nabla{\theta}u_0$. Using the variational equality satisfied by $E'_0$ (see~\eqref{Woderov}), one gets
\begin{multline*}
  \dual{\overline{u}'_0-\nabla{\theta}u_0}{\varphi-\overline{u}'_0 }_{\HH^{1}_{\mathrm{D}}(\Omega_0,\R^{d})}\geq \int_{\Omega_{0}}\left(f\mathrm{div}(\theta)+\nabla{f}\theta+\nabla{\theta}^{\top}f\right)\cdot\left(\varphi-\overline{u}'_0\right)\\+\int_{\Omega_{0}}\left(\left(\mathrm{A}\mathrm{e}(u_0)\right)\nabla{\theta}^{\top}+\mathrm{A}\left(\nabla{u_0}\nabla{\theta}\right)-\mathrm{div}(\theta)\mathrm{A}\mathrm{e}(u_0)\right):\nabla{\left(\varphi-\overline{u}'_0\right)}\\-\int_{\Omega_0}\mathrm{A}\mathrm{e}(u_0):\mathrm{e}\left(\nabla{\theta}\left(\varphi-\overline{u}'_0\right)\right)-\int_{\Omega_0}\mathrm{A}\mathrm{e}\left(\nabla{\theta}u_0\right):\mathrm{e}\left(\varphi-\overline{u}'_0\right),  
\end{multline*}
for all $\varphi\in\mathrm{T}_{\mathcal{K}^{1}(\Omega_0)}(u_0)\cap\left(\R\left(E_0-u_0\right)\right)^{\perp}+\nabla{\theta}u_0$, which is also (see the notations introduced at the beginning of Section~\ref{BVP})
\begin{multline*}
\dual{\overline{u}'_0-\nabla{\theta}u_0}{\varphi-\overline{u}'_0 }_{\HH^{1}_{\mathrm{D}}(\Omega_0,\R^{d})}\geq \int_{\Omega_{0}}\mathrm{div}(f\theta^{\top})\cdot\left(\varphi-\overline{u}'_0\right)+\int_{\Omega_0}f\cdot\nabla{\theta}\left(\varphi-\overline{u}'_0\right)\\+\int_{\Omega_{0}}\left(\left(\mathrm{A}\mathrm{e}(u_0)\right)\nabla{\theta}^{\top}+\mathrm{A}\left(\nabla{u_0}\nabla{\theta}\right)-\mathrm{div}(\theta)\mathrm{A}\mathrm{e}(u_0)\right):\nabla{\left(\varphi-\overline{u}'_0\right)}\\-\int_{\Omega_0}\mathrm{A}\mathrm{e}(u_0):\mathrm{e}\left(\nabla{\theta}\left(\varphi-\overline{u}'_0\right)\right)-\int_{\Omega_0}\mathrm{A}\mathrm{e}\left(\nabla{\theta}u_0\right):\mathrm{e}\left(\varphi-\overline{u}'_0\right),
\end{multline*}
for all $\varphi\in\mathrm{T}_{\mathcal{K}^{1}(\Omega_0)}(u_0)\cap\left(\R\left(E_0-u_0\right)\right)^{\perp}+\nabla{\theta}u_0$. Using the divergence formula (see Proposition~\ref{div}) and the equality~$\mathrm{div}\left(\mathrm{A}\mathrm{e}(u_0)\right)=-f$ in $\HH^1(\Omega_0,\R^d)$, we obtain the result.
\end{proof}

In~\cite{CHAUDET2},~\cite[Chapter 5 Section 5.2 p.111]{MAURY} and~\cite[Chapter 4 Section 4.6 p.205]{SOKOZOL}, the authors get a similar result using the conical differentiability of the projection operator. Since $\mathcal{K}^{1}(\Omega_0)$ is polyhedric at $u_0\in\mathcal{K}^{1}(\Omega_0)$ for $E_0-u_0\in\mathrm{N}_{\mathcal{K}^{1}(\Omega_0)}(u_0)$, then from Mignot's theorem (see~\cite{MIGNOT}) the projection operator on~$\mathcal{K}^{1}(\Omega_0)$ is conically differentiable at~$u_0$ for~$E_0-u_0$, and its conical derivative is given by~$\mathrm{proj}_{\mathrm{T}_{\mathcal{K}^{1}(\Omega_0)}(u_0)\cap\left(\R\left(E_0-u_0\right)\right)^{\perp}}(E'_{0})$, which is exactly $\mathrm{prox}_{\mathrm{\iota}_{\mathrm{T}_{\mathrm{\mathcal{K}^{1}(\Omega_0)}}(u_0)\cap\left(\R \left(E_0-u_0\right)\right)^{\perp}}}(E'_{0})$. Nevertheless, to the best of our knowledge, it has not been observed in the literature that it was possible to improve this result under additional assumptions, in order to characterize the material derivative as weak solution to a boundary value problem. Indeed, as mentioned in~\cite{CHAUDET2,HINTERMULLERLAURAIN}, it is possible to replace \textit{q.e.} in the set $\mathrm{T}_{\mathcal{K}^{1}(\Omega_0)}(u_0)\cap\left(\R\left(E_0-u_0\right)\right)^{\perp}$ by \textit{\textit{a.e.}} under some hypotheses, like, for instance, if $\mathrm{\Gamma^{{u_0}_{\nn}}_{\mathrm{S}_0}}=\overline{\mathrm{int}_{\Gamma_{\mathrm{S}_0}}(\mathrm{\Gamma^{{u_0}_{\nn}}_{\mathrm{S}_0}})}$. Moreover, if we assume that the decomposition~$\Gamma_{\mathrm{D}}\cup\Gamma_{\mathrm{S}_0}$ of~$\Gamma_0$ is consistent (see Definition~\ref{regulieresens2} with $\Gamma_{\mathrm{S}_{\mathrm{S}}}:=\Gamma_{\mathrm{S}_0}$ and $w=0$) and some regularity assumptions on $u_0$ and $\theta$, then the material derivative can be characterized as weak solution to a Signorini problem.

\begin{myCor}\label{materialderiv2}
Assume that the decomposition $\Gamma_{\mathrm{D}}\cup\Gamma_{\mathrm{S}_0}$ of $\Gamma_0$ is consistent,~$u_{0}\in\HH^{3}(\Omega_{0},\R^d)$ and~$\mathrm{\Gamma^{{u_0}_{\nn}}_{\mathrm{S}_0}}=\overline{\mathrm{int}_{\Gamma_{\mathrm{S}_0}}(\mathrm{\Gamma^{{u_0}_{\nn}}_{\mathrm{S}_0}})}$. Then the material derivative
$\overline{u}'_0\in\mathrm{T}_{\mathcal{K}^{1}(\Omega_0)}(u_0)\cap\left(\R\left(E_0-u_0\right)\right)^{\perp}+\nabla{\theta}u_0$ is the unique weak solution to the Signorini problem
\begin{equation*}
\footnotesize
{\arraycolsep=2pt
\left\{
\begin{array}{rcll}
-\mathrm{div}(\mathrm{A}\mathrm{e}(\overline{u}'_0)) & = & -\mathrm{div}(\mathrm{A}\mathrm{e}(\nabla{u_0}\theta))   & \text{ in } \Omega_0 , \\
\overline{u}'_0 & = & 0  & \text{ on } \Gamma_{\mathrm{D}} ,\\
\sigma_{\tau}(\overline{u}'_0) & = & h^m(\theta)_{\tau}  & \text{ on } \Gamma_{\mathrm{S}_0} ,\\
 \sigma_{\nn}(\overline{u}'_0) & = & h^m(\theta)_{\nn}  & \text{ on } \Gamma_{\mathrm{S}^{{u_0}_{\nn}}_{0,\mathrm{N}}} ,\\
{\overline{u}'_{0}}_{\nn}& = & \left(\nabla{\theta}u_0\right)_{\nn}  & \text{ on } \Gamma_{\mathrm{S}^{{u_0}_{\nn}}_{0,\mathrm{D}}} ,\\
{\overline{u}'_{0}}_{\nn}\leq\left(\nabla{\theta}u_0\right)_{\nn}, \sigma_{\nn}(\overline{u}'_0)\leq h^m(\theta)_{\nn} \text{ and } \left( {\overline{u}'_{0}}_{\nn}-\left(\nabla{\theta}u_0\right)_{\nn}\right)\left(\sigma_{\nn}(\overline{u}'_0)-h^m(\theta)_{\nn}\right) & = & 0  & \text{ on } \Gamma_{\mathrm{S}^{{u_0}_{\nn}}_{0,\mathrm{S}}},
\end{array}
\right.}
\end{equation*}
where $h^m(\theta):=((\mathrm{A}\mathrm{e}(u_0))\nabla{\theta}^{\top}+\mathrm{A}(\nabla{u_0}\nabla{\theta})-\sigma_{\nn}(u_0)(\mathrm{div}(\theta)\mathrm{I}+\nabla{\theta}^{\top}))\nn\in\LL^2(\Gamma_{\mathrm{S}_0},\R^d)$, and~$\Gamma_{\mathrm{S}_0}$ is decomposed, up to a null set, as $\Gamma_{\mathrm{S}^{{u_0}_{\nn}}_{0,\mathrm{N}}}\cup\Gamma_{\mathrm{S}^{{u_0}_{\nn}}_{0,\mathrm{D}}}\cup\Gamma_{\mathrm{S}^{{u_0}_{\nn}}_{0,\mathrm{S}}}$, where
$$
\begin{array}{l}
\Gamma_{\mathrm{S}^{{u_0}_{\nn}}_{0,\mathrm{N}}}:=\left\{s\in\Gamma_{\mathrm{S}_0} \mid  {u_{0}}_{\nn}(s)\neq 0 \right \}, \\
\Gamma_{\mathrm{S}^{{u_0}_{\nn}}_{0,\mathrm{D}}}:=\left\{s\in\Gamma_{\mathrm{S}_0} \mid  {u_{0}}_{\nn}(s) = 0 \text{ and } \sigma_{\nn}(u_0)(s) < 0\right \}, \\
\Gamma_{\mathrm{S}^{{u_0}_{\nn}}_{0,\mathrm{S}}}:=\left\{s\in\Gamma_{\mathrm{S}_0} \mid  {u_{0}}_{\nn}(s) = 0 \text{ and } \sigma_{\nn}(u_0)(s)= 0\right \}.
\end{array}
$$

\end{myCor}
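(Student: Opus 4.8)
The plan is to recognise the variational inequality~\eqref{inequalityofmaterialderiv1} obtained in Theorem~\ref{materialderiv1} as exactly the weak formulation of the Signorini problem stated in the corollary, and then to conclude by uniqueness of its weak solution. Two matchings must be carried out: that of the admissible convex set and that of the linear form on the right-hand side. As a preliminary step I would use the standing assumptions: since $u_0\in\HH^3(\Omega_0,\R^d)$, the stress trace $\mathrm{A}\mathrm{e}(u_0)\nn$ belongs to $\LL^2(\Gamma_{\mathrm{S}_0},\R^d)$, so that the consistency of the decomposition $\Gamma_{\mathrm{D}}\cup\Gamma_{\mathrm{S}_0}$ lets me invoke Proposition~\ref{EquiSignorini} and deduce that $u_0$ is a \emph{strong} solution to~\eqref{Signoriniproblem2221}. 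In particular the Signorini conditions $u_{0,\nn}\leq0$, $\sigma_{\nn}(u_0)\leq0$ and $u_{0,\nn}\sigma_{\nn}(u_0)=0$ hold \textit{a.e.}\ on $\Gamma_{\mathrm{S}_0}$, and $\sigma_\tau(u_0)=0$ there; this is what makes the partition of $\Gamma_{\mathrm{S}_0}$ into its Neumann part $\Gamma_{\mathrm{S}^{{u_0}_{\nn}}_{0,\mathrm{N}}}$, Dirichlet part $\Gamma_{\mathrm{S}^{{u_0}_{\nn}}_{0,\mathrm{D}}}$ and Signorini part $\Gamma_{\mathrm{S}^{{u_0}_{\nn}}_{0,\mathrm{S}}}$ well defined up to a null set.

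For the admissible set, I would first replace, in the description of $\mathrm{T}_{\mathcal{K}^{1}(\Omega_0)}(u_0)\cap(\R(E_0-u_0))^{\perp}+\nabla{\theta}u_0$, the pointwise sign condition holding \textit{q.e.}\ on $\Gamma^{{u_0}_{\nn}}_{\mathrm{S}_0}$ by the same condition holding \textit{a.e.}; this substitution is licit precisely under the hypothesis $\Gamma^{{u_0}_{\nn}}_{\mathrm{S}_0}=\overline{\mathrm{int}_{\Gamma_{\mathrm{S}_0}}(\Gamma^{{u_0}_{\nn}}_{\mathrm{S}_0})}$, which one argues through capacity theory (see Appendix~\ref{capacity}) as in~\cite{CHAUDET2,HINTERMULLERLAURAIN}. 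Next I would decode the orthogonality constraint $\dual{E_0-u_0}{\varphi-\nabla{\theta}u_0}_{\HH^{1}_{\mathrm{D}}(\Omega_0,\R^d)}=0$. Since $u_0$ is a strong solution, an integration by parts (Proposition~\ref{div}) together with $-\mathrm{div}(\mathrm{A}\mathrm{e}(u_0))=f$ and $\dual{E_0}{\psi}_{\HH^{1}_{\mathrm{D}}(\Omega_0,\R^d)}=\int_{\Omega_0}f\cdot\psi$ gives $\dual{E_0-u_0}{\psi}_{\HH^{1}_{\mathrm{D}}(\Omega_0,\R^d)}=-\int_{\Gamma_{\mathrm{S}_0}}\sigma_{\nn}(u_0)\,\psi_\nn$ for every $\psi\in\HH^1_{\mathrm{D}}(\Omega_0,\R^d)$, where $\sigma_{\nn}(u_0)\leq0$ vanishes off $\Gamma^{{u_0}_{\nn}}_{\mathrm{S}_0}$. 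Reading this with $\psi=\varphi-\nabla{\theta}u_0$: on the Neumann part $\sigma_{\nn}(u_0)=0$, so nothing is imposed; on the Signorini part $\sigma_{\nn}(u_0)=0$ as well, leaving only the inequality $\varphi_\nn\leq(\nabla{\theta}u_0)_\nn$; and on the Dirichlet part, where $\sigma_{\nn}(u_0)<0$ strictly while $\varphi_\nn-(\nabla{\theta}u_0)_\nn\leq0$, the vanishing of the integral of this nonnegative integrand forces $\varphi_\nn=(\nabla{\theta}u_0)_\nn$ \textit{a.e.}\ there. Hence the admissible set coincides with the convex set of the target Signorini problem associated with the obstacle data $w=\nabla{\theta}u_0$.

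It remains to identify the linear form $L(\psi)$ appearing on the right-hand side of~\eqref{inequalityofmaterialderiv1}, with $\psi:=\varphi-\overline{u}'_0$, as
\[
L(\psi)=\int_{\Omega_0}\ell(\theta)\cdot\psi+\int_{\Gamma_{\mathrm{S}_0}}h^m(\theta)\cdot\psi,
\]
where $\ell(\theta)=-\mathrm{div}(\mathrm{A}\mathrm{e}(\nabla{u_0}\theta))$. Both sides being linear in $\psi$, this is a pointwise identity to be produced by integration by parts. Concretely I would apply the divergence formula (Proposition~\ref{div}) to the volume term $\int_{\Omega_0}((\mathrm{A}\mathrm{e}(u_0))\nabla{\theta}^{\top}+\mathrm{A}(\nabla{u_0}\nabla{\theta})-\mathrm{div}(\theta)\mathrm{A}\mathrm{e}(u_0)):\nabla{\psi}$, rewrite the first term via $\mathrm{div}(\mathrm{A}\mathrm{e}(u_0))=-f$, and combine the resulting boundary contributions with the duality pairing $\dual{\mathrm{A}\mathrm{e}(u_0)\nn}{\nabla{\theta}\psi}_{\HH^{-1/2}(\Gamma_0,\R^d)\times\HH^{1/2}(\Gamma_0,\R^d)}$. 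Here the regularity $u_0\in\HH^3(\Omega_0,\R^d)$ is essential: it ensures $\mathrm{A}\mathrm{e}(\nabla{u_0}\theta)\in\HH^1(\Omega_0,\R^{d\times d})$, hence $\ell(\theta)\in\LL^2(\Omega_0,\R^d)$, and it turns every $\HH^{-1/2}/\HH^{1/2}$ pairing into a genuine $\LL^2(\Gamma_{\mathrm{S}_0})$ integral, so that all surface terms can be collected. Matching the volume parts yields $\ell(\theta)$, while matching the surface parts, using $\mathrm{A}\mathrm{e}(u_0)\nn=\sigma_{\nn}(u_0)\nn$ on $\Gamma_{\mathrm{S}_0}$ and the identity $\mathrm{B}:\mathrm{C}\mathrm{D}=\mathrm{B}\mathrm{D}^{\top}:\mathrm{C}$, reproduces $h^m(\theta)$ in the announced form.

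Once both identifications are in place, the inequality~\eqref{inequalityofmaterialderiv1} is precisely the weak formulation of the Signorini problem of the statement (with data $k=\ell(\theta)$, $h=h^m(\theta)$, obstacle $w=\nabla{\theta}u_0$ and decomposition $\Gamma_{\mathrm{S}_0}=\Gamma_{\mathrm{S}^{{u_0}_{\nn}}_{0,\mathrm{N}}}\cup\Gamma_{\mathrm{S}^{{u_0}_{\nn}}_{0,\mathrm{D}}}\cup\Gamma_{\mathrm{S}^{{u_0}_{\nn}}_{0,\mathrm{S}}}$); since that problem has a unique weak solution and $\overline{u}'_0$ solves it, the two coincide. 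I expect the heaviest step to be the integration-by-parts bookkeeping in the third paragraph, where one must scrupulously separate volume from surface contributions and check, using the $\HH^3$ regularity, that each surface term is a well-defined $\LL^2(\Gamma_{\mathrm{S}_0})$ function before regrouping them into the single datum $h^m(\theta)$. The conceptually delicate point, by contrast, is the reading of the orthogonality constraint, where the \emph{strict} negativity of $\sigma_{\nn}(u_0)$ on the Dirichlet part is exactly what upgrades the inequality into an equality there, together with the \textit{q.e.}-to-\textit{a.e.}\ passage underpinned by the interior-density hypothesis.
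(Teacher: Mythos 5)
Your proposal is correct and follows essentially the same route as the paper's proof: establish that $u_0$ is a strong solution via consistency and the $\HH^{3}$ regularity, pass from \textit{q.e.} to \textit{a.e.} using the interior-density hypothesis, decode the orthogonality constraint through $\dual{E_0-u_0}{\psi}_{\HH^{1}_{\mathrm{D}}(\Omega_0,\R^d)}=-\int_{\Gamma_{\mathrm{S}_0}}\sigma_{\nn}(u_0)\psi_{\nn}$ with the strict negativity of $\sigma_{\nn}(u_0)$ forcing the equality constraint on $\Gamma_{\mathrm{S}^{{u_0}_{\nn}}_{0,\mathrm{D}}}$, integrate by parts to produce $h^m(\theta)$ and the source $-\mathrm{div}(\mathrm{A}\mathrm{e}(\nabla{u_0}\theta))$, and conclude by uniqueness of the weak solution from Subsection~\ref{SectionSignorinicasscalairesansu}. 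The only differences are cosmetic (you treat the convex set before the linear form, and your volume-term matching implicitly contains the paper's explicit identity $\mathrm{div}(\mathrm{A}\mathrm{e}(\nabla{u_0}\theta))=\mathrm{div}(\mathrm{div}(\mathrm{A}\mathrm{e}(u_0))\theta^{\top}+(\mathrm{A}\mathrm{e}(u_0))\nabla{\theta}^{\top}+\mathrm{A}(\nabla{u_0}\nabla{\theta})-\mathrm{div}(\theta)\mathrm{A}\mathrm{e}(u_0))$), so there is nothing substantive to add.
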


\begin{proof}
Since $u_{0}\in\HH^{2}(\Omega_{0},\R^d)$ and $\theta \in \mathcal{C}_{\mathrm{D}}^{2,\infty}(\R^{d},\R^{d})$, one deduces that 
    $$
\mathrm{div}\left(\left(\mathrm{A}\mathrm{e}(u_0)\right)\nabla{\theta}^{\top}+\mathrm{A}\left(\nabla{u_0}\nabla{\theta}\right)-\mathrm{div}(\theta)\mathrm{A}\mathrm{e}(u_0)\right)\in\LL^2(\Omega_0,\R^d).
$$
Moreover, since $\mathrm{A}\mathrm{e}(u_0)\nn\in\LL^2(\Gamma_0,\R^d)$ and that the decomposition~$\Gamma_{\mathrm{D}}\cup\Gamma_{\mathrm{S}_0}$ of $\Gamma_0$ is consistent, then $u_0$ is a (strong) solution to the Signorini problem~\eqref{PbperturbSignorini} for the parameter $t=0$ (see Proposition~\ref{EquiSignorini}).
Thus $\sigma_\tau(u_0)=0$ \textit{a.e.} on $\Gamma_{\mathrm{S}_0}$, and using the divergence formula (see Proposition~\ref{div}) in Inequality~\eqref{inequalityofmaterialderiv1}, we get that
\begin{multline}\label{formulderivmatH2}
\dual{\overline{u}'_0}{\varphi-\overline{u}'_0 }_{\HH^{1}_{\mathrm{D}}(\Omega_0,\R^d)}\geq \int_{\Gamma_{\mathrm{S}_0}}h^{m}(\theta)\cdot\left(\varphi-\overline{u}'_0\right)\\ -\int_{\Omega_0}\mathrm{div}\left(\mathrm{div}(\mathrm{A}\mathrm{e}(u_0))\theta^{\top}+\left(\mathrm{A}\mathrm{e}(u_0)\right)\nabla{\theta}^{\top}+\mathrm{A}\left(\nabla{u_0}\nabla{\theta}\right)-\mathrm{div}(\theta)\mathrm{A}\mathrm{e}(u_0)\right)\cdot\left(\varphi-\overline{u}'_0\right),
\end{multline}
for all $\varphi \in \mathrm{T}_{\mathcal{K}^{1}(\Omega_0)}(u_0)\cap\left(\R\left(E_0-u_0\right)\right)^{\perp}+\nabla{\theta}u_0$. Furthermore, one has $\mathrm{div}\left(\mathrm{A}\mathrm{e}\left(\nabla{u_0}\theta\right)\right)\in\LL^2(\Omega_0,\R^d)$ from $u_0\in\HH^3(\Omega_0,\R^d)$. Thus, using the equality
$$
\mathrm{div}\left(\mathrm{A}\mathrm{e}\left(\nabla{u_0}\theta\right)\right)=\mathrm{div}\left(\mathrm{div}(\mathrm{A}\mathrm{e}(u_0))\theta^{\top}+\left(\mathrm{A}\mathrm{e}(u_0)\right)\nabla{\theta}^{\top}+\mathrm{A}\left(\nabla{u_0}\nabla{\theta}\right)-\mathrm{div}(\theta)\mathrm{A}\mathrm{e}(u_0)\right),
$$
in $\LL^2(\Omega_0,\R^d)$, it follows that
\begin{equation*}
\dual{\overline{u}'_0}{\varphi-\overline{u}'_0 }_{\HH^{1}_{\mathrm{D}}(\Omega_0,\R^d)}\geq -\int_{\Omega_0}\mathrm{div}\left(\mathrm{A}\mathrm{e}\left(\nabla{u_0}\theta\right)\right)\cdot\left(\varphi-\overline{u}'_0\right)
+
\int_{\Gamma_{\mathrm{S}_0}}h^{m}(\theta)\cdot\left(\varphi-\overline{u}'_0\right),
\end{equation*}
for all $\varphi \in \mathrm{T}_{\mathcal{K}^{1}(\Omega_0)}(u_0)\cap\left(\R\left(E_0-u_0\right)\right)^{\perp}+\nabla{\theta}u_0$. Moreover, since
$$\mathcal{H}:=\left\{ v_\nn\in\HH^{1/2}(\Gamma_{\mathrm{S_0}},\R) \mid v\in\HH^1_{\mathrm{D}}(\Omega_0,\R^d) \right\},
$$
is a Dirichlet space (see Example~\ref{DirichletSpace}), then, for all $v\in\HH^1_{\mathrm{D}}(\Omega_0,\R^d)$, $v_\nn\in\HH^{1/2}(\Gamma_{\mathrm{S}},\R)$ admits a unique quasi-continuous representative for the \textit{q.e} equivalence relation (see Proposition~\ref{uniquerepres}), thus it follows that (see~\cite[Remark 3.12 p.13]{CHAUDET2} for details)
\begin{multline*}
\mathrm{T}_{\mathcal{K}^{1}(\Omega_0)}(u_0)\cap\left(\R\left(E_0-u_0\right)\right)^{\perp}\\=\left\{ \varphi\in\HH^{1}_{\mathrm{D}}(\Omega_0,\R^d) \mid \varphi_\nn\leq0 \text{ \textit{a.e.} on } \Gamma^{{u_0}_{\nn}}_{\mathrm{S}_0} \text{ and } \dual{E_0-u_0}{\varphi}_{\HH^{1}_{\mathrm{D}}(\Omega_0,\R^d)}=0\right\}.
\end{multline*}
Furthermore, since $u_0$ is a (strong) solution, it follows from the Signorini unilateral conditions that
$$
\dual{E_0-u_0}{\varphi}_{\HH^{1}_{\mathrm{D}}(\Omega_0,\R^d)}=\int_{\Gamma_0}\mathrm{A}\mathrm{e}(E_0-u_0)\nn\cdot \varphi=-\int_{\Gamma_{\mathrm{S}_0}}\sigma_{\nn}(u_0)\varphi_{\nn}=-\int_{\Gamma_{\mathrm{S}^{{u_0}_{\nn}}_{0,\mathrm{D}}}}\sigma_{\nn}(u_0)\varphi_{\nn}=0,
$$
 for all $\varphi\in\mathrm{T}_{\mathcal{K}^{1}(\Omega_0)}(u_0)\cap\left(\R\left(E_0-u_0\right)\right)^{\perp}$, and since $\sigma_{\nn}(u_0)\varphi_{\nn}\leq0$ \textit{a.e.} on $\Gamma_{\mathrm{S}^{{u_0}_{\nn}}_{0,\mathrm{D}}}$ and $\sigma_{\nn}(u_0)<0$ \textit{a.e.} on~$\Gamma_{\mathrm{S}^{{u_0}_{\nn}}_{0,\mathrm{D}}}$, one deduces that $\varphi_{\nn}=0$ \textit{a.e.} on~$\Gamma_{\mathrm{S}^{{u_0}_{\nn}}_{0,\mathrm{D}}}$. From Subsection~\ref{SectionSignorinicasscalairesansu}, one concludes the proof.
\end{proof}

\begin{myRem}\normalfont
If $\Gamma_0$ is smooth, the best known regularity result guarantees that $u_0 \in \mathcal{C}^{1,1/2}_{\rm loc}$ (see~\cite{ANDER,SCHU}). Nevertheless, we consider here that we are in a favorable situation where $u_0 \in \mathrm{H}^3(\Omega_0,\R^d)$, which can obviously appear for some particular cases. Note that, from the proof of Corollary~\ref{materialderiv2}, one can get, under the weaker assumption~$u_0 \in \HH^2(\Omega_0,\R^d)$, that the material derivative $\overline{u}'_0$ is the solution to the variational inequality~\eqref{formulderivmatH2} which is, from Subsection~\ref{SectionSignorinicasscalairesansu}, the weak formulation of a Signorini problem, with the source term given by~$-\mathrm{div}(\mathrm{div}(\mathrm{A}\mathrm{e}(u_0))\theta^{\top}+(\mathrm{A}\mathrm{e}(u_0))\nabla{\theta}^{\top}+\mathrm{A}(\nabla{u_0}\nabla{\theta})-\mathrm{div}(\theta)\mathrm{A}\mathrm{e}(u_0))\in\LL^2(\Omega_0,\R^d)$.
\end{myRem}

Thanks to Corollary~\ref{materialderiv2}, we are now in a position to characterize the shape derivative.

\begin{myCor}[Shape derivative]\label{shapederiv1}
Consider the framework of Corollary~\ref{materialderiv2} with the additional assumption that $\Gamma_0$ is of class $\mathcal{C}^3$. Then the shape derivative, defined by~$u'_{0}:=\overline{u}'_0-\nabla{u_0}\theta\in\mathrm{T}_{\mathcal{K}^{1}(\Omega_0)}(u_0)\cap\left(\R\left(E_0-u_0\right)\right)^{\perp}+\nabla{\theta}u_0-\nabla{u_0}\theta$ is the unique weak solution to the Signorini problem
\begin{equation*}
\small{\arraycolsep=2pt
\left\{
\begin{array}{rcll}
-\mathrm{div}(\mathrm{A}\mathrm{e}(u'_0)) & = & 0   & \text{ in } \Omega_0 , \\
u'_0 & = & 0  & \text{ on } \Gamma_{\mathrm{D}} ,\\
\sigma_{\tau}(u'_0) & = & h^s(\theta)_{\tau}  & \text{ on } \Gamma_{\mathrm{S}_0} ,\\
\sigma_{\nn}(u'_0) & = & h^s(\theta)_{\nn}  & \text{ on } \Gamma_{\mathrm{S}^{{u_0}_{\nn}}_{0,\mathrm{N}}} ,\\
 {u'_{0}}_{\nn}& = & W(\theta)_{\nn} & \text{ on } \Gamma_{\mathrm{S}^{{u_0}_{\nn}}_{0,\mathrm{D}}} ,\\
{u'_{0}}_{\nn}\leq W(\theta)_{\nn}, \sigma_{\nn}(u'_0)\leq h^s(\theta)_{\nn} \text{ and } \left( {u'_{0}}_{\nn}-W(\theta)_{\nn} \right)\left(\sigma_{\nn}(u'_0)-h^s(\theta)_{\nn}\right) & = & 0  & \text{ on } \Gamma_{\mathrm{S}^{{u_0}_{\nn}}_{0,\mathrm{S}}},
\end{array}
\right.}
\end{equation*}
where:
$$\bullet~W(\theta):=\left(\nabla{\theta}u_0\right)-\left(\nabla{u_0}\theta\right)\in\HH^{1/2}(\Gamma_0,\R^d);$$ 
\begin{multline*}
\bullet~h^s(\theta):=\theta \cdot \nn\left(\partial_{\nn}\left(\mathrm{A}\mathrm{e}(u_0)\nn\right)-\partial_{\nn}\left(\mathrm{A}\mathrm{e}(u_0)\right)\nn\right)+\mathrm{A}\mathrm{e}(u_0)\nabla_{\tau}\left(\theta\cdot\nn\right)-\nabla{\left(\mathrm{A}\mathrm{e}(u_0)\nn\right)}\theta\\-\sigma_{\nn}(u_0)\left(\mathrm{div}_{\tau}(\theta)\mathrm{I}+\nabla{\theta}^{\top}\right)\nn\in\LL^2(\Gamma_{\mathrm{S}_0},\R^d);
    \end{multline*}
and where $\partial_{\nn}\left(\mathrm{A}\mathrm{e}(u_0)\nn\right):=\nabla{\left(\mathrm{A}\mathrm{e}(u_0)\nn\right)}\nn$ stands for the normal derivative of $\mathrm{A}\mathrm{e}(u_0)\nn$,~$\partial_{\nn}\left(\mathrm{A}\mathrm{e}(u_0)\right)$ is the matrix whose the $i$-th line is the transpose of the vector~$\partial_{\nn}\left(\mathrm{A}\mathrm{e}(u_0)_{i}\right):=\nabla{\left(\mathrm{A}\mathrm{e}(u_0)_{i}\right)}\nn$, where~$\mathrm{A}\mathrm{e}(u_0)_{i}$ is the transpose of the~$i$-th line of the matrix~$\mathrm{A}\mathrm{e}(u_0)$, for all~$i\in[[1,d]]$.
\end{myCor}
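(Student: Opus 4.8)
The plan is to show that $u'_0 := \overline{u}'_0-\nabla{u_0}\theta$ is a \emph{strong} solution to the announced Signorini problem, and then to invoke Proposition~\ref{EquiSignorini}(1) together with the uniqueness of the weak solution (from Subsection~\ref{SectionSignorinicasscalairesansu}) to conclude that $u'_0$ is its unique weak solution. Under the assumptions of Corollary~\ref{materialderiv2}, $\overline{u}'_0$ is precisely the \emph{strong} solution there, and $\nabla{u_0}\theta\in\HH^2(\Omega_0,\R^d)$ since $u_0\in\HH^3(\Omega_0,\R^d)$ and $\theta\in\mathcal{C}_{\mathrm{D}}^{2,\infty}(\R^d,\R^d)$; hence $u'_0\in\HH^1(\Omega_0,\R^d)$ with $\mathrm{A}\mathrm{e}(u'_0)\nn\in\LL^2(\Gamma_{\mathrm{S}_0},\R^d)$ throughout.

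First I would dispose of the easy conditions. By linearity, $-\mathrm{div}(\mathrm{A}\mathrm{e}(u'_0))=-\mathrm{div}(\mathrm{A}\mathrm{e}(\overline{u}'_0))+\mathrm{div}(\mathrm{A}\mathrm{e}(\nabla{u_0}\theta))=0$ in $\Omega_0$, using the interior equation of Corollary~\ref{materialderiv2}. The Dirichlet condition $u'_0=0$ on $\Gamma_{\mathrm{D}}$ holds because $\overline{u}'_0=0$ and $\theta=0$ (hence $\nabla{u_0}\theta=0$) on $\Gamma_{\mathrm{D}}$. For the normal traces I would write ${u'_0}_{\nn}={\overline{u}'_0}_{\nn}-(\nabla{u_0}\theta)_{\nn}$ and exploit that $W(\theta)_{\nn}=(\nabla{\theta}u_0)_{\nn}-(\nabla{u_0}\theta)_{\nn}$, so that the conditions ${\overline{u}'_0}_{\nn}=(\nabla{\theta}u_0)_{\nn}$ on $\Gamma_{\mathrm{S}^{{u_0}_{\nn}}_{0,\mathrm{D}}}$ and ${\overline{u}'_0}_{\nn}\le(\nabla{\theta}u_0)_{\nn}$ on $\Gamma_{\mathrm{S}^{{u_0}_{\nn}}_{0,\mathrm{S}}}$ transform exactly into ${u'_0}_{\nn}=W(\theta)_{\nn}$ and ${u'_0}_{\nn}\le W(\theta)_{\nn}$.

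The heart of the proof is the boundary stress identity
\begin{equation*}
h^m(\theta)-\mathrm{A}\mathrm{e}(\nabla{u_0}\theta)\nn=h^s(\theta)\qquad\text{on }\Gamma_{\mathrm{S}_0},
\end{equation*}
from which all the stress conditions follow: writing $\mathrm{A}\mathrm{e}(u'_0)\nn=\mathrm{A}\mathrm{e}(\overline{u}'_0)\nn-\mathrm{A}\mathrm{e}(\nabla{u_0}\theta)\nn$ and recalling $\sigma_\tau(\overline{u}'_0)=h^m(\theta)_\tau$ on all of $\Gamma_{\mathrm{S}_0}$ and $\sigma_{\nn}(\overline{u}'_0)=h^m(\theta)_{\nn}$ on the Neumann part yields $\sigma_\tau(u'_0)=h^s(\theta)_\tau$ on $\Gamma_{\mathrm{S}_0}$ and $\sigma_{\nn}(u'_0)=h^s(\theta)_{\nn}$ on $\Gamma_{\mathrm{S}^{{u_0}_{\nn}}_{0,\mathrm{N}}}$; moreover, since the identity gives $\sigma_{\nn}(u'_0)-h^s(\theta)_{\nn}=\sigma_{\nn}(\overline{u}'_0)-h^m(\theta)_{\nn}$ while ${u'_0}_{\nn}-W(\theta)_{\nn}={\overline{u}'_0}_{\nn}-(\nabla{\theta}u_0)_{\nn}$, the complementarity relation on $\Gamma_{\mathrm{S}^{{u_0}_{\nn}}_{0,\mathrm{S}}}$ is inherited verbatim from Corollary~\ref{materialderiv2}. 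To establish the identity I would use that $u_0$ is a strong solution, so $\sigma_\tau(u_0)=0$ and thus $\mathrm{A}\mathrm{e}(u_0)\nn=\sigma_{\nn}(u_0)\nn$ on $\Gamma_{\mathrm{S}_0}$; then, setting $\Sigma:=\mathrm{A}\mathrm{e}(u_0)=\mathrm{A}\nabla{u_0}$ (constant symmetric $\mathrm{A}$) and decomposing $\mathrm{A}\mathrm{e}(\nabla{u_0}\theta)=\mathrm{A}(\nabla^2 u_0\cdot\theta)+\mathrm{A}(\nabla{u_0}\nabla{\theta})$, I would rewrite $\mathrm{A}(\nabla^2 u_0\cdot\theta)\nn=\nabla(\Sigma\nn)\theta-\Sigma(\nabla{\nn})\theta$ via the product rule. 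Substituting these and cancelling the common terms $\mathrm{A}(\nabla{u_0}\nabla{\theta})\nn$ and $\nabla(\Sigma\nn)\theta$ reduces the identity to $\Sigma V=0$ on $\Gamma_{\mathrm{S}_0}$, where $V:=\nabla{\theta}^\top\nn+(\nabla{\nn})\theta-(\theta\cdot\nn)\partial_{\nn}\nn-\nabla_\tau(\theta\cdot\nn)-((\partial_{\nn}\theta)\cdot\nn)\nn$ collects the geometric terms.

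The main obstacle is exactly this reduction, a tangential-calculus computation that is the reason $\Gamma_0\in\mathcal{C}^3$ and $u_0\in\HH^3(\Omega_0,\R^d)$ are imposed, so that every boundary derivative lives in $\LL^2(\Gamma_{\mathrm{S}_0})$. Choosing the signed-distance extension of $\nn$, for which $\partial_{\nn}\nn=0$ and $\nabla{\nn}$ is symmetric (the Weingarten map), and using $\nabla=\nabla_\tau+\nn\,\partial_{\nn}$ together with $\nabla(\theta\cdot\nn)=\nabla{\theta}^\top\nn+\nabla{\nn}^\top\theta$, I expect $V$ to vanish identically, so that $\Sigma V=0$ holds trivially; alternatively one keeps a general extension and verifies that the extension-dependent pieces of $h^s$ cancel. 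The delicate part is the bookkeeping of tangential versus normal components and checking that the terms of $h^s$ reassemble correctly from $h^m$ and $\mathrm{A}\mathrm{e}(\nabla{u_0}\theta)\nn$; once the identity is in hand, the interior equation, the Dirichlet and normal-trace conditions, the complementarity, and the final appeals to Proposition~\ref{EquiSignorini}(1) and to weak uniqueness are routine.
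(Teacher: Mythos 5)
Your argument has a genuine gap at its very first step: you assert that, under the hypotheses of Corollary~\ref{materialderiv2}, $\overline{u}'_0$ ``is precisely the strong solution there'', and everything downstream (reading off $\sigma_{\tau}(\overline{u}'_0)=h^m(\theta)_{\tau}$ \textit{a.e.}\ on $\Gamma_{\mathrm{S}_0}$, $\sigma_{\nn}(\overline{u}'_0)=h^m(\theta)_{\nn}$ on $\Gamma_{\mathrm{S}^{{u_0}_{\nn}}_{0,\mathrm{N}}}$, the complementarity relation, and the claim $\mathrm{A}\mathrm{e}(u'_0)\nn\in\LL^2(\Gamma_{\mathrm{S}_0},\R^d)$) rests on that assertion. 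But Corollary~\ref{materialderiv2} only establishes that $\overline{u}'_0$ is the unique \emph{weak} solution. Upgrading a weak solution to a strong one requires Proposition~\ref{EquiSignorini}(2), i.e.\ both $\mathrm{A}\mathrm{e}(\overline{u}'_0)\nn\in\LL^{2}(\Gamma_{\mathrm{S}_0},\R^d)$ and consistency of the \emph{refined} decomposition $\Gamma_{\mathrm{D}}\cup\Gamma_{\mathrm{S}^{{u_0}_{\nn}}_{0,\mathrm{N}}}\cup\Gamma_{\mathrm{S}^{{u_0}_{\nn}}_{0,\mathrm{D}}}\cup\Gamma_{\mathrm{S}^{{u_0}_{\nn}}_{0,\mathrm{S}}}$; neither is available here. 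The consistency assumed in Corollary~\ref{materialderiv2} concerns the coarse decomposition $\Gamma_{\mathrm{D}}\cup\Gamma_{\mathrm{S}_0}$ and serves to make $u_0$ (not $\overline{u}'_0$) a strong solution, while $\overline{u}'_0$ is merely an element of $\HH^{1}_{\mathrm{D}}(\Omega_0,\R^d)$ whose normal stress is a priori only in $\HH^{-1/2}(\Gamma_0,\R^d)$; no regularity theory for $\overline{u}'_0$ is invoked or proved anywhere in the paper. Since the statement you must prove is itself about the \emph{weak} solution, the detour through strong solutions is both unsupported and unnecessary.

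The salvageable, and in fact correct, core of your proposal is the boundary identity $h^m(\theta)-\mathrm{A}\mathrm{e}(\nabla u_0\theta)\nn=h^s(\theta)$ on $\Gamma_{\mathrm{S}_0}$, which you reduce, via $\mathrm{A}\mathrm{e}(u_0)\nn=\sigma_{\nn}(u_0)\nn$ and the product rule, to the vanishing of a purely geometric field $V$ for the signed-distance extension of $\nn$ (symmetric Weingarten map, $\partial_{\nn}\nn=0$); this is exactly the pointwise counterpart of the integrated tangential-calculus computation the paper carries out with Propositions~\ref{intbord} and~\ref{beltrami} and a density argument. To repair your proof, keep this identity but stay at the weak level, as the paper does: start from the inequality established in the proof of Corollary~\ref{materialderiv2}, namely $\dual{\overline{u}'_0}{\varphi-\overline{u}'_0}_{\HH^{1}_{\mathrm{D}}(\Omega_0,\R^d)}\geq -\int_{\Omega_0}\mathrm{div}\left(\mathrm{A}\mathrm{e}\left(\nabla{u_0}\theta\right)\right)\cdot\left(\varphi-\overline{u}'_0\right)+\int_{\Gamma_{\mathrm{S}_0}}h^{m}(\theta)\cdot\left(\varphi-\overline{u}'_0\right)$, substitute $\varphi=\psi+\nabla{u_0}\theta$, and apply the divergence formula (Proposition~\ref{div}) to $\dual{\nabla{u_0}\theta}{\psi-u'_0}_{\HH^{1}_{\mathrm{D}}(\Omega_0,\R^d)}$ — legitimate because $u_0\in\HH^3(\Omega_0,\R^d)$ gives $\mathrm{A}\mathrm{e}(\nabla{u_0}\theta)\in\HH^1(\Omega_0,\R^{d\times d})$ with normal trace in $\LL^2(\Gamma_0,\R^d)$, and $\psi-u'_0$ vanishes on $\Gamma_{\mathrm{D}}$. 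This yields $\dual{u'_0}{\psi-u'_0}_{\HH^{1}_{\mathrm{D}}(\Omega_0,\R^d)}\geq\int_{\Gamma_{\mathrm{S}_0}}\left(h^m(\theta)-\mathrm{A}\mathrm{e}(\nabla{u_0}\theta)\nn\right)\cdot\left(\psi-u'_0\right)=\int_{\Gamma_{\mathrm{S}_0}}h^s(\theta)\cdot\left(\psi-u'_0\right)$ for all $\psi\in\mathrm{T}_{\mathcal{K}^{1}(\Omega_0)}(u_0)\cap\left(\R\left(E_0-u_0\right)\right)^{\perp}+\nabla{\theta}u_0-\nabla{u_0}\theta$, which is precisely the weak formulation of the announced Signorini problem; uniqueness of the weak solution from Subsection~\ref{SectionSignorinicasscalairesansu} then concludes. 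Done this way, your pointwise identity replaces the paper's integral manipulations and the two proofs coincide in substance.
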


\begin{proof}
Since $u'_{0}:=\overline{u}'_0-\nabla{u_0}\theta\in\HH^1_{\mathrm{D}}(\Omega_0,\R^d)$, one deduces from the weak formulation of $\overline{u}'_0$ and using the divergence formula that,
\begin{multline*}
\dual{u'_0}{\varphi-u'_0}_{\HH^{1}_{\mathrm{D}}(\Omega_0,\R^d)}\geq \\ \int_{\Omega_0}\left(\mathrm{div}\left(\mathrm{A}\mathrm{e}(u_0)\right)\theta^{\top}+\left(\mathrm{A}\mathrm{e}(u_0)\right)\nabla{\theta}^{\top}+\mathrm{A}\left(\nabla{u_0}\nabla{\theta}\right)-\mathrm{A}\mathrm{e}\left(\nabla{u_0}\theta\right)\right):\nabla{\left(\varphi-u'_{0}\right)}\\-\int_{\Omega_0}\mathrm{div}(\theta)\mathrm{A}\mathrm{e}(u_0):\mathrm{e}\left(\varphi-u'_{0}\right)-\int_{\Gamma_{\mathrm{S}_0}}\mathrm{A}\mathrm{e}(u_0)\nn\cdot\nabla{\theta}\left(\varphi-u'_{0}\right)-\int_{\Gamma_{0}}\left(\theta\cdot\nn\right) \mathrm{div}\left(\mathrm{A}\mathrm{e}(u_0)\right)\cdot\left(\varphi-u'_{0}\right),
\end{multline*} 
 for all $\varphi \in \mathrm{T}_{\mathcal{K}^{1}(\Omega_0)}(u_0)\cap\left(\R\left(E_0-u_0\right)\right)^{\perp}+\nabla{\theta}u_0-\nabla{u_0}\theta$. Moreover, one has
$$\int_{\Omega_0}\mathrm{div}\left(\mathrm{A}\mathrm{e}(u_0)\right)\theta^{\top}:\nabla{v}=\int_{\Omega_0}\mathrm{div}\left(\mathrm{A}\mathrm{e}(u_0)\right)\cdot\nabla{v}\theta=-\int_{\Omega_0}\mathrm{A}\mathrm{e}(u_0):\nabla{\left(\nabla{v}\theta\right)}+\int_{\Gamma_0}\mathrm{A}\mathrm{e}(u_0)\nn\cdot\nabla{v}\theta,
$$
and also
$$
-\int_{\Omega_0}\mathrm{div}(\theta)\mathrm{A}\mathrm{e}(u_0):\mathrm{e}(v)=\int_{\Omega_0}\theta\cdot\nabla{\left(\mathrm{A}\mathrm{e}(u_0):\mathrm{e}(v)\right)}-\int_{\Gamma_0}\theta\cdot\nn\left(\mathrm{A}\mathrm{e}(u_0):\mathrm{e}(v)\right),
$$
for all $v\in\mathcal{C}^{\infty}(\overline{\Omega_{0}},\R^d)$.
Therefore, using the equality
$$
\left(\left(\mathrm{A}\mathrm{e}(u_0)\right)\nabla{\theta}^{\top}+\mathrm{A}\left(\nabla{u_0}\nabla{\theta}\right)-\mathrm{A}\mathrm{e}\left(\nabla{u_0}\theta\right)\right):\nabla{v}+\theta\cdot\nabla{\left(\mathrm{A}\mathrm{e}(u_0):\mathrm{e}(v)\right)}-\mathrm{A}\mathrm{e}(u_0):\nabla{\left(\nabla{v}\theta\right)}=0,
$$ \textit{a.e.} on $\Omega_0$, one deduces using the divergence formula that
\begin{multline*}
\int_{\Omega_0}\left(\mathrm{div}\left(\mathrm{A}\mathrm{e}(u_0)\right)\theta^{\top}+\left(\mathrm{A}\mathrm{e}(u_0)\right)\nabla{\theta}^{\top}+\mathrm{A}\left(\nabla{u_0}\nabla{\theta}\right)-\mathrm{A}\mathrm{e}\left(\nabla{u_0}\theta\right)\right):\nabla{v}\\-\int_{\Omega_0}\mathrm{div}(\theta)\mathrm{A}\mathrm{e}(u_0):\mathrm{e}\left(v\right)-\int_{\Gamma_0}\nabla{\theta}^{\top}\left(\mathrm{A}\mathrm{e}(u_0)\nn\right)\cdot v-\int_{\Gamma_{0}}\left(\theta\cdot\nn\right) \mathrm{div}\left(\mathrm{A}\mathrm{e}(u_0)\right)\cdot v\\=\int_{\Gamma_0}\theta\cdot\nn\left(-\mathrm{A}\mathrm{e}(u_0):\mathrm{e}(v)-\mathrm{div}\left(\mathrm{A}\mathrm{e}(u_0)\right)\cdot v\right)+\nabla{v}^{\top}(\mathrm{A}\mathrm{e}(u_0)\nn)\cdot\theta-\nabla{\theta}^{\top}(\mathrm{A}\mathrm{e}(u_0)\nn)\cdot v,
\end{multline*}
for all $v\in\mathcal{C}^{\infty}(\overline{\Omega_{0}},\R^d)$. Furthermore, since $\Gamma_0$ is of class $\mathcal{C}^{3}$ and $u_0\in\HH^3(\Omega_0,\R^d)$, $\mathrm{A}\mathrm{e}(u_0)\nn$ can be extended into a function defined in $\Omega_{0}$ such that~$\mathrm{A}\mathrm{e}(u_0)\nn\in\HH^{2}(\Omega_{0},\R^d)$. Thus, it holds that~$\mathrm{A}\mathrm{e}(u_0)\nn\cdot v\in\mathrm{W}^{2,1}(\Omega_{0},\R^d)$, for all $v\in\mathcal{C}^{\infty}(\overline{\Omega_{0}},\R^d)$, and one can use Proposition~\ref{intbord} to get that
\begin{multline*}
\int_{\Gamma_0}\theta\cdot\nn\left(-\mathrm{A}\mathrm{e}(u_0):\mathrm{e}(v)-\mathrm{div}\left(\mathrm{A}\mathrm{e}(u_0)\right)\cdot v\right)+\nabla{v}^{\top}(\mathrm{A}\mathrm{e}(u_0)\nn)\cdot\theta-\nabla{\theta}^{\top}(\mathrm{A}\mathrm{e}(u_0)\nn)\cdot v\\=\int_{\Gamma_0}\theta\cdot\nn\left(-\mathrm{A}\mathrm{e}(u_0):\mathrm{e}(v)-\mathrm{div}\left(\mathrm{A}\mathrm{e}(u_0)\right)\cdot v+\partial_{\nn}\left(\mathrm{A}\mathrm{e}(u_0)\nn\cdot v\right)+H\mathrm{A}\mathrm{e}(u_0)\nn\cdot v\right)
\\-\int_{\Gamma_0}\left(\nabla{\left(\mathrm{A}\mathrm{e}(u_0)\nn\right)}\theta+\nabla{\theta}^{\top}(\mathrm{A}\mathrm{e}(u_0)\nn)+\mathrm{div}_{\tau}(\theta)\mathrm{A}\mathrm{e}(u_0)\nn\right)\cdot v,
\end{multline*}
for all~$v\in\mathcal{C}^{\infty}(\overline{\Omega_{0}},\R^d)$.
Moreover, from Proposition~\ref{beltrami} it follows that
\begin{equation*}
\int_{\Gamma_0}\theta\cdot\nn\left(-\mathrm{div}\left(\mathrm{A}\mathrm{e}(u_0)\right) +H\mathrm{A}\mathrm{e}(u_0)\nn\right)\cdot v = \int_{\Gamma_0}\mathrm{A}\mathrm{e}(u_0):\nabla_{\tau}\left(v\left(\theta\cdot\nn\right)\right)-\left(\theta\cdot\nn\right)\partial_{\nn}\left(\mathrm{A}\mathrm{e}(u_0)\right)\nn\cdot v,
\end{equation*}
for all~$v\in\mathcal{C}^{\infty}(\overline{\Omega_{0}},\R^d)$. Therefore, using the following equalities
$$\mathrm{A}\mathrm{e}(u_0):\nabla_{\tau}\left(v\left(\theta\cdot\nn\right)\right)=\theta\cdot\nn\left(\mathrm{A}\mathrm{e}(u_0):\nabla_{\tau}v\right)+\mathrm{A}\mathrm{e}(u_0)\nabla_{\tau}\left(\theta\cdot\nn\right)\cdot v,
$$
\textit{a.e.} on $\Gamma_0$, and
$$\mathrm{A}\mathrm{e}(u_0):\nabla_{\tau}v=\mathrm{A}\mathrm{e}(u_0):e(v)-\nabla{v}^{\top}(\mathrm{A}\mathrm{e}(u_0)\nn)\cdot\nn, 
$$ 
\textit{a.e.} on $\Gamma_0$,
it holds that
\begin{multline*}
    \int_{\Gamma_0}\theta\cdot\nn\left(-\mathrm{A}\mathrm{e}(u_0):\mathrm{e}(v)-\mathrm{div}\left(\mathrm{A}\mathrm{e}(u_0)\right)\cdot v+\partial_{\nn}\left(\mathrm{A}\mathrm{e}(u_0)\nn\cdot v\right)+H\mathrm{A}\mathrm{e}(u_0)\nn\cdot v\right)
\\-\int_{\Gamma_0}\left(\nabla{\left(\mathrm{A}\mathrm{e}(u_0)\nn\right)}\theta+\nabla{\theta}^{\top}(\mathrm{A}\mathrm{e}(u_0)\nn)+\mathrm{div}_{\tau}(\theta)\mathrm{A}\mathrm{e}(u_0)\nn\right)\cdot v \\ =\int_{\Gamma_0}\left(\theta \cdot \nn\left(\partial_{\nn}\left(\mathrm{A}\mathrm{e}(u_0)\nn\right)-\partial_{\nn}\left(\mathrm{A}\mathrm{e}(u_0)\right)\nn\right)+\mathrm{A}\mathrm{e}(u_0)\nabla_{\tau}\left(\theta\cdot\nn\right)\right)\cdot v\\
-\int_{\Gamma_0}\left(\nabla{\left(\mathrm{A}\mathrm{e}(u_0)\nn\right)}\theta+\left(\mathrm{div}_{\tau}(\theta)\mathrm{I}+\nabla{\theta}^{\top}\right)\mathrm{A}\mathrm{e}(u_0)\nn\right)\cdot v,
\end{multline*}
for all~$v\in\mathcal{C}^{\infty}(\overline{\Omega_{0}},\R^d)$. Thus,
\begin{multline*}
    \int_{\Omega_0}\left(\mathrm{div}\left(\mathrm{A}\mathrm{e}(u_0)\right)\theta^{\top}+\left(\mathrm{A}\mathrm{e}(u_0)\right)\nabla{\theta}^{\top}+\mathrm{A}\left(\nabla{u_0}\nabla{\theta}\right)-\mathrm{A}\mathrm{e}\left(\nabla{u_0}\theta\right)\right):\nabla{v}\\-\int_{\Omega_0}\mathrm{div}(\theta)\mathrm{A}\mathrm{e}(u_0):\mathrm{e}\left(v\right)-\int_{\Gamma_{0}}\nabla{\theta}^{\top}\left(\mathrm{A}\mathrm{e}(u_0)\nn\right)\cdot v-\int_{\Gamma_0}\left(\theta\cdot\nn\right) \mathrm{div}\left(\mathrm{A}\mathrm{e}(u_0)\right)\cdot v\\=\int_{\Gamma_0}\left(\theta \cdot \nn\left(\partial_{\nn}\left(\mathrm{A}\mathrm{e}(u_0)\nn\right)-\partial_{\nn}\left(\mathrm{A}\mathrm{e}(u_0)\right)\nn\right)+\mathrm{A}\mathrm{e}(u_0)\nabla_{\tau}\left(\theta\cdot\nn\right)\right)\cdot v\\-\int_{\Gamma_0}\left(\nabla{\left(\mathrm{A}\mathrm{e}(u_0)\nn\right)}\theta+\left(\mathrm{div}_{\tau}(\theta)\mathrm{I}+\nabla{\theta}^{\top}\right)\mathrm{A}\mathrm{e}(u_0)\nn\right)\cdot v,
\end{multline*}
for all~$v\in\mathcal{C}^{\infty}(\overline{\Omega_{0}},\R^d)$, and one deduces by density of $\mathcal{C}^{\infty}(\overline{\Omega_{0}},\R^d)$ in $\HH^1(\Omega_0,\R^{d})$ that 
\begin{multline*}
\dual{u'_{0}}{\varphi-u'_{0} }_{\HH^{1}_{\mathrm{D}}(\Omega_0,\R^d)}\geq \int_{\Gamma_{\mathrm{S}_0}}\left(\theta \cdot \nn\left(\partial_{\nn}\left(\mathrm{A}\mathrm{e}(u_0)\nn\right)-\partial_{\nn}\left(\mathrm{A}\mathrm{e}(u_0)\right)\nn\right)+\mathrm{A}\mathrm{e}(u_0)\nabla_{\tau}\left(\theta\cdot\nn\right)\right)\cdot\left(\varphi-u'_{0}\right)\\
-\int_{\Gamma_{\mathrm{S}_0}}\left(\nabla{\left(\mathrm{A}\mathrm{e}(u_0)\nn\right)}\theta+\sigma_{\nn}(u_0)\left(\mathrm{div}_{\tau}(\theta)\mathrm{I}+\nabla{\theta}^{\top}\right)\nn\right)\cdot \left(\varphi-u'_{0}\right),
\end{multline*}
for all $\varphi\in\mathrm{T}_{\mathcal{K}^{1}(\Omega_0)}(u_0)\cap\left(\R\left(E_0-u_0\right)\right)^{\perp}+\nabla{\theta}u_0-\nabla{u_0}\theta$, which concludes the proof from Subsection~\ref{SectionSignorinicasscalairesansu}. 
\end{proof}

\begin{myRem}\normalfont
    Note that $\overline{u}'_0$ and $u'_0$ are not linear with respect to the direction~$\theta$. This nonlinearity is standard in shape optimization for variational inequalities (see, e.g.,~\cite{ABCJ,HINTERMULLERLAURAIN} or~\cite[Section 4]{SOKOZOL}).
\end{myRem}
\subsection{Shape gradient of the Signorini energy functional}\label{energyfunctional}

Consider the framework of Subsection~\ref{setting}. Thanks to the characterization of the material and shape derivatives obtained in Subsection~\ref{sectionepidiff}, we are now in a position to prove the shape differentiability of the Signorini energy functional~\eqref{energysigno}. 

\begin{myTheorem}\label{shapederivofJsigno1}
 The Signorini energy functional~$\mathcal{J}$ admits a shape gradient at $\Omega_{0}$ in the direction~$\theta\in\mathcal{C}_{\mathrm{D}}^{2,\infty}(\R^{d},\R^{d})$ given by
\begin{multline}\label{firstshapegradJJ}
    \mathcal{J}'(\Omega_{0})(\theta)=\int_{\Omega_0}\mathrm{div}\left(\theta\right)\frac{\mathrm{A}\mathrm{e}\left(u_0\right):\mathrm{e}\left(u_0\right)}{2}-\int_{\Omega_0}\mathrm{div}\left(\mathrm{A}\mathrm{e}\left(u_0\right)\right)\cdot\nabla{u_0}\theta-\int_{\Omega_0}\mathrm{A}\mathrm{e}\left(u_0\right):\nabla{u_0}\nabla{\theta}\\-\int_{\Gamma_{\mathrm{S}_0}}\theta\cdot\nn \left(f\cdot u_0\right)+\dual{\mathrm{A}\mathrm{e}(u_0)\nn}{\nabla{\theta}u_0}_{\HH^{-1/2}(\Gamma_0,\R^d)\times\HH^{1/2}(\Gamma_0,\R^d)}.
\end{multline}
\end{myTheorem}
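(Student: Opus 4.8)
By definition, $\mathcal{J}'(\Omega_0)(\theta)$ is the derivative at $t=0$ of the map $t\mapsto\mathcal{J}(\Omega_t)$ with $\Omega_t=(\mathrm{id}+t\theta)(\Omega_0)$. The plan is first to transport this functional onto the fixed domain $\Omega_0$ through the change of variables $\mathrm{id}+t\theta$. Writing $u_t=\overline{u}_t\circ(\mathrm{id}+t\theta)^{-1}$ and using the transformation rule $\nabla{u_t}\circ(\mathrm{id}+t\theta)=\nabla{\overline{u}_t}(\mathrm{I}+t\nabla{\theta})^{-1}$ together with the Jacobian factor $\mathrm{J}_t$, I would obtain
\begin{equation*}
\mathcal{J}(\Omega_t)=\frac{1}{2}\int_{\Omega_0}\mathrm{J}_t\mathrm{A}\left[\nabla{\overline{u}_t}(\mathrm{I}+t\nabla{\theta})^{-1}\right]:\nabla{\overline{u}_t}(\mathrm{I}+t\nabla{\theta})^{-1}-\int_{\Omega_0}\mathrm{J}_t f_t\cdot\overline{u}_t,
\end{equation*}
an integral over $\Omega_0$ to which the differentiability results~\ref{diff11}--\ref{diff14} and Theorem~\ref{materialderiv1} (granting the differentiability of $t\mapsto\overline{u}_t$ at $t=0$, with derivative $\overline{u}'_0$) apply.

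Differentiating at $t=0$, where $\mathrm{J}_0=1$, $(\mathrm{I}+t\nabla{\theta})^{-1}$ equals $\mathrm{I}$, $\overline{u}_0=u_0$ and $f_0=f$, and invoking the derivatives recalled in~\ref{diff11}--\ref{diff14} (namely $\mathrm{div}(\theta)$ for $\mathrm{J}_t$, $-\nabla{\theta}$ for $(\mathrm{I}+t\nabla{\theta})^{-1}$ and $\nabla{f}\,\theta$ for $f_t$) together with the symmetry of $\mathrm{A}$, I would reach
\begin{multline*}
\mathcal{J}'(\Omega_0)(\theta)=\frac{1}{2}\int_{\Omega_0}\mathrm{div}(\theta)\,\mathrm{A}\mathrm{e}(u_0):\mathrm{e}(u_0)-\int_{\Omega_0}\mathrm{A}\mathrm{e}(u_0):\nabla{u_0}\nabla{\theta}\\-\int_{\Omega_0}\mathrm{div}(\theta)\,f\cdot u_0-\int_{\Omega_0}\nabla{f}\,\theta\cdot u_0+\left(\int_{\Omega_0}\mathrm{A}\mathrm{e}(u_0):\mathrm{e}(\overline{u}'_0)-\int_{\Omega_0}f\cdot\overline{u}'_0\right),
\end{multline*}
having used $\mathrm{A}\mathrm{e}(u_0)=\mathrm{A}\nabla{u_0}$ and the symmetry of $\mathrm{A}\mathrm{e}(u_0)$ to write $\mathrm{A}\mathrm{e}(u_0):\nabla{\overline{u}'_0}=\mathrm{A}\mathrm{e}(u_0):\mathrm{e}(\overline{u}'_0)$.

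The crucial and most delicate step is to show that the last bracket, the only term involving the material derivative, collapses to a quantity depending on $u_0$ alone. Here I would exploit that $E_0=F_0$ solves the Dirichlet--Neumann problem, so $\dual{E_0}{v}_{\HH^{1}_{\mathrm{D}}(\Omega_0,\R^d)}=\int_{\Omega_0}f\cdot v$ for all $v\in\HH^{1}_{\mathrm{D}}(\Omega_0,\R^d)$; hence the bracket equals $\dual{u_0-E_0}{\overline{u}'_0}_{\HH^{1}_{\mathrm{D}}(\Omega_0,\R^d)}$. Since Theorem~\ref{materialderiv1} places $\overline{u}'_0-\nabla{\theta}u_0$ in $\mathrm{T}_{\mathcal{K}^{1}(\Omega_0)}(u_0)\cap(\R(E_0-u_0))^{\perp}$, the orthogonality $\dual{E_0-u_0}{\overline{u}'_0-\nabla{\theta}u_0}_{\HH^{1}_{\mathrm{D}}(\Omega_0,\R^d)}=0$ reduces the bracket to $\dual{u_0-E_0}{\nabla{\theta}u_0}_{\HH^{1}_{\mathrm{D}}(\Omega_0,\R^d)}$. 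Inserting once more $\dual{E_0}{\nabla{\theta}u_0}=\int_{\Omega_0}f\cdot\nabla{\theta}u_0$ (legitimate because $\nabla{\theta}u_0=0$ on $\Gamma_{\mathrm{D}}$, since $u_0=0$ there) and integrating $\dual{u_0}{\nabla{\theta}u_0}=\int_{\Omega_0}\mathrm{A}\mathrm{e}(u_0):\nabla{(\nabla{\theta}u_0)}$ by parts via the divergence formula (Proposition~\ref{div}) with $\mathrm{div}(\mathrm{A}\mathrm{e}(u_0))=-f$, the two volume contributions cancel and the bracket reduces exactly to $\dual{\mathrm{A}\mathrm{e}(u_0)\nn}{\nabla{\theta}u_0}_{\HH^{-1/2}(\Gamma_0,\R^d)\times\HH^{1/2}(\Gamma_0,\R^d)}$. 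This is the heart of the argument and explains why the final gradient is explicit in $u_0$.

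It then remains to recast the two body-force volume integrals. Applying the divergence formula to the identity $\mathrm{div}((f\cdot u_0)\theta)=(\nabla{f}\,\theta)\cdot u_0+f\cdot\nabla{u_0}\theta+(f\cdot u_0)\mathrm{div}(\theta)$, and noting that $\theta\cdot\nn=0$ on $\Gamma_{\mathrm{D}}$ so that only the $\Gamma_{\mathrm{S}_0}$ boundary term survives, I would obtain $-\int_{\Omega_0}\mathrm{div}(\theta)\,f\cdot u_0-\int_{\Omega_0}\nabla{f}\,\theta\cdot u_0=-\int_{\Gamma_{\mathrm{S}_0}}\theta\cdot\nn\,(f\cdot u_0)+\int_{\Omega_0}f\cdot\nabla{u_0}\theta$. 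Substituting $f=-\mathrm{div}(\mathrm{A}\mathrm{e}(u_0))$ in the last term and collecting all contributions yields precisely~\eqref{firstshapegradJJ}.
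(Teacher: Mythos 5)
Your proof is correct, and while it shares the paper's overall scaffolding (transport of $\mathcal{J}(\Omega_t)$ to the fixed domain $\Omega_0$, differentiation at $t=0$ via the results~\ref{diff11}--\ref{diff14} and Theorem~\ref{materialderiv1}), the key step that eliminates the material derivative is genuinely different. The paper first reduces the energy to $\mathcal{J}(\Omega)=-\frac{1}{2}\int_{\Omega}\mathrm{A}\mathrm{e}(u_\Omega):\mathrm{e}(u_\Omega)$, so no body-force term survives the transport, and it is then left with the full scalar product $\dual{\overline{u}'_0}{u_{0}}_{\HH^{1}_{\mathrm{D}}(\Omega_0,\R^d)}$, which it computes by testing the variational inequality~\eqref{inequalityofmaterialderiv1} at $\varphi=\overline{u}'_0\pm u_0$; this two-sided test turns the inequality into an equality, but requires checking that $\pm u_0$ lies in $\mathrm{T}_{\mathcal{K}^{1}(\Omega_0)}(u_0)\cap\left(\R\left(E_0-u_0\right)\right)^{\perp}$ (in particular the complementarity identity $\dual{E_0-u_0}{u_0}_{\HH^{1}_{\mathrm{D}}(\Omega_0,\R^d)}=0$ and the stability of the convex cone under these shifts). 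You instead keep the term $-\int_{\Omega_0}\mathrm{J}_t f_t\cdot\overline{u}_t$, so that $\overline{u}'_0$ enters only through the combination $\dual{u_0-E_0}{\overline{u}'_0}_{\HH^{1}_{\mathrm{D}}(\Omega_0,\R^d)}$ (using that $E_0=F_0$ Riesz-represents $v\mapsto\int_{\Omega_0}f\cdot v$ on $\HH^{1}_{\mathrm{D}}(\Omega_0,\R^d)$), and this combination collapses to $\dual{u_0-E_0}{\nabla{\theta}u_0}_{\HH^{1}_{\mathrm{D}}(\Omega_0,\R^d)}$ by the orthogonality $\overline{u}'_0-\nabla{\theta}u_0\in\left(\R\left(E_0-u_0\right)\right)^{\perp}$ already contained in the statement of Theorem~\ref{materialderiv1}; the divergence formula (Proposition~\ref{div}) together with $\mathrm{div}(\mathrm{A}\mathrm{e}(u_0))=-f$ then produces the boundary pairing. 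What your route buys is economy and transparency: you never re-test the variational inequality, and the orthogonality argument isolates exactly why the $\overline{u}'_0$-dependence disappears, i.e., why $\mathcal{J}'(\Omega_0)$ is explicit in $u_0$ and linear in $\theta$ (cf. Remark~\ref{remarkadjoint}). What the paper's route buys is brevity in the differentiation step (the reduced energy has a single term) and an identity for $\dual{\overline{u}'_0}{u_{0}}_{\HH^{1}_{\mathrm{D}}(\Omega_0,\R^d)}$ in precisely the form in which it is then substituted. Your closing treatment of the body-force terms via $\mathrm{div}\left((f\cdot u_0)\theta\right)$ and Gauss--Green, with $\theta=0$ on $\Gamma_{\mathrm{D}}$, parallels the paper's manipulation of $\mathrm{div}(f\theta^{\top})$ inside the proof of Theorem~\ref{materialderiv1}, and both computations land on the same formula~\eqref{firstshapegradJJ}.
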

\begin{proof}
    Since, 
    \begin{multline*}
             \mathcal{J}(\Omega_t)=\mathcal{J}((\mathrm{id}+t\theta)(\Omega_{0}))=-\frac{1}{2}\int_{\Omega_t}\mathrm{A}\mathrm{e}\left(u_t\right):\mathrm{e}\left(u_t\right)\\=-\frac{1}{2}\int_{\Omega_0}\mathrm{J}_{t}\mathrm{A}\left[\nabla{\overline{u}_t}\left(\mathrm{I}+t\nabla{\theta}\right)^{-1}\right]:\nabla{\overline{u}_t}\left(\mathrm{I}+t\nabla{\theta}\right)^{-1},
    \end{multline*}
then using the differential results~\ref{diff11},~\ref{diff12} and Theorem~\ref{materialderiv1}, one gets that
$$
\mathcal{J}(\Omega_t)=\mathcal{J}(\Omega_0)+t\mathcal{J}'(\Omega_{0})(\theta)+o(t),
$$
where $\mathcal{J}'(\Omega_{0})(\theta)$ is the directional derivative of $\mathcal{J}$ at $\Omega_0$ in the direction $\theta$ given by
\begin{equation*}
\mathcal{J}'(\Omega_{0})(\theta)=-\frac{1}{2}\int_{\Omega_{0}}\mathrm{div}(\theta)\mathrm{A}\mathrm{e}(u_0):\mathrm{e}(u_0)+\int_{\Omega_{0}}\mathrm{A}\mathrm{e}(u_0):\nabla{u_{0}}\nabla{\theta}-\dual{\overline{u}'_0}{u_{0}}_{_{\HH^{1}_{\mathrm{D}}(\Omega_0,\R^d)}}.
\end{equation*}
Moreover, since $\overline{u}'_0\pm u_0\in\mathrm{T}_{\mathcal{K}^{1}(\Omega_0)}(u_0)\cap\left(\R\left(E_0-u_0\right)\right)^{\perp}+\nabla{\theta}u_0$, one deduces from the variational formulation of $\overline{u}'_0$ (see Inequality~\eqref{inequalityofmaterialderiv1}) and the divergence formula that
\begin{multline*}
    \dual{\overline{u}'_0}{u_0 }_{\HH^{1}_{\mathrm{D}}(\Omega_0,\R^d)}=\int_{\Omega_0}\left(\mathrm{div}\left(\mathrm{A}\mathrm{e}(u_0)\right)\theta^{\top}+\left(\mathrm{A}\mathrm{e}(u_0)\right)\nabla{\theta}^{\top}+\mathrm{A}\left(\nabla{u_0}\nabla{\theta}\right)-\mathrm{div}(\theta)\mathrm{A}\mathrm{e}(u_0)\right):\nabla{u_0}\\+\int_{\Gamma_{\mathrm{S}_0}}\theta\cdot\nn (f\cdot u_0)-\dual{\mathrm{A}\mathrm{e}(u_0)\nn}{\nabla{\theta}u_0}_{\HH^{-1/2}(\Gamma_0,\R^d)\times\HH^{1/2}(\Gamma_0,\R^d)}.
\end{multline*}
Using the equality $\mathrm{div}\left(\mathrm{A}\mathrm{e}(u_0)\right)\theta^{\top}:\nabla{u_0}=\mathrm{div}\left(\mathrm{A}\mathrm{e}(u_0)\right)\cdot\nabla{u_0}\theta$ \textit{a.e.} on $\Omega_0$, it follows that~$\mathcal{J}'(\Omega_{0})(\theta)$ is given by Equality~\eqref{firstshapegradJJ}. Finally, note that~$\mathcal{J}'(\Omega_{0})$ is linear and continuous on~$\mathcal{C}_{\mathrm{D}}^{2,\infty}(\R^{d},\R^{d})$, thus~$\mathcal{J}'(\Omega_{0})$ is the shape gradient of $\mathcal{J}$ at $\Omega_{0}$.
\end{proof}

As we did for the material derivative, the presentation of Theorem~\ref{shapederivofJsigno1} can be improved under additional assumption.

\begin{myCor}\label{shapederivofJ}
Assume that $u_0\in\HH^2(\Omega_0,\R^d)$. Then the Signorini energy functional $\mathcal{J}$ admits a shape gradient at $\Omega_{0}$ in the direction~$\theta\in\mathcal{C}_{\mathrm{D}}^{2,\infty}(\R^{d},\R^{d})$ given by
$$ \mathcal{J}'(\Omega_{0})(\theta)=\int_{\Gamma_{\mathrm{S}_{0}}} \left(\theta\cdot\nn\left(\frac{\mathrm{A}\mathrm{e}(u_0):\mathrm{e}(u_0)}{2}-f\cdot u_{0}\right)+\mathrm{A}\mathrm{e}(u_0)\nn\cdot\left(\nabla{\theta}u_0-\nabla{u_0}\theta\right)\right).
$$
\end{myCor}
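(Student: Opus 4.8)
The plan is to start from the expression \eqref{firstshapegradJJ} for $\mathcal{J}'(\Omega_0)(\theta)$ obtained in Theorem~\ref{shapederivofJsigno1} and to convert every volume integral into a boundary integral over $\Gamma_{\mathrm{S}_0}$, exploiting the extra regularity $u_0\in\HH^2(\Omega_0,\R^d)$. First I would record the consequences of this regularity: $\mathrm{A}\mathrm{e}(u_0)\in\HH^1(\Omega_0,\R^{d\times d})$, hence $\mathrm{div}(\mathrm{A}\mathrm{e}(u_0))\in\LL^2(\Omega_0,\R^d)$ and the trace $\mathrm{A}\mathrm{e}(u_0)\nn\in\LL^2(\Gamma_0,\R^d)$; in particular the duality pairing $\dual{\mathrm{A}\mathrm{e}(u_0)\nn}{\nabla{\theta}u_0}_{\HH^{-1/2}(\Gamma_0,\R^d)\times\HH^{1/2}(\Gamma_0,\R^d)}$ in \eqref{firstshapegradJJ} becomes the genuine boundary integral $\int_{\Gamma_0}\mathrm{A}\mathrm{e}(u_0)\nn\cdot\nabla{\theta}u_0$. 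Since $\theta=0$ and $u_0=0$ on $\Gamma_{\mathrm{D}}$, every boundary term produced below reduces to an integral over $\Gamma_{\mathrm{S}_0}$.

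The first volume term is handled by the divergence formula (Proposition~\ref{div}) applied to the field $g\theta$ with $g:=\tfrac12\mathrm{A}\mathrm{e}(u_0):\mathrm{e}(u_0)$, giving
$$\int_{\Omega_0}\mathrm{div}(\theta)\,g=\int_{\Gamma_{\mathrm{S}_0}}(\theta\cdot\nn)\,g-\int_{\Omega_0}\theta\cdot\nabla{g}.$$
The boundary term is already the first half of the target. For the interior term I would compute $\nabla{g}$: using the symmetry of $\mathrm{A}$ (so that $\mathrm{A}\mathrm{e}(u_0):\mathrm{e}(u_0)=\mathrm{A}\mathrm{e}(u_0):\nabla{u_0}$ and $\partial_m g=\mathrm{A}\mathrm{e}(u_0):\partial_m(\nabla{u_0})$) together with the pointwise tensor identity
$$\sum_m\theta_m\,\partial_m(\nabla{u_0})=\nabla{(\nabla{u_0}\,\theta)}-\nabla{u_0}\,\nabla{\theta},$$
which follows from Schwarz's theorem $\partial_m\partial_q=\partial_q\partial_m$, I obtain $\theta\cdot\nabla{g}=\mathrm{A}\mathrm{e}(u_0):\nabla{(\nabla{u_0}\,\theta)}-\mathrm{A}\mathrm{e}(u_0):\nabla{u_0}\,\nabla{\theta}$. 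The second piece exactly cancels the third volume integral in \eqref{firstshapegradJJ}.

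It then remains to treat $-\int_{\Omega_0}\mathrm{A}\mathrm{e}(u_0):\nabla{(\nabla{u_0}\,\theta)}$ jointly with the second volume term $-\int_{\Omega_0}\mathrm{div}(\mathrm{A}\mathrm{e}(u_0))\cdot\nabla{u_0}\,\theta$ of \eqref{firstshapegradJJ}. Since $\nabla{u_0}\,\theta\in\HH^1(\Omega_0,\R^d)$, one more application of the divergence formula (again via $\mathrm{A}\mathrm{e}(u_0):\nabla{w}=\mathrm{A}\mathrm{e}(u_0):\mathrm{e}(w)$) gives
$$-\int_{\Omega_0}\mathrm{A}\mathrm{e}(u_0):\nabla{(\nabla{u_0}\,\theta)}=\int_{\Omega_0}\mathrm{div}(\mathrm{A}\mathrm{e}(u_0))\cdot\nabla{u_0}\,\theta-\int_{\Gamma_{\mathrm{S}_0}}\mathrm{A}\mathrm{e}(u_0)\nn\cdot\nabla{u_0}\,\theta,$$
and the interior term here cancels the second volume term of \eqref{firstshapegradJJ} identically. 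Collecting the surviving boundary contributions — namely $\int_{\Gamma_{\mathrm{S}_0}}(\theta\cdot\nn)\,g$, the term $-\int_{\Gamma_{\mathrm{S}_0}}\mathrm{A}\mathrm{e}(u_0)\nn\cdot\nabla{u_0}\,\theta$, the already-boundary term $-\int_{\Gamma_{\mathrm{S}_0}}(\theta\cdot\nn)(f\cdot u_0)$, and $\int_{\Gamma_{\mathrm{S}_0}}\mathrm{A}\mathrm{e}(u_0)\nn\cdot\nabla{\theta}u_0$ — and regrouping yields exactly the claimed formula.

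The main obstacle is bookkeeping rather than conceptual: one must derive the tensor identity for $\theta\cdot\nabla{g}$ correctly and arrange the two integrations by parts so that \emph{both} interior terms carrying $\mathrm{div}(\mathrm{A}\mathrm{e}(u_0))$ and \emph{both} interior terms carrying $\mathrm{A}\mathrm{e}(u_0):\nabla{u_0}\,\nabla{\theta}$ cancel. I would check that each integration by parts is legitimate under $u_0\in\HH^2$ (so that $\mathrm{div}(\mathrm{A}\mathrm{e}(u_0))\in\LL^2$ and the traces lie in $\LL^2(\Gamma_0)$), and that $\theta$ and $u_0$ both vanish on $\Gamma_{\mathrm{D}}$ to discard the Dirichlet part of each boundary integral. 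Note, pleasantly, that the Signorini equation $-\mathrm{div}(\mathrm{A}\mathrm{e}(u_0))=f$ need not even be substituted, since the two $\mathrm{div}(\mathrm{A}\mathrm{e}(u_0))$ interior integrals already cancel.
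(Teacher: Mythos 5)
Your proposal is correct and follows essentially the same route as the paper's proof: both start from the expression~\eqref{firstshapegradJJ} of Theorem~\ref{shapederivofJsigno1}, perform the same two integrations by parts (on $\int_{\Omega_0}\mathrm{div}(\theta)\,\tfrac{1}{2}\mathrm{A}\mathrm{e}(u_0):\mathrm{e}(u_0)$ and on the term involving $\mathrm{div}(\mathrm{A}\mathrm{e}(u_0))$), exploit the same pointwise identity $\mathrm{A}\mathrm{e}(u_0):\mathrm{e}(\nabla{u_0}\theta)=\mathrm{A}\mathrm{e}(u_0):\nabla{u_0}\nabla{\theta}+\tfrac{1}{2}\theta\cdot\nabla\left(\mathrm{A}\mathrm{e}(u_0):\mathrm{e}(u_0)\right)$ to cancel all remaining volume integrals, and conclude from the vanishing of $\theta$ (and $u_0$) on $\Gamma_{\mathrm{D}}$. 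The only differences are cosmetic: you apply the identity and the second integration by parts in the opposite order, and you derive the tensor identity from Schwarz's theorem where the paper simply asserts it.
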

\begin{proof}
Let $\theta\in\mathcal{C}_{\mathrm{D}}^{2,\infty}(\R^{d},\R^{d})$. Since $u_0\in\HH^2(\Omega_0,\R^d)$, it follows from Theorem~\ref{shapederivofJsigno1} that 
\begin{multline*}
    \mathcal{J}'(\Omega_{0})(\theta)=-\frac{1}{2}\int_{\Omega_{0}}\theta\cdot\nabla{{\left(\mathrm{A}\mathrm{e}(u_0):\mathrm{e}(u_0)\right)}}+\int_{\Gamma_{0}}\theta\cdot\nn\frac{\mathrm{A}\mathrm{e}(u_0):\mathrm{e}(u_0)}{2}+\int_{\Omega_0}\mathrm{A}\mathrm{e}\left(u_0\right):\mathrm{e}\left(\nabla{u_0}\theta\right)\\-\int_{\Gamma_0}\mathrm{A}\mathrm{e}\left(u_0\right)\nn\cdot\nabla{u_0}\theta-\int_{\Omega_0}\mathrm{A}\mathrm{e}\left(u_0\right):\nabla{u_0}\nabla{\theta}-\int_{\Gamma_{\mathrm{S}_0}}\theta\cdot\nn \left(f\cdot u_0\right)+\int_{\Gamma_{\mathrm{S}_0}}\mathrm{A}\mathrm{e}(u_0)\nn\cdot\nabla{\theta}u_0.
\end{multline*}
Moreover, since
$$
\mathrm{A}\mathrm{e}(u_0):\mathrm{e}\left(\nabla{u_0}\theta\right)=\mathrm{A}\mathrm{e}(u_0):\nabla{u_0}\nabla{\theta}+\frac{1}{2}\theta\cdot\nabla{{\left(\mathrm{A}\mathrm{e}(u_0):\mathrm{e}(u_0)\right)}} \text{ \textit{a.e.} on } \Omega_0,
$$
it holds that
\begin{multline*}
     \mathcal{J}'(\Omega_{0})(\theta)=\int_{\Gamma_{0}}\theta\cdot\nn\left(\frac{\mathrm{A}\mathrm{e}(u_0):\mathrm{e}(u_0)}{2}\right)-\int_{\Gamma_0}\mathrm{A}\mathrm{e}(u_0)\nn\cdot\nabla{u_0}\theta-\int_{\Gamma_{\mathrm{S}_0}}\theta\cdot\nn (f\cdot u_0)+\int_{\Gamma_0}\mathrm{A}\mathrm{e}(u_0)\nn\cdot\nabla{\theta}u_0,
\end{multline*}
which completes the proof since $\theta=0$ on $\Gamma_{\mathrm{D}}$.
\end{proof}

\begin{myRem}\label{remarkadjoint}\normalfont
Consider the framework of Theorem~\ref{shapederivofJsigno1}. It is interesting to note that the scalar product~$\dual{\overline{u}'_0}{u_0 }_{\HH^{1}_{\mathrm{D}}(\Omega_0,\R^{d})}$ is linear with respect to the direction $\theta$, while $\overline{u}'_0$ is not. This leads to an expression of the shape gradient~$\mathcal{J}'(\Omega_{0})(\theta)$ that is linear with respect to the direction~$\theta$, thus to the shape differentiability of the Signorini energy functional $\mathcal{J}$ at $\Omega_{0}$. Note that, in the context of cracks and variational
inequalities involving unilateral conditions, it can already be observed that the shape gradient of the energy functional is linear with respect to $\theta$ (see, e.g.,~\cite[Theorem 2.22 or Theorem 4.20]{Gilles}). Furthermore note that the shape gradient~$\mathcal{J}'(\Omega_{0})(\theta)$ depends only on~$u_0$ (and not on $u’_0$), therefore its expression is explicit with respect to the direction $\theta$, and there is no need to introduce any adjoint problem to perform numerical simulations. Nevertheless, for other cost functionals, some difficulties can appear to correctly define an adjoint problem due to nonlinearities in shape gradients, and may constitute an interesting area for future researches.
\end{myRem}

\begin{myRem}
     Note that an expression of the shape gradient of the Signorini energy functional has been obtained in~\cite[Section 5.5]{Gilles} in the particular case where $\Gamma_{\mathrm{S_0}}$ is a rectilinear boundary part. Therefore, Theorem~\ref{shapederivofJsigno1} can be seen as a generalisation of this result.
\end{myRem}

\begin{comment}
    \textcolor{blue}{\begin{myRem}
    Instead of the Signorini energy functional, it would be possible to consider the compliance defined by
     $$
     \mathcal{C}(\Omega) := \int_{\Omega}f\cdot u_{\Omega},
     $$
     for all $\Omega\in\mathcal{U}_{\mathrm{ref}}$,
     where $u_\Omega \in\HH^{1}_{\mathrm{D}}(\Omega,\R^{d})$ is the unique solution to the Signorini problem~\eqref{Signoriniproblem2221}. Indeed, from the weak formulation satisfies by $u_\Omega$, it follows that $\int_{\Omega}f\cdot u_{\Omega}=\int_{\Omega}\mathrm{A}\mathrm{e}\left(u_\Omega\right):\mathrm{e}\left(u_\Omega\right)$, hence $\mathcal{C}(\Omega)=-2\mathcal{J}(\Omega)$ and the shape gradient of $\mathcal{C}$ follows from the one of $\mathcal{J}$.
\end{myRem}}
\end{comment}

\section{Numerical simulations}\label{numericalsim}
In this section we numerically solve an example of the shape optimization problem~\eqref{shapeOptim} in the two-dimensional case $d=2$, by making use of our theoretical results obtained in Section~\ref{mainresultoff}. The numerical simulations have been performed using Freefem++ software~\cite{HECHT} with P1-finite elements and standard affine mesh. We could use the expression of the shape gradient of~$\mathcal{J}$ obtained in Theorem~\ref{shapederivofJsigno1} but, in order to simplify the computations, we chose to use the expression provided in Corollary~\ref{shapederivofJ} under the additional assumption~$u_0 \in \mathrm{H}^2(\Omega_0,\R^d)$ that we assumed to be true at each iteration.

\subsection{Numerical methodology}\label{methodnum}
Consider an initial shape~$\Omega_0 \in \mathcal{U}_{\mathrm{ref}}$. Note that Corollary~\ref{shapederivofJ} allows to exhibit a descent direction~$\theta_0$ of the Signorini energy functional~$\mathcal{J}$ at~$\Omega_0$ by finding the solution $\theta_0$ to the variational equality
\begin{equation*}
\dual{\theta_0}{\theta}_{\HH^{1}_{\mathrm{D}}(\Omega_0,\R^d)}=-\mathcal{J}'(\Omega_0)(\theta), \qquad \forall\theta\in\HH^{1}_{\mathrm{D}}(\Omega_0,\R^d),
\end{equation*}
since it satisfies~$\mathcal{J}'(\Omega_{0})(\theta_0)=-\left\|\theta_0\right\|^{2}_{\HH^{1}_{\mathrm{D}}(\Omega_0,\R^d)}\leq0
$.

In order to numerically solve the shape optimization problem~\eqref{shapeOptim} on a given example, we have to deal with the volume constraint~$\vert \Omega \vert = \vert \Omega_{\mathrm{ref}} \vert>0$. For this purpose, the Uzawa algorithm (see, e.g.,~\cite[Chapter 3 p.64]{ALL}) is used, and one refers to~\cite[Section 4]{ABCJ} for methodological details. 

Let us mention that the Signorini problem is numerically solved using the Nitsche's method (see, e.g.,~\cite{BRETIN,CHOULY,NITSCHE}). In a nutshell, the solution $u_0\in\HH^{1}_{\mathrm{D}}(\Omega_0,\R^d)$ is approximated by $u_{0}^h\in\mathbb{V}^h$ which is the solution to the Nitsche's formulation
\begin{multline*}
        \int_{\Omega_0}\mathrm{A}\mathrm{e}(u_{0}^h):\mathrm{e}(v^h)-\gamma\int_{\Gamma_{\mathrm{S}_0}}\sigma_{\nn}(u_{0}^h)\sigma_{\nn}(v^h)+\frac{1}{\gamma}\int_{\Gamma_{\mathrm{S}_0}}\left[{u_0}_{\nn}^{h}-\gamma\sigma_{\nn}(u_{0}^h)\right]_{+}\left[v_{\nn}^h-\gamma\sigma_{\nn}(v^h)\right]\\=\int_{\Omega_0}f\cdot v^h, \qquad\forall v^h\in\mathbb{V}^h,
\end{multline*}
where $\mathbb{V}^h$ is the classical P1-finite elements space whose elements are null on $\Gamma_{\mathrm{D}}$ (see~\cite{CHOULY} for numerical analysis details). We also precise that, for all~$i\in\mathbb{N}^{*}$, the difference between the Signorini energy functional $\mathcal{J}$ at the iteration $20\times i$ and at the iteration~$20\times (i-1)$ is computed. The smallness of this difference is used as a stopping criterion for the algorithm.

\subsection{Two-dimensional example and numerical results}
In this subsection, let~$d=2$ and $f\in\HH^{1}(\R^{2},\R^2)$ given by
$$
\fonction{f}{\R^{2}}{\R^2}{(x,y)}{\displaystyle f(x,y):= \begin{pmatrix}
\frac{1}{2}\exp(x^2)\eta(x,y) & 0
\end{pmatrix},}
$$
where~$\eta\in\mathcal{C}^{\infty}_0(\R^2,\R)$ is a cut-off function chosen such that $\eta=1$ everywhere inside a ball of extremely large radius, and $0$ outside a slightly larger ball, so that $f$ belongs to $\HH^1(\R^2,\R^2)$. The reference shape $\Omega_{\mathrm{ref}}$ is the unit disk of $\R^{2}$, and the fixed part $\Gamma_{\mathrm{D}}$ is given by
$$
\Gamma_{\mathrm{D}}=\left\{\left(\cos{\alpha},\sin{\alpha}\right)\in\Gamma_{\mathrm{ref}} \mid \alpha\in\left[\frac{\pi}{6},\frac{5\pi}{6}\right]\cup\left[\frac{7\pi}{6},\frac{11\pi}{6}\right] \right \},
$$
(see Figure~\ref{figurrre}). The volume constraint is $\vert \Omega_{\mathrm{ref}} \vert=\pi$
 and the initial shape is $\Omega_{0}:=\Omega_{\mathrm{ref}}$. 

 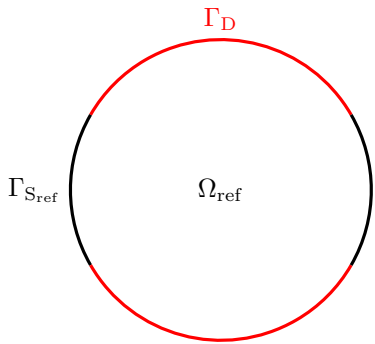
\begin{figure}[ht]
    \centering
\begin{tikzpicture}\label{figure1}
\draw (0,0) node{$\Omega_{\mathrm{ref}}$};
\draw [color=red, very thick] (1.732,1) arc (30:150:2);
\draw [color=black, very thick](-1.732,1) arc(150:210:2);
\draw [color=red, very thick](-1.732,-1) arc(210:330:2);
\draw [color=black, very thick](1.732,-1) arc(330:390:2);
\draw (0,2) [color=red] node[above]{$\Gamma_{\mathrm{D}}$};
\draw (-2,0) [color=black] node[left]{$\Gamma_{\mathrm{S}_{\mathrm{ref}}}$};
\end{tikzpicture}
    \caption{Unit disk $\Omega_{\mathrm{ref}}$ and its boundary $\Gamma_{\mathrm{ref}}=\Gamma_{\mathrm{D}}\cup\Gamma_{\mathrm{S}_{\mathrm{ref}}}$.}\label{figurrre}
\end{figure}

 We assume that, for all $\Omega\in\mathcal{U}_{\mathrm{ref}}$, the Cauchy stress tensor~$\sigma$, defined by~$\sigma(v):=\mathrm{A}\mathrm{e}(v)$ for all~$v\in\HH^{1}_{\mathrm{D}}(\Omega,\R^2)$, satisfies
 $$
 \sigma(v)=2\mu\mathrm{e}(v)+\lambda \mathrm{tr}\left(\mathrm{e}(v)\right)\mathrm{I},
 $$
 where $\mathrm{tr}\left(\mathrm{e}(v)\right)$ is the trace of the matrix $\mathrm{e}(v)$, and $\mu\geq0,\lambda\geq0$ are Lamé parameters (see, e.g.,~\cite{SALEN}). From a physical point of view, this assumption corresponds to \textit{isotropic} elastic solids. In the sequel, we consider the data $\mu=0.3846$ and~$\lambda=0.5769$, corresponding to a Young's modulus equal to~$1$ and to a Poisson's ratio equal to $0.3$, which is a typical value for a large variety of materials. One presents the numerical results obtained for this two-dimensional example using the methodology described in Subsection~\ref{methodnum}.
 
 In Figure~\ref{fig1} is represented the initial shape (left) and the shape which solves Problem~\eqref{shapeOptim} (right). On top are the vector values of the solution $u$ to the Signorini problem~\eqref{Signoriniproblem2221}. Note that the black boundary shows where $\sigma_{\nn}(u)=0$, while the yellow boundary shows where $u_\nn=0$. At the bottom is shown the values of the integrand of $\mathcal{J}$, i.e., the map $x\in\Omega\to \frac{1}{2}\mathrm{A}\mathrm{e}\left(u(x)\right):\mathrm{e}\left(u(x)\right)-f(x)\cdot u(x)$. It seems that the area where the integrand of $\mathcal{J}$ is the lowest (in orange) has been shifted to the left by "pushing" the left boundary (which corresponds to the part where there is no compressive stress), while in return, the right boundary (which corresponds to the contact part) has been pulled.
 
\begin{figure}[h!]
    \centering
    \includegraphics[scale=0.44, trim = 3.1cm 0cm 3.1cm 0.6cm, clip]{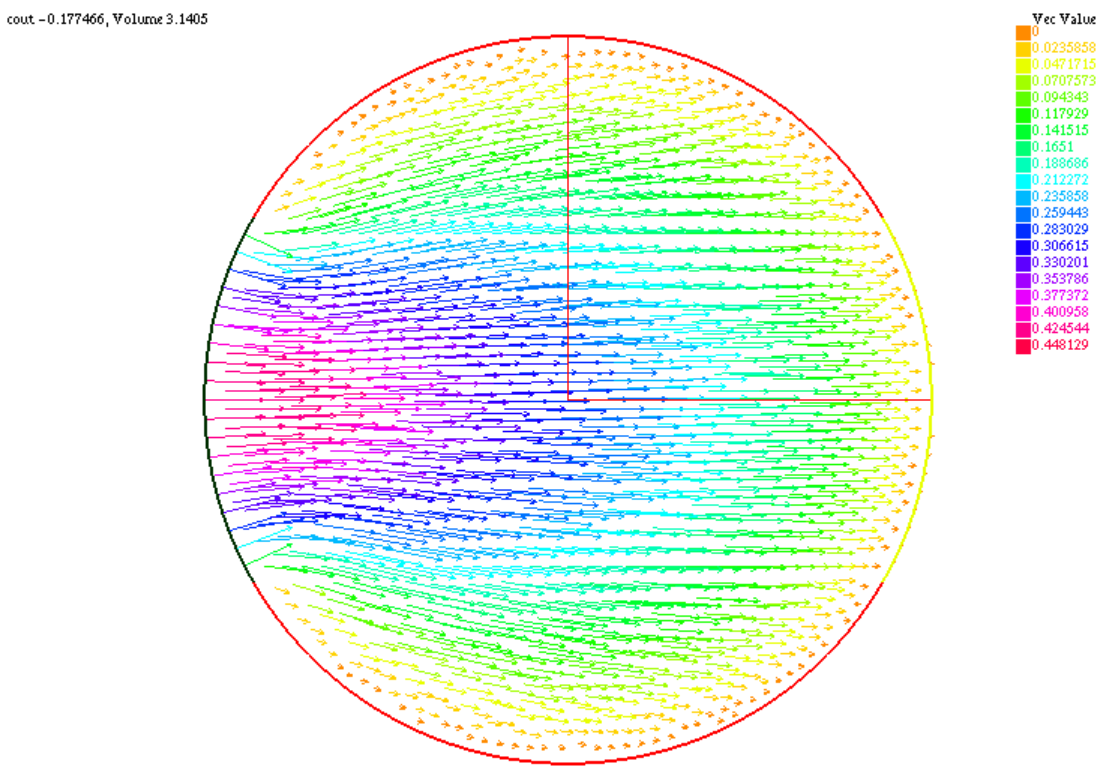}
    \includegraphics[scale=0.44,trim = 2.1cm 0cm 3.1cm 0.6cm, clip]{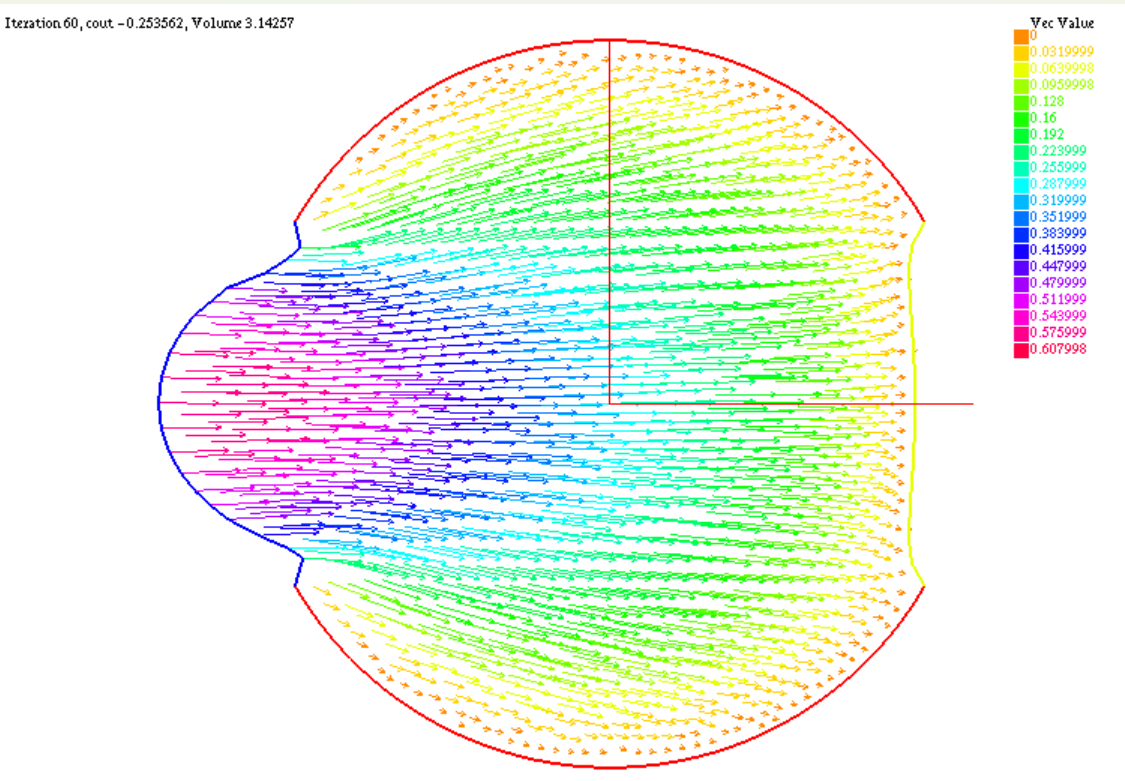}
    \includegraphics[scale=0.44,trim = 2.1cm 0cm 3.1cm 0.6cm, clip]{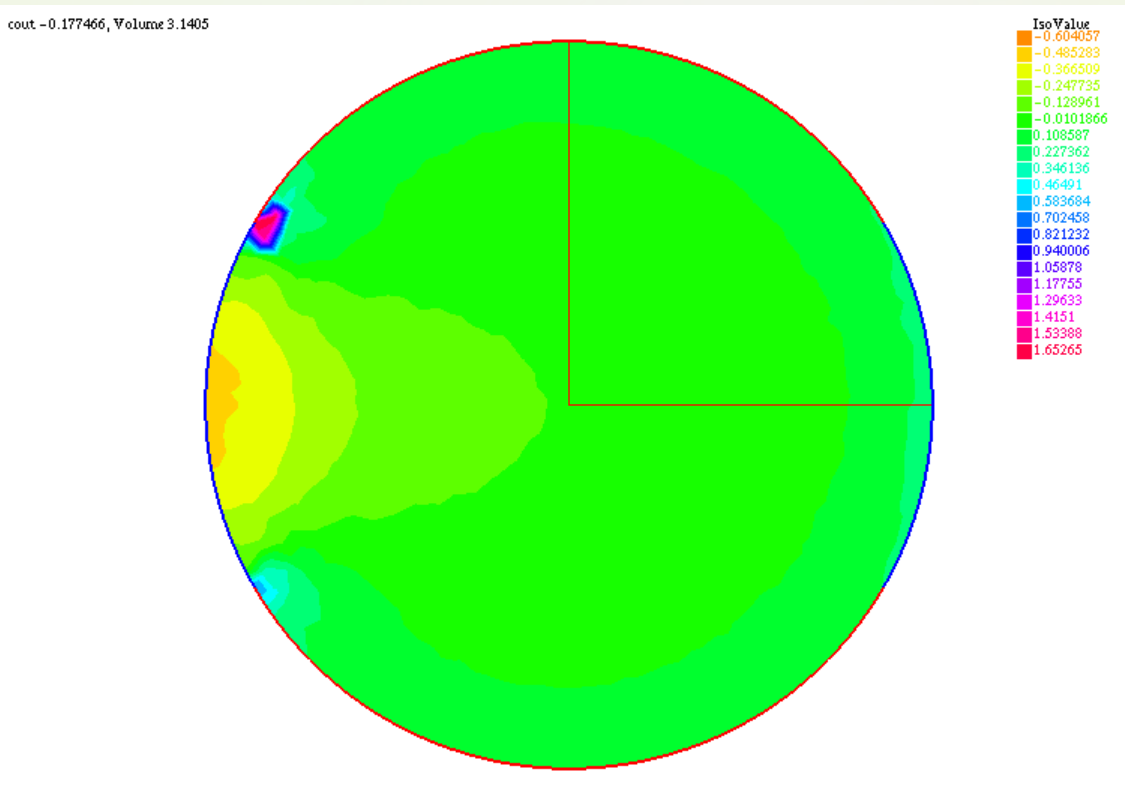}  
     \includegraphics[scale=0.44,trim = 2.1cm 0cm 3.1cm 0.6cm, clip]{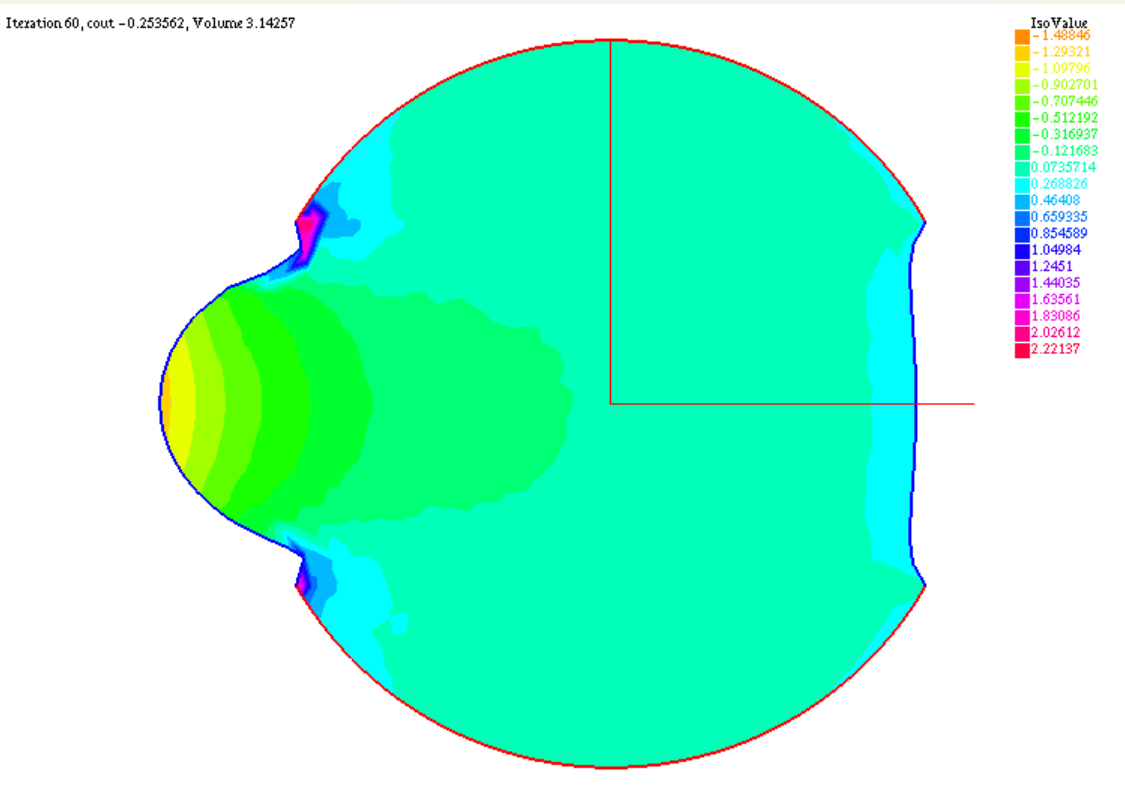} 
    \caption{Initial shape (left) and the shape minimizing $\mathcal{J}$ (right), under the volume constraint~$\vert \Omega_{\mathrm{ref}} \vert=\pi$. On top is shown the vector values of the Signorini solution, while at bottom is shown the values of the integrand of $\mathcal{J}$, i.e., the map $x\in\Omega\to \frac{1}{2}\mathrm{A}\mathrm{e}\left(u(x)\right):\mathrm{e}\left(u(x)\right)-f(x)\cdot u(x)$. }
    \label{fig1}
\end{figure}

 Figure~\ref{figure3} shows the values of $\mathcal{J}$ (left) and the volume of the shape (right) with respect to the iteration. We observe that $\mathcal{J}$ is lower at the final shape, than at the initial shape, with some oscillations due to the Lagrange multiplier in order to satisfy the volume constraint. 

\begin{figure}[h!]
    \centering
\includegraphics[scale=0.58]{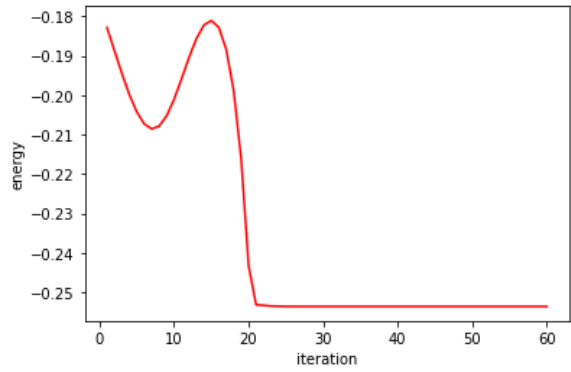}
    \hspace{2cm}
    \includegraphics[scale=0.58]{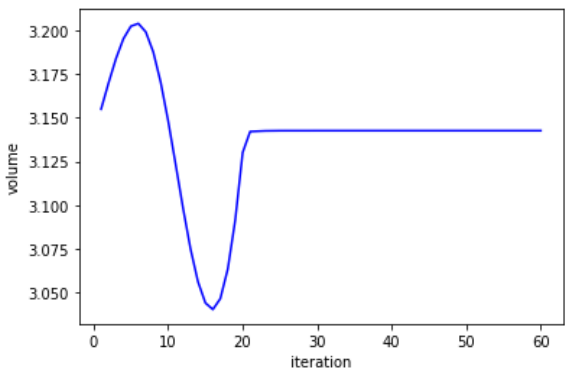}  
    \caption{Values of the energy functional (left) and the volume (right) with respect to the iterations.}
    \label{figure3}
\end{figure}

\appendix

\section{Reminders}\label{rappelgeneral}

In this appendix we recall in Subsection~\ref{capacity} some concepts from capacity theory and, in Subsection~\ref{rappelfunct}, some results on differential geometry.

\subsection{Notions from capacity theory}\label{capacity}
Let us recall some notions from capacity theory (we refer to standard references such as~\cite{DENY,HARAUX,HENROT,MIGNOT}). Let $d\in\mathbb{N}^{*}$ be a positive integer, $(\mathrm{X}, \mathcal{B}(\mathrm{X}), \xi )$ be a positively measured topological space with its borelian~$\sigma$-algebra,~$\xi$ a Radon measure, and~$\mathrm{X}\subset\R^d$ a locally compact set, admitting a countable compact covering. Let~$\mathcal{H}\subset\LL^2(\mathrm{X},\xi)$ be a vector space endowed with a scalar product~$\dual{\cdot}{\cdot}_{\mathcal{H}}$ and~$\left\|\cdot\right\|_{\mathcal{H}}$ the corresponding norm.
\begin{myDefn}
    Consider $\mathrm{B}\in\mathcal{B}(\mathrm{X})$ and let us introduce the closed convex subset
    $$
    \mathrm{C_{\mathrm{B}}}:=\left\{v\in\mathcal{H}\mid v\geq1 \text{ $\xi$-\textit{a.e.} on a neighborhood of }\mathrm{B} \right\}.
    $$
    The capacity of $\mathrm{B}$ is defined by
    $$
    \mathrm{cap}(\mathrm{B}):=   \left\|\mathrm{proj}_{\mathrm{C_{\mathrm{B}}}}(0)\right\|^{2}_{\mathcal{H}},
    $$
where $\mathrm{proj}_{\mathrm{C_{\mathrm{B}}}}$ is the projection operator onto the nonempty closed convex set $\mathrm{C_{\mathrm{B}}}$.
\end{myDefn}
\begin{myDefn}\label{quasieve}
  A property holds quasi everywhere (denoted \textit{\textit{q.e.}}) if it holds for all elements in a set except a subset of null capacity.
\end{myDefn}

\begin{myDefn}
    A function $v :\mathrm{X} \rightarrow \R$ is said to be quasi-continuous if there exists a decreasing sequence of open sets $(w_{n})_{n\in\mathbb{N}}$ such that $\mathrm{cap}(w_n)\rightarrow 0$ when~$n\rightarrow +\infty$ and $v_{\mid\mathrm{X} \backslash w_n}$ is continuous for all~$n\in\mathbb{N}$.
\end{myDefn}

Now, let us assume that~$(\mathcal{H},\dual{\cdot}{\cdot}_{\mathcal{H}})$ is a Dirichlet space (see, e.g.,~\cite{MIGNOT}). Then, one can prove the following proposition (see, e.g.,~\cite{HARAUX,HENROT,MIGNOT}).

\begin{myProp}\label{uniquerepres}
    For all $v\in\mathcal{H}$, there exists a unique quasi-continuous representative in the class of $v$ (for the $\textit{q.e.}$ equivalence relation).
\end{myProp}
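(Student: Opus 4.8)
The plan is to prove existence of a quasi-continuous representative through a capacitary maximal inequality combined with a Borel--Cantelli argument, and then to obtain uniqueness from the principle that a quasi-continuous function vanishing $\xi$-almost everywhere must vanish quasi everywhere. Throughout I would rely on the standard features of the Dirichlet space $(\mathcal{H},\dual{\cdot}{\cdot}_{\mathcal{H}})$ recalled in the cited references (\cite{DENY,HARAUX,MIGNOT}), in particular the density in $(\mathcal{H},\norm{\cdot}_{\mathcal{H}})$ of a class of genuinely continuous functions of $\mathcal{H}$, together with the lattice (contraction) property that guarantees $\abs{w}\in\mathcal{H}$ with $\norm{\abs{w}}_{\mathcal{H}}\leq\norm{w}_{\mathcal{H}}$.

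The analytic cornerstone is a weak-type capacitary estimate: there exists $C>0$ such that, for every continuous $w\in\mathcal{H}$ and every $\lambda>0$,
\[
\mathrm{cap}\bigl(\left\{x\in\mathrm{X}\mid \abs{w(x)}\geq\lambda\right\}\bigr)\leq\frac{C}{\lambda^{2}}\norm{w}_{\mathcal{H}}^{2}.
\]
This follows directly from the variational definition of capacity, since $\abs{w}/\lambda$ is an admissible competitor in $\mathrm{C}_{\mathrm{B}}$ for $\mathrm{B}=\{\abs{w}\geq\lambda\}$ (it is $\geq 1$ on a neighborhood of $\mathrm{B}$ by continuity of $w$), so comparison with the minimal-norm element $\mathrm{proj}_{\mathrm{C}_{\mathrm{B}}}(0)$ yields the bound. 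For existence, given $v\in\mathcal{H}$ I would pick continuous $v_n\to v$ in $\mathcal{H}$ and, after extraction, arrange $\norm{v_{n+1}-v_n}_{\mathcal{H}}\leq 2^{-n}$. Applying the estimate to $w=v_{n+1}-v_n$ with $\lambda=2^{-n/2}$ gives $\mathrm{cap}(\{\abs{v_{n+1}-v_n}\geq 2^{-n/2}\})\leq C\,2^{-n}$, so by countable subadditivity the open sets $\omega_N:=\bigcup_{n\geq N}\{\abs{v_{n+1}-v_n}\geq 2^{-n/2}\}$ satisfy $\mathrm{cap}(\omega_N)\to 0$, and on $\mathrm{X}\setminus\omega_N$ the series $\sum_n (v_{n+1}-v_n)$ converges uniformly. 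Hence the continuous partial sums converge uniformly off each $\omega_N$ to a limit $\widetilde{v}$ that is continuous on $\mathrm{X}\setminus\omega_N$ for every $N$; since $\mathrm{cap}(\omega_N)\to 0$, this is exactly the definition of $\widetilde{v}$ being quasi-continuous. A further subsequence converges $\xi$-a.e. to $v$, so $\widetilde{v}=v$ $\xi$-a.e., i.e. $\widetilde{v}$ is a quasi-continuous representative of the class of $v$.

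For uniqueness, it suffices to show that any quasi-continuous $g\in\mathcal{H}$ with $g=0$ $\xi$-a.e. satisfies $g=0$ quasi everywhere; applying this to $g=\widetilde{v}_1-\widetilde{v}_2$ (a difference of two quasi-continuous representatives, hence quasi-continuous and zero $\xi$-a.e.) then forces $\widetilde{v}_1=\widetilde{v}_2$ q.e. Since $\mathcal{H}\subset\LL^2(\mathrm{X},\xi)$, the hypothesis $g=0$ $\xi$-a.e. means $g$ is the zero element of $\mathcal{H}$, so $\norm{g}_{\mathcal{H}}=0$. The maximal inequality above, extended from continuous to quasi-continuous functions by the quasi-uniform approximation of the previous step, then gives $\mathrm{cap}(\{\abs{g}\geq\lambda\})\leq C\lambda^{-2}\norm{g}_{\mathcal{H}}^{2}=0$ for every $\lambda>0$; taking $\lambda=1/k$ and using countable subadditivity yields $\mathrm{cap}(\{g\neq 0\})=\mathrm{cap}\bigl(\bigcup_{k}\{\abs{g}\geq 1/k\}\bigr)=0$, i.e. $g=0$ q.e.

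The main obstacle is the capacitary weak-type inequality and, more precisely, its propagation from continuous to merely quasi-continuous functions: one must check that the admissibility argument for $\mathrm{C}_{\mathrm{B}}$ survives the passage to the limit, which is where the neighborhood formulation of capacity (requiring $v\geq 1$ on an open neighborhood of $\mathrm{B}$) and the quasi-uniform convergence of the approximating sequence are used together. Everything else --- the extraction, the Borel--Cantelli summation, and the countable subadditivity of capacity --- is routine once this quantitative link between $\norm{\cdot}_{\mathcal{H}}$ and the capacity of super-level sets is in hand.
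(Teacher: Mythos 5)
The paper states this proposition without proof, citing \cite{HARAUX,HENROT,MIGNOT}, so your attempt can only be compared with the standard argument of those references. Your existence half is essentially that argument (capacitary Chebyshev estimate plus Borel--Cantelli extraction) and is correct after two small repairs. First, your justification of admissibility fails as stated: continuity of $w$ does not make $\abs{w}/\lambda\geq 1$ on a neighborhood of $\mathrm{B}=\{\abs{w}\geq\lambda\}$ (think of a point where $\abs{w}$ attains the value $\lambda$ as a strict local maximum). Second, your exceptional sets $\omega_N=\bigcup_{n\geq N}\{\abs{v_{n+1}-v_n}\geq 2^{-n/2}\}$ are unions of closed sets, so they need not be open, whereas the paper's definition of quasi-continuity requires a decreasing sequence of \emph{open} sets. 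Both slips are repaired simultaneously by working with strict superlevel sets $\{\abs{w}>\lambda\}$: these are open by continuity, each is a neighborhood of itself, so $\abs{w}/\lambda\in\mathrm{C}_{\{\abs{w}>\lambda\}}$ and the contraction property gives $\mathrm{cap}(\{\abs{w}>\lambda\})\leq\lambda^{-2}\norm{w}_{\mathcal{H}}^{2}$; the Borel--Cantelli step then runs as you wrote it, granted countable subadditivity of $\mathrm{cap}$ and the continuity of the embedding $\mathcal{H}\subset\LL^{2}(\mathrm{X},\xi)$ (needed both to extract a $\xi$-a.e.\ convergent subsequence and to know that capacity-null sets are $\xi$-null), which are standard facts for Dirichlet spaces.

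The uniqueness half, by contrast, has a genuine gap: it is circular. You need the weak-type estimate for an \emph{arbitrary} quasi-continuous representative $g$ of the zero class, and you propose to obtain it ``by the quasi-uniform approximation of the previous step''. But that step approximates a class of $\mathcal{H}$ by continuous functions converging quasi-uniformly to \emph{one particular} representative constructed there; for the zero class one may take $g_n\equiv 0$, whose quasi-uniform limit is $0$, and this says nothing about the given $g$. Identifying $g$ q.e.\ with that particular representative is exactly the uniqueness assertion being proved, so no passage to the limit is available --- the ``main obstacle'' you flag in your closing paragraph is precisely the point left unproved. The correct argument is direct and uses the neighborhood form of the capacity instead of approximation: fix $\lambda>0$ and $\eps>0$, and choose an open set $G$ with $\mathrm{cap}(G)<\eps$ such that $g$ restricted to $\mathrm{X}\setminus G$ is continuous; then $\{\abs{g}>\lambda\}\setminus G$ is relatively open in $\mathrm{X}\setminus G$, say equal to $V\setminus G$ with $V$ open, and it is $\xi$-null because $g=0$ $\xi$-a.e. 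Hence any $w\in\mathrm{C}_{G}$ (i.e.\ $w\geq 1$ $\xi$-a.e.\ on $G$, the open set $G$ being a neighborhood of itself) satisfies $w\geq 1$ $\xi$-a.e.\ on the open set $G\cup V\supset\{\abs{g}>\lambda\}$, so $w\in\mathrm{C}_{\{\abs{g}>\lambda\}}$ and therefore $\mathrm{cap}(\{\abs{g}>\lambda\})\leq\mathrm{cap}(G)<\eps$. Letting $\eps\to 0$ and taking the union over $\lambda=1/k$ (countable subadditivity again) gives $g=0$ q.e.; applied to $g=\widetilde{v}_1-\widetilde{v}_2$ this yields the uniqueness your proposal does not actually establish.
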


To conclude, let us give two examples of Dirichlet space (see~\cite{MIGNOT} for the first example and~\cite[Chapter 4]{SOKOZOL} for the second one).

\begin{myExa}
Let $\Omega$ is a nonempty bounded connected open subset of $\R^{d}$ with a Lipschitz continuous boundary. Then $\mathcal{H}:=\HH^1(\Omega,\R)$ endowed with its standard scalar product $\dual{\cdot}{\cdot}_{\HH^1(\Omega,\R)}$ is a Dirichlet space.
\end{myExa}

\begin{myExa}\label{DirichletSpace}
Let $\Omega$ be a nonempty bounded connected open subset of $\R^{d}$ with a Lipschitz continuous boundary $\Gamma:=\partial{\Omega}$. Assume that $\Gamma$ is given by the decomposition~$\Gamma=\Gamma_{1}\cup\Gamma_{2}$, where~$\Gamma_{1}$ and $\Gamma_{2}$ are two measurable disjoint subsets of $\Gamma$. Then,
$$
\mathcal{H}:=\left\{ v\cdot\nn\in\HH^{1/2}(\Gamma_2,\R) \mid v\in\HH^1(\Omega,\R^d) \text{ and } v=0 \text{ \textit{a.e.} on } \Gamma_1 \right\},
$$
is a Dirichlet space endowed with the scalar product defined in~\cite[Chapter 4, Eq. (4.192) p.208]{SOKOZOL}, where $\nn$ is the outward-pointing unit normal vector to $\Gamma$.
\end{myExa}

\subsection{Reminders on differential geometry}\label{rappelfunct}
Let $d\in\mathbb{N}^{*}$ be a positive integer, $\Omega$ be a nonempty bounded connected open subset of~$\R^{d}$ with a Lipschitz-boundary $\Gamma :=\partial{\Omega}$ and $\nn$ the outward-pointing unit normal vector to $\Gamma$.
\medskip

The next proposition, known as divergence formula, can be found in~\cite[Theorem 4.4.7 p.104]{ALLNUM}.

\begin{myProp}[Divergence formula]\label{div}
If $w\in \HH_{\mathrm{div}}(\Omega, \R^{d\times d})$,
then $w$ admits a normal trace, denoted by~$w\nn \in \HH^{-1/2}(\Gamma,\R^d)$, satisfying
$$
\displaystyle\int_{\Omega}\mathrm{div}(w)\cdot v+\int_{\Omega}w:\nabla v=\dual{w\nn}{v}_{\HH^{-1/2}(\Gamma,\R^d)\times \HH^{1/2}(\Gamma,\R^d)}, \qquad\forall v \in \HH^1(\Omega,\R^{d}).
$$
\end{myProp}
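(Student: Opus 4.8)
The plan is to construct the normal trace $w\nn$ directly by duality from the very identity to be proved, rather than through an approximation of $w$ by smooth matrix fields; for smooth $w$ and $v$ the identity is simply the classical divergence theorem applied to each row, which fixes the sign convention. First I would reduce to the scalar $\mathrm{H}(\mathrm{div})$ setting. Writing $w_i\in\LL^2(\Omega,\R^d)$ for the transpose of the $i$-th line of $w$ (so that $\mathrm{div}(w)_i=\mathrm{div}(w_i)$ and $w:\nabla v=\sum_i w_i\cdot\nabla v_i$), the asserted identity is precisely the sum over $i\in[[1,d]]$ of the scalar Green formulas
\[
\int_{\Omega}\mathrm{div}(w_i)\,v_i+\int_{\Omega}w_i\cdot\nabla v_i=\dual{w_i\cdot\nn}{v_i}_{\HH^{-1/2}(\Gamma,\R)\times\HH^{1/2}(\Gamma,\R)},
\]
so it suffices to treat a single vector field $w_i$ with $\mathrm{div}(w_i)\in\LL^2(\Omega,\R)$ and then set $(w\nn)_i:=w_i\cdot\nn$.

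For such a $w_i$, I would introduce the linear functional $\ell_i(v):=\int_{\Omega}\mathrm{div}(w_i)v+\int_{\Omega}w_i\cdot\nabla v$ on $\HH^1(\Omega,\R)$; by Cauchy--Schwarz it is bounded, with $|\ell_i(v)|\leq(\norm{\mathrm{div}(w_i)}_{\LL^2}+\norm{w_i}_{\LL^2})\norm{v}_{\HH^1}$. The crucial step is to show that $\ell_i$ descends through the trace operator, which is what permits its representation by a boundary distribution. I would first check that $\ell_i$ vanishes on $\HH^1_0(\Omega,\R)$: for $v\in\MD(\Omega,\R)$ the identity $\int_{\Omega}\mathrm{div}(w_i)v=-\int_{\Omega}w_i\cdot\nabla v$ is nothing but the definition of $\mathrm{div}(w_i)$ as a distribution, and it extends to all of $\HH^1_0(\Omega,\R)$ by density of $\MD(\Omega,\R)$ and continuity of $\ell_i$. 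Since the Lipschitz regularity of $\Gamma$ guarantees that the trace map $\gamma:\HH^1(\Omega,\R)\to\HH^{1/2}(\Gamma,\R)$ is bounded and surjective with a bounded linear right inverse $R$, and since $\ker\gamma=\HH^1_0(\Omega,\R)$, the functional $\ell_i$ factors uniquely as $\ell_i=\widetilde{\ell_i}\circ\gamma$.

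It then remains to verify that $\widetilde{\ell_i}$ lies in $\HH^{-1/2}(\Gamma,\R)$: for $g\in\HH^{1/2}(\Gamma,\R)$ one has $\widetilde{\ell_i}(g)=\ell_i(Rg)$, whence $|\widetilde{\ell_i}(g)|\leq(\norm{\mathrm{div}(w_i)}_{\LL^2}+\norm{w_i}_{\LL^2})\norm{R}\,\norm{g}_{\HH^{1/2}}$. Defining $w_i\cdot\nn:=\widetilde{\ell_i}$ yields the scalar Green formula by construction, and summing over $i$ produces $w\nn\in\HH^{-1/2}(\Gamma,\R^d)$ with the stated identity. The main obstacle is not any individual estimate but the trace theory invoked in the second paragraph --- surjectivity of $\gamma$ together with a bounded right inverse on a merely Lipschitz domain --- which is exactly where the hypothesis on $\Gamma$ is used; being classical, it can be quoted from standard references, so the whole argument is essentially a careful bookkeeping of how the distributional divergence and the trace lifting combine.
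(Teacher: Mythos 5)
Your proof is correct. Note that the paper itself gives no proof of this proposition: it is quoted directly from the reference~\cite[Theorem 4.4.7 p.104]{ALLNUM}. Your argument --- row-wise reduction to the scalar $\HH(\mathrm{div})$ setting, the bounded functional $\ell_i$ vanishing on $\HH^1_0(\Omega,\R)$ by density of $\MD(\Omega,\R)$, and factorization through the surjective trace operator $\gamma$ with bounded right inverse on a Lipschitz domain --- is exactly the standard duality construction of the normal trace used to establish that cited result, so your proposal matches the (externally sourced) proof rather than deviating from it.
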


The following propositions will be useful and their proofs can be found in~\cite{HENROT}.

\begin{myProp}\label{intbord}
Assume that~$\Gamma$ is of class $\mathcal{C}^2$ and let $\theta \in \mathcal{C}^{1}(\R^{d},\R^{d})$. It holds that
\begin{equation*}
    \int_{\Gamma}(\theta\cdot\nabla{v}+v\mathrm{div}_{\tau}(\theta))=\int_{\Gamma}\theta\cdot\nn(\partial_{\nn}v+Hv), \qquad\forall v \in \mathrm{W}^{2,1}(\Omega,\R),
\end{equation*}
where $\mathrm{div}_{\tau}(\theta):=\mathrm{div}(\theta)-(\nabla{ \theta}\nn \cdot \nn) \in \LL^\infty(\Gamma)$ is the {tangential divergence} of $\theta$,~$\partial_{\nn} v := \nabla v \cdot \nn \in \LL^1(\Gamma,\R)$ stands for the normal derivative of~$v$, and $H$ stands for the \textit{mean curvature} of $\Gamma$.
\end{myProp}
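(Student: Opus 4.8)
The plan is to deduce the identity from the tangential divergence theorem on the closed hypersurface $\Gamma=\partial\Omega$, after first disposing of the regularity issue by density. Since $\mathcal{C}^\infty(\overline{\Omega},\R)$ is dense in $\mathrm{W}^{2,1}(\Omega,\R)$, and since the trace maps $v\mapsto v|_{\Gamma}$ and $v\mapsto (\nabla v)|_{\Gamma}$ are continuous from $\mathrm{W}^{2,1}(\Omega,\R)$ into $\LL^1(\Gamma,\R)$ and $\LL^1(\Gamma,\R^d)$ respectively (the latter because $\nabla v\in\mathrm{W}^{1,1}(\Omega,\R^d)$ admits an $\LL^1$ trace), both sides of the claimed equality depend continuously on $v$ for the $\mathrm{W}^{2,1}(\Omega,\R)$ topology. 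It therefore suffices to prove the formula for $v\in\mathcal{C}^\infty(\overline{\Omega},\R)$ and to pass to the limit.

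The main tool is the surface divergence theorem: for every $\mathcal{C}^1$ vector field $w$ on $\Gamma$ one has $\int_\Gamma \mathrm{div}_\tau(w)=\int_\Gamma H\,(w\cdot\nn)$, where $H=\mathrm{div}_\tau(\nn)$ is the mean curvature. I would recall its derivation: writing $w=w_\tau+(w\cdot\nn)\nn$ and using that $\nabla_\tau(w\cdot\nn)$ is tangential, one finds $\mathrm{div}_\tau(w)=\mathrm{div}_\tau(w_\tau)+H\,(w\cdot\nn)$; since $\Gamma$ is a compact $\mathcal{C}^2$ manifold without boundary, the integral over $\Gamma$ of the tangential divergence of the tangential field $w_\tau$ vanishes, which yields the stated identity.

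Applying this to $w=v\theta$ and invoking the product rule $\mathrm{div}_\tau(v\theta)=\nabla_\tau v\cdot\theta+v\,\mathrm{div}_\tau(\theta)$ (which follows from the corresponding rule for the full divergence once the normal-normal term $\nn\cdot(\nabla(v\theta)\nn)$ is subtracted), I obtain $\int_\Gamma(\nabla_\tau v\cdot\theta+v\,\mathrm{div}_\tau(\theta))=\int_\Gamma H\,v\,(\theta\cdot\nn)$. It then remains to re-express the tangential gradient: from $\nabla v=\nabla_\tau v+(\partial_\nn v)\nn$ one gets $\nabla_\tau v\cdot\theta=\theta\cdot\nabla v-(\partial_\nn v)(\theta\cdot\nn)$, and substituting this into the previous equality and moving the term $(\partial_\nn v)(\theta\cdot\nn)$ to the right-hand side produces precisely $\int_\Gamma(\theta\cdot\nabla v+v\,\mathrm{div}_\tau(\theta))=\int_\Gamma(\theta\cdot\nn)(\partial_\nn v+Hv)$.

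The main obstacle is the rigorous justification of the surface divergence theorem, namely the vanishing of $\int_\Gamma\mathrm{div}_\tau(w_\tau)$ for a tangential $\mathcal{C}^1$ field on the boundaryless surface $\Gamma$; this is where the $\mathcal{C}^2$ regularity of $\Gamma$ is essential, both to give meaning to $H$ and to the intrinsic divergence. I would handle it through a partition of unity subordinate to local $\mathcal{C}^2$ parametrizations of $\Gamma$, reducing in each chart to the classical divergence theorem and checking that the contributions of the chart boundaries cancel across overlapping charts. By contrast, the density and trace argument of the first step is routine, since every term appearing on $\Gamma$ is at most first order in $v$ and is therefore controlled by the $\mathrm{W}^{2,1}(\Omega,\R)$ norm via the trace theorem.
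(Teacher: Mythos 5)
Your proof is correct, but note that the paper itself contains no proof of Proposition~\ref{intbord}: it simply defers to the reference~\cite{HENROT}, so the comparison is with the classical argument found there. Your assembly of the identity is the standard one and every step checks out: the reduction to $v\in\mathcal{C}^{\infty}(\overline{\Omega},\R)$ by density in $\mathrm{W}^{2,1}(\Omega,\R)$ and $\LL^{1}(\Gamma)$-continuity of the traces of $v$ and $\nabla v$ (legitimate since the coefficients $\theta$, $\mathrm{div}_{\tau}(\theta)$, $H$ and $\nn$ are all bounded on the compact $\mathcal{C}^2$ boundary), the splitting $\mathrm{div}_{\tau}(w)=\mathrm{div}_{\tau}(w_{\tau})+H\,(w\cdot\nn)$, the product rule $\mathrm{div}_{\tau}(v\theta)=\nabla_{\tau}v\cdot\theta+v\,\mathrm{div}_{\tau}(\theta)$, and the substitution $\nabla_{\tau}v\cdot\theta=\theta\cdot\nabla v-(\partial_{\nn}v)(\theta\cdot\nn)$. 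Where you genuinely diverge from the cited source is in the technical core, namely the vanishing of $\int_{\Gamma}\mathrm{div}_{\tau}(w_{\tau})$ for a tangential $\mathcal{C}^1$ field on the closed surface: Henrot--Pierre derive the surface Stokes formula extrinsically, using the oriented distance function to extend $\nn$ and applying the ordinary divergence theorem in a tubular shell between parallel surfaces, whereas you invoke the intrinsic divergence theorem on a compact manifold without boundary via charts. Your route is valid and arguably more self-contained geometrically, but two points in it deserve care. First, the phrase about contributions of chart boundaries ``cancelling across overlapping charts'' is not how the argument should be run: with a partition of unity $\{\chi_i\}$ subordinate to the charts, each field $\chi_i w_{\tau}$ is compactly supported inside a single chart, so its coordinate divergence $\tfrac{1}{\sqrt{g}}\partial_j\bigl(\sqrt{g}\,X^j\bigr)$ integrates to zero on its own; no cross-chart cancellation is needed, and an argument genuinely based on matching chart boundaries would be considerably harder to make rigorous. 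Second, the chart argument silently requires identifying the extrinsic definition $\mathrm{div}_{\tau}(w_{\tau})=\mathrm{div}(\tilde{w}_{\tau})-(\nabla{\tilde{w}_{\tau}}\,\nn\cdot\nn)$ (for an ambient extension $\tilde{w}_{\tau}$) with the intrinsic Riemannian divergence in local coordinates; this identification, together with independence of the extension, is exactly the nontrivial content your sketch leaves implicit, and it is what the distance-function approach of the reference is designed to bypass. Neither point is a fatal gap, but both would need to be written out for a complete proof.
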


\begin{myProp}\label{beltrami}
Assume that $\Gamma$ is of class $\mathcal{C}^2$ and let $w\in\mathrm{H}^{2}(\Omega,\R^{d\times d})$. It holds that 
\begin{equation*}
    \mathrm{div}(w)=\mathrm{div}_{\tau}\left(w_{\tau}\right)+H w\nn+\left(\partial_{\nn}w\right)\nn \qquad \text{\textit{a.e.}\ on } \Gamma,
\end{equation*}
where $\mathrm{div}_{\tau}\left(w_{\tau}\right)\in\LL^2(\Gamma,\R^d)$ is the vector whose the $i$-th component is defined by $\left(\mathrm{div}_{\tau}\left(w_{\tau}\right)\right)_i:=\mathrm{div}_{\tau}((w_{i})_{\tau})\in\LL^2(\Gamma,\R)$, where $(w_{i})_{\tau}:=w_i-(w_i\cdot\nn) \nn\in\LL^2(\Gamma,\R^d)$ and~$w_i\in\HH^1(\Omega,\R^{d})$ is the transpose of the~$i$-th line of~$w$, and where~$\partial_{\nn}w\in\LL^2(\Gamma,\R^{d\times d})$ is the matrix whose the~$i$-th line is the transpose of the vector $\partial_{\nn}w_{i}:=(\nabla{w_{i}})\nn\in\LL^2(\Gamma,\R^d)$, for all~$i\in[[1,d]]$. Moreover it holds that
\begin{equation*}
\int_{\Gamma}v\cdot\mathrm{div}_{\tau}\left(w_{\tau}\right)=-\int_{\Gamma}w:\nabla_{\tau}v, \qquad \forall v \in \HH^{2}(\Omega,\R^d),
\end{equation*}
where $\nabla_{\tau}v$ is the matrix whose the $i$-th line is the transpose of the \textit{tangential gradient} $\nabla_{\tau}v_i:=\nabla{v_i}-(\partial_{\nn}v_i)\nn \in \HH^{1/2}(\Gamma,\R^{d})$, for all $i\in[[1,d]]$.
\end{myProp}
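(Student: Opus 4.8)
The plan is to prove the two assertions separately, reducing each matrix statement to a scalar (row-wise) one, since every operator involved acts line by line. Writing $g:=w_i\in\HH^2(\Omega,\R^d)$ for the transpose of the $i$-th row of $w$, the $i$-th component of $\mathrm{div}(w)$ is $\mathrm{div}(g)$, that of $(\partial_{\nn}w)\nn$ is $\partial_{\nn}g\cdot\nn$, that of $Hw\nn$ is $H(g\cdot\nn)$, and that of $\mathrm{div}_{\tau}(w_{\tau})$ is $\mathrm{div}_{\tau}(g_{\tau})$. Hence it suffices to establish, for a single field $g\in\HH^2(\Omega,\R^d)$, the pointwise identity $\mathrm{div}(g)=\mathrm{div}_{\tau}(g_{\tau})+H(g\cdot\nn)+\partial_{\nn}g\cdot\nn$ a.e.\ on $\Gamma$, together with the scalar Green formula treated below.

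For the first identity I would start from the very definition of the tangential divergence, which reads $\mathrm{div}(g)=\mathrm{div}_{\tau}(g)+\partial_{\nn}g\cdot\nn$ a.e.\ on $\Gamma$, so that it only remains to show $\mathrm{div}_{\tau}(g)=\mathrm{div}_{\tau}(g_{\tau})+H(g\cdot\nn)$. To this end I would decompose $g=g_{\tau}+(g\cdot\nn)\nn$ and use the linearity of $\mathrm{div}_{\tau}$ together with the Leibniz rule $\mathrm{div}_{\tau}((g\cdot\nn)\nn)=\nabla_{\tau}(g\cdot\nn)\cdot\nn+(g\cdot\nn)\,\mathrm{div}_{\tau}(\nn)$. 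Since $\nabla_{\tau}(g\cdot\nn)$ is by construction tangential, the first term vanishes, and the classical geometric identity $\mathrm{div}_{\tau}(\nn)=H$ (with the sign convention fixed by Proposition~\ref{intbord}) produces the missing curvature term. Assembling these equalities yields the claimed decomposition; this is where the $\mathcal{C}^2$ regularity of $\Gamma$ and $g\in\HH^2(\Omega,\R^d)$, so that $g_{|\Gamma}\in\HH^{3/2}(\Gamma,\R^d)$, are used to make $\mathrm{div}_{\tau}(g_{\tau})$ and $H$ meaningful in $\LL^2(\Gamma,\R)$.

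For the integration-by-parts formula I would again argue row by row and, using that $\nabla_{\tau}v_i$ is tangential so that $w_i\cdot\nabla_{\tau}v_i=(w_i)_{\tau}\cdot\nabla_{\tau}v_i$, reduce the claim to the scalar tangential Green formula $\int_{\Gamma}v_i\,\mathrm{div}_{\tau}((w_i)_{\tau})=-\int_{\Gamma}(w_i)_{\tau}\cdot\nabla_{\tau}v_i$ for the tangential field $\phi:=(w_i)_{\tau}$. I would derive this from Proposition~\ref{intbord}: splitting $\theta\cdot\nabla v=\theta\cdot\nabla_{\tau}v+(\theta\cdot\nn)\partial_{\nn}v$ and cancelling the common $(\theta\cdot\nn)\partial_{\nn}v$ terms converts that proposition into the general surface Green identity $\int_{\Gamma}v\,\mathrm{div}_{\tau}(\theta)=-\int_{\Gamma}\theta\cdot\nabla_{\tau}v+\int_{\Gamma}(\theta\cdot\nn)Hv$; specialising to a tangential $\theta$ makes the curvature integral disappear, leaving exactly the desired formula. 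Summing over $i\in[[1,d]]$ and recognising $w:\nabla_{\tau}v=\sum_{i}w_i\cdot\nabla_{\tau}v_i$ then gives the matrix statement.

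The main obstacle is the regularity/density step in this last part: Proposition~\ref{intbord} is stated for $\theta\in\mathcal{C}^1(\R^d,\R^d)$, whereas $\phi=(w_i)_{\tau}$ is only a trace field of limited smoothness and is tangential merely on $\Gamma$. The cleanest remedy is to observe that both sides of the target formula depend on $\theta$ only through its tangential trace on $\Gamma$ and the associated surface derivatives, so that it is enough to prove the identity for tangential $\theta$; one then approximates $\phi$ in the relevant surface Sobolev norm by tangential $\mathcal{C}^1$ fields, exploiting $w\in\HH^2(\Omega,\R^{d\times d})$ to control $\mathrm{div}_{\tau}(\phi)$ and $\nabla_{\tau}v$ in $\LL^2(\Gamma,\R^d)$, and passes to the limit. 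Checking that the approximation preserves tangentiality and the continuity of each term is the only genuinely delicate point, the remaining computations being routine applications of the tangential calculus identities.
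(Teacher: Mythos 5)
The paper contains no proof of Proposition~\ref{beltrami} to compare against: it is stated as a reminder in the appendix and its proof is delegated to the reference~\cite{HENROT}. Your argument is, in substance, the standard tangential-calculus proof one finds there, and it is correct. The row-wise reduction is legitimate since every operator in the statement acts line by line; the pointwise identity follows, as you say, from $\mathrm{div}(g)=\mathrm{div}_{\tau}(g)+(\nabla g\,\nn)\cdot\nn$ (which is exactly the paper's definition of $\mathrm{div}_{\tau}$ in Proposition~\ref{intbord}), linearity, the Leibniz rule, $\nabla_{\tau}(g\cdot\nn)\cdot\nn=0$, and $\mathrm{div}_{\tau}(\nn)=H$; note that this last identity need not be invoked as external geometry, since it follows from Proposition~\ref{intbord} itself by taking $\theta$ to be a $\mathcal{C}^1$ unitary extension of $\nn$ (available because $\Gamma$ is $\mathcal{C}^2$): then $\theta\cdot\nabla v=\partial_{\nn}v$ and $\theta\cdot\nn=1$ on $\Gamma$, leaving $\int_{\Gamma}v\,\mathrm{div}_{\tau}(\nn)=\int_{\Gamma}Hv$ for all $v$, hence $\mathrm{div}_{\tau}(\nn)=H$ a.e. Your derivation of the surface Green identity $\int_{\Gamma}v\,\mathrm{div}_{\tau}(\theta)=-\int_{\Gamma}\theta\cdot\nabla_{\tau}v+\int_{\Gamma}(\theta\cdot\nn)Hv$ from Proposition~\ref{intbord}, by splitting $\nabla v$ and cancelling the $(\theta\cdot\nn)\partial_{\nn}v$ terms, is also correct. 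The only place where your plan is heavier than necessary is the final density step: approximating $(w_i)_{\tau}$ by \emph{tangential} $\mathcal{C}^1$ fields, extending them to $\R^d$, and checking that tangentiality and all terms survive the limit can be done, but it can be bypassed entirely. Apply the general surface Green identity to (restrictions of $\mathcal{C}^1(\R^d)$ approximations of) the full row $w_i$ rather than to its tangential part — density of such restrictions in $\HH^2(\Omega)$ and continuity of the trace $\HH^2(\Omega)\to\HH^{1}(\Gamma)$ are standard and involve no tangentiality constraint — then substitute $\mathrm{div}_{\tau}(w_i)=\mathrm{div}_{\tau}((w_i)_{\tau})+H(w_i\cdot\nn)$ from the first part of the proposition: the curvature integrals cancel, and since $\nabla_{\tau}v_i$ is tangential one has $w_i\cdot\nabla_{\tau}v_i=(w_i)_{\tau}\cdot\nabla_{\tau}v_i$, which after summation over $i$ is exactly the asserted formula.
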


\bibliographystyle{abbrv}
\bibliography{biblio}

\end{document}